\newcounter{dummy} \numberwithin{dummy}{section}
\newtheorem{theorem}[dummy]{Theorem}
\newtheorem{corollary}[dummy]{Corollary}
\newtheorem{lemma}[dummy]{Lemma}
\newtheorem{definition}[dummy]{Definition}
\newtheorem{proposition}[dummy]{Proposition}
\theoremstyle{remark}
\newtheorem{remark}[dummy]{Remark}
\newtheorem{example}[dummy]{Example}
\DeclareMathOperator{\id}{id}
\DeclareMathOperator{\pr}{pr}
\DeclareMathOperator{\rank}{rank}
\DeclareMathOperator{\spn}{span}
\DeclareMathOperator{\sgn}{sgn}
\DeclareMathOperator{\Ad}{Ad}
\DeclareMathOperator{\SO}{SO}
\DeclareMathOperator{\Ort}{O}
\DeclareMathOperator{\so}{\mathfrak{so}}
\DeclareMathOperator{\Isom}{Isom}
\DeclareMathOperator{\Hol}{Hol}
\DeclareMathOperator{\nil}{\mathfrak{nil}}
\DeclareMathOperator{\Nil}{Nil}
\numberwithin{equation}{section}
\title[On $\mathrm{G}_2$ and Model Spaces of Step and Rank Three]{On $\mathrm{G}_2$ and Sub-Riemannian Model Spaces of Step and Rank Three}
\author{Eirik Berge and Erlend Grong}
\date{}
\address{Department of Mathematical Sciences, Norwegian University of Science and Technology, 7491 Trondheim, Norway.}
\email{eirik.berge@ntnu.no}
\address{University of Bergen, Department of Mathematics, P. O. Box 7803,
5020 Bergen, Norway.}
\email{erlend.grong@uib.no}
\thanks{The second author is supported by the Research Council of Norway (project number 249980/F20).
 The authors were partially supported by the joint NFR-DAAD project 267630/F10.
Results are partially based on the first author's Master Thesis at the University of Bergen, Norway.}
\subjclass[2010]{53C17}
\keywords{Sub-Riemannian geometries, model spaces, isometries, $\mathrm{G}_2$}
\begin{document}

\begin{abstract}
We give the complete classification of all sub-Riemannian model spaces with both step and rank three. They will be divided into three families based on their nilpotentization. Each family will depend on a different number of parameters, making the result crucially different from the known case of step two model spaces. In particular, there are no nontrivial sub-Riemannian model spaces of step and rank three with free nilpotentization. We also realize both the compact real form $\mathfrak{g}_2^c$ and the split real form $\mathfrak{g}_2^s$ of the exceptional Lie algebra $\mathfrak{g}_2$ as isometry algebras of different model spaces.
\end{abstract}

\maketitle

\section{Introduction}

The development of Riemannian geometry has been highly influenced by certain spaces with maximal symmetry called \textit{model spaces}. Their ubiquity presents itself throughout differential geometry from the classical Gaussian map for surfaces to comparison theorems based on volume, the Laplacian, or Jacobi fields \cite{Peter_Petersen}. Following in the footsteps of Klein's Erlangen program, model spaces fit with the approach of investigating the symmetries of a geometric object to understand the object itself. In the Riemannian setting, work by Wilhelm Killing and Heinz Hopf among others resulted in the complete classification of the Riemannian model spaces in the celebrated Killing-Hopf Theorem. The theorem states that the only model spaces in Riemannian geometry are the spheres, the hyperbolic spaces, and the Euclidean spaces with their standard structures. This result is not only influential but also remarkable due to the fact that all the model spaces were already known and well examined prior to the classification. \par
In recent years sub-Riemannian geometry has emerged as an active field of research with ties to optimal control theory, Hamiltonian mechanics, geometric measure theory, and harmonic analysis. Nevertheless, new results in this subject are often only derived for special classes such as Carnot groups and contact structures. This is not due to a lack of ability, but rather that the absence of a canonical connection as present in Riemannian geometry has complicated issues. Thus it is of interest to enlarge the concept of model spaces to the sub-Riemannian setting to establish reference spaces. \par
\emph{A sub-Riemannian model space} as defined in \cite{sub_Riemannian_Model_Spaces} is a simply connected and bracket generating sub-Riemannian manifold $(M,H,g)$ satisfying the following \textit{integrability condition}: For any points $x,y \in M$ and any linear isometry $q:H_x \to H_y$ there exists a smooth isometry $\varphi:M \to M$ such that $d\varphi|_{H_x} = q$. These spaces have a canonical partial connection on $H$ whose holonomy is either trivial or isomorphic to $\SO(\rank (H))$. All model spaces of with $H + [H, H] = TM$ were classified in \cite{sub_Riemannian_Model_Spaces}, as well as the case of step three and $\rank (H) =2$ model spaces. In all these cases there is only one possible nilpotentization, namely the free nilpotent Lie group of appropriate step and rank. The main goal of the paper is to obtain some insight into model spaces with higher steps by classifying all sub-Riemannian model spaces of step and rank three. Our main results can be summarized as follows. 

\begin{theorem}
\label{introductory_theorem}
The sub-Riemannian model spaces with step and rank three have dimension $9$, $11$, or $14$. 
\begin{enumerate}[\rm (a)]
\item In dimension 9, there is a two-parameter family $\mathsf{C}_{3,3}(a_1, a_2)$, $a_1, a_2 \in \mathbb{R}$ of model spaces, behaving under scaling of the metric like \[\lambda \mathsf{C}_{3,3}(a_1, a_2) = \mathsf{C}_{3,3}(a_1/\lambda^2, a_2/\lambda^4), \quad \lambda > 0.\] These spaces can be made into Lie groups with left invariant sub-Riemannian structures such that the orientation preserving isometries are given by compositions of left translations and Lie group automorphisms if and only if $a_2 \geq 0$ or if $a_2 < 0$ and $a_1 \leq 2\sqrt{-a_2}$. Moreover, they are compact precisely when $a_2 < 0$ and $2\sqrt{-a_2} < a_1$.
\item In dimension 11, there are three model spaces up to scaling. One of these model spaces is a Carnot group, while the two other model spaces have respectively the compact real form $\mathfrak{g}_2^c$ and the split real form $\mathfrak{g}_2^s$ of the exceptional Lie algebra~$\mathfrak{g}_2$ as their isometry algebras.
\item In dimension 14, the only model space is the free nilpotent Lie group $\mathsf{F}[3,3]$ of step and rank three.
\end{enumerate}
\end{theorem}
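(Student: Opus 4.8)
The plan is to split the classification into two stages: first determine the possible nilpotentizations purely representation-theoretically, and then, over each fixed nilpotentization, solve for the admissible curvatures. Since the canonical partial connection has holonomy $\SO(\rank H) = \SO(3)$, the nilpotentization $\mathfrak{n} = \mathfrak{n}_1 \oplus \mathfrak{n}_2 \oplus \mathfrak{n}_3$ must carry a grading-preserving $\so(3)$-action with $\mathfrak{n}_1$ the standard $3$-dimensional representation. Writing $V_\ell$ for the irreducible $\so(3)$-representation of dimension $2\ell+1$, the relations $\mathfrak{n}_2 = [\mathfrak{n}_1,\mathfrak{n}_1]$ and $\mathfrak{n}_3 = [\mathfrak{n}_1,\mathfrak{n}_2]$ exhibit $\mathfrak{n}_2$ as an equivariant quotient of $\Lambda^2 V_1 \cong V_1$ and $\mathfrak{n}_3$ as an equivariant quotient of the degree-three part of the free nilpotent Lie algebra on $V_1$. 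As $\Lambda^2 V_1$ is irreducible and step three forces $\mathfrak{n}_2 \neq 0$, necessarily $\mathfrak{n}_2 \cong V_1$. Decomposing the degree-three component as $V_1 \oplus V_2$ (the Jacobi relation removes the factor $\Lambda^3 V_1 \cong V_0$ from $V_1 \otimes \Lambda^2 V_1$), the $\so(3)$-invariant quotients of $\mathfrak{n}_3$ are exactly $V_1 \oplus V_2$, $V_2$, and $V_1$. This produces the three admissible nilpotentizations and the dimensions $14$, $11$, and $9$.

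Next I would recover each model space from its isometry algebra. By the maximal-symmetry property together with the structure theory of \cite{sub_Riemannian_Model_Spaces}, the full isometry algebra is a filtered Lie algebra $\mathfrak{g} = \so(3) \oplus \mathfrak{n}_1 \oplus \mathfrak{n}_2 \oplus \mathfrak{n}_3$ whose associated graded is the semidirect product $\so(3) \ltimes \mathfrak{n}$, with $\so(3)$ acting as the isotropy. The bracket on $\mathfrak{g}$ is then the sum of this graded symbol and a family of curvature corrections that raise the grading degree; constancy of the curvature (the model-space condition) forces every such correction to be $\so(3)$-equivariant. By Schur's lemma each correction reduces to a finite list of scalars — one for each copy of the trivial representation in the relevant $\Hom$-spaces — so the classification becomes the purely algebraic problem of solving the Jacobi identity for these scalars up to the residual $\SO(3)$-symmetry and scaling.

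I would then carry out this solution case by case. For the free nilpotentization (dimension $14$) I expect the equivariant curvature maps into the lower strata to be obstructed by Jacobi unless they all vanish, yielding rigidity: the only model space is $\mathsf{F}[3,3]$. For $\mathfrak{n}_3 \cong V_2$ (dimension $11$) the isometry algebra has dimension $3+3+3+5 = 14 = \dim \mathfrak{g}_2$, and I would match the nonzero curvature solutions with the decompositions of the two real forms $\mathfrak{g}_2^c$ and $\mathfrak{g}_2^s$ under an $\su(2)$-subalgebra, the vanishing-curvature solution giving the remaining Carnot group; this dimension coincidence is precisely why $\mathfrak{g}_2$ appears. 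For $\mathfrak{n}_3 \cong V_1$ (dimension $9$) the Jacobi analysis should leave two free scalars $a_1, a_2$ carrying scaling weights $2$ and $4$, reproducing the stated law $\lambda \mathsf{C}_{3,3}(a_1,a_2) = \mathsf{C}_{3,3}(a_1/\lambda^2, a_2/\lambda^4)$.

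Finally I would integrate each solution to a simply connected manifold, verify the integrability condition, and read off the geometric properties: the Lie-group criterion (existence of an ideal complementary to $\so(3)$ turning $\mathfrak{g}$ into a semidirect product, giving the inequalities on $a_1,a_2$), compactness (from the sign of the induced curvature, equivalently of the Killing form), and the explicit isometry algebras. The hard part will be the Jacobi analysis of the third paragraph: showing that the equivariant curvature must vanish in the free case, and — more delicately — identifying the two nonzero solutions in dimension $11$ with the specific real forms $\mathfrak{g}_2^c$ and $\mathfrak{g}_2^s$ rather than merely counting dimensions. Pinning down exactly when the dimension-$9$ family admits a compatible Lie group structure, and when it is compact, is a further subtle point that I expect to hinge on the same equivariant curvature tensor.
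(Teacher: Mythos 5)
Your overall architecture --- first pin down the admissible nilpotentizations as equivariant quotients of $\mathsf{f}[3,3]$, then solve an equivariant Jacobi problem for the filtered isometry algebra over each fixed nilpotentization --- is exactly the paper's strategy (Propositions \ref{Nilpotentization_Model_Space} and \ref{strategy_carnot_groups}, then Lemma \ref{A_33_Classification} and Theorem \ref{nilpotent_classification}), and your first stage is correct: $\mathfrak{f}_3 \cong V_1 \oplus V_2$ after the Jacobi identity removes the trivial summand, giving the three quotients and the dimensions $9$, $11$, $14$. The decisive gap is in stage two: you impose only $\so(3)$-equivariance on the curvature corrections. A model space integrates \emph{every} linear isometry of $H_x$, so the isotropy is the full $\Ort(3)$, and the paper exploits this in two ways that $\so(3)$ cannot see. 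First, its Lemma \ref{All_The_Invariant_Maps} counts $\Ort(3)$-equivariant maps, where the determinant twist distinguishes $\mathbb{R}^3$ from $\bar{\mathbb{R}}^3$ and kills pairings that survive for $\so(3)$ (e.g.\ the cross product viewed as a map $\mathbb{R}^3 \otimes \mathbb{R}^3 \to \mathbb{R}^3$ with untwisted target). Second, the isometry $\sigma$ with $d\sigma|_{H_x} = -\id$ lies over $-I \in \Ort(3) \setminus \SO(3)$, and the eigenspace decomposition $\mathfrak{g} = \mathfrak{g}^+ \oplus \mathfrak{g}^-$ relative to $\Ad(\sigma)$ is what structures the whole bracket ansatz in the proofs of Lemma \ref{A_33_Classification} and Theorem \ref{nilpotent_classification}. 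With $\so(3)$ alone your Schur count yields strictly more scalars, and the Jacobi identity does \emph{not} kill them: Remark \ref{remark:G2SR} exhibits $\SO(3)$-invariant but not $\Ort(3)$-invariant sub-Riemannian structures on $\mathrm{G}_2^c$ and $\mathrm{G}_2^s$ whose isometry groups have maximal dimension yet which are not model spaces (Example \ref{maximal_dimension_example} is the analogous phenomenon in step two). So your expected rigidity in the free case is false at the $\so(3)$ level --- these $\mathrm{G}_2$ structures are precisely nonzero $\so(3)$-equivariant solutions --- and the orientation-reversing isometries must be built into the ansatz from the start.

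A second concrete error is your Lie-group criterion for the dimension-$9$ family: the existence of an \emph{ideal} complementary to $\so(3)$ making $\mathfrak{g}$ a semidirect product. For $a_2 > 0$ the isometry algebra of $\mathsf{C}_{3,3}(a_1,a_2)$ is $\so(4) \oplus \so(3,1)$ (Lemma \ref{C33Lemma1}), which is semisimple and has no nine-dimensional ideal, yet the space does carry an invariant Lie group structure (Theorem \ref{th:AllC33}, Table 1). The correct mechanism is special to rank three: by Lemma \ref{lemma:CrossProduct} there is a one-parameter family of invariant connections $\nabla^c = \nabla + c\, X \times Y$, and the group structure exists if and only if some $\nabla^c$ has trivial holonomy, i.e.\ the deformed subspace $\mathfrak{p}_c$ generates a \emph{subalgebra} (not an ideal) transverse to $\mathfrak{k}$. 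Lemma \ref{lemma:HolCZero} reduces this to real solvability of $c^4 = a_1 c^2 + a_2$ with $c^2 \geq 0$, which for $a_2 < 0$ gives the boundary $a_1 \geq 2\sqrt{-a_2}$ as in Theorem \ref{th:AllC33}(a) (the inequality in the introductory statement is stated with the opposite orientation; Table 1 settles the direction, since the compact case $\mathfrak{m} = \so(3) \oplus \so(4)$ occurs for $2\sqrt{-a_2} < a_1$ and is a Lie group). Without the cross-product family of connections your criterion cannot produce this ``if and only if''. Your remaining suggestions are sound in spirit: compactness is indeed read off the explicit isometry algebras (Tables 1 and 2), and the identification of the two nonzero dimension-$11$ solutions with $\mathfrak{g}_2^c$ and $\mathfrak{g}_2^s$ requires, as you anticipate, explicit isomorphisms rather than a dimension count --- the paper writes these down using the matrix models of the two real forms, and separately verifies that the $\mathrm{G}_2$ quotients satisfy the model-space condition by checking $\Ort(3)$-invariance of the brackets.
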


In particular, all sub-Riemannian model spaces of step and rank three with the same infinitesimal structure form respectively a two-parameter family, a one-parameter family, or are represented by a single space. Furthermore, we give the first realization of the exceptional Lie group $\mathrm{G}_2$ not only as an algebra of symmetries, see e.g. \cite{BoMo09} and references therein, but as an isometry group. This result also shows that the model spaces of step and rank three have unique structures that can not be observed in any other case. \par
The structure of the paper is as follows: In Section \ref{sec:Preliminaries} we introduce notation and briefly review standard material from sub-Riemannian geometry. The results about sub-Riemannian model spaces that we will need are given in Section \ref{Sec: SubGeo}, mostly taken from \cite{sub_Riemannian_Model_Spaces}. With the exception of Example \ref{maximal_dimension_example}, everything in these sections is previously known. In Section \ref{sec: Model_spaces_classification} we determine the sub-Riemannian model spaces of step and rank three that are also Carnot groups. The cases in Theorem~\ref{introductory_theorem} (a), (b) and (c) are treated respectively in Sections \ref{Sec: C33}, \ref{sec:(3,6,11)-classification} and \ref{sec:Section (3,6,14)-classification}, where we give a detailed classification with explicit isometry groups and sub-Riemannian structures. Just as three-dimensional Riemannian spheres have invariant Lie group structures, there are several sub-Riemannian model spaces that have Lie group structures in rank three. This holds even though the holonomy of the model space analogue of the Levi-Civita connection is non-zero. We show several examples of this in Section \ref{Sec: C33}. Finally, there are sub-Riemannian structures on both the compact real form $\mathrm{G}_2^c$ and the split real form $\mathrm{G}_2^s$ of $\mathrm{G}_2$ where the isometry group has maximal dimension, see Remark~\ref{remark:G2SR}. However, since they are not symmetric spaces they do not fall under our definition of a sub-Riemannian model space.

\section{Preliminaries}
\label{sec:Preliminaries}

\subsection{Sub-Riemannian Geometry}
In this section we will recall basic definitions and terminology in sub-Riemannian geometry. \emph{A sub-Riemannian manifold} is a triple $(M,H,g)$ where $M$ is a connected manifold, $H \subseteq TM$ is a subbundle, and $g$ is a smooth fiber metric defined on $H$. We refer to $H$ as \emph{the horizontal distribution} and to $g$ as \emph{the sub-Riemannian metric}. From~$H$, we obtain a flag of subsheaves \[\underline{H} \subseteq \underline{H}^2 \subseteq \cdots \subseteq \underline{H}^{j} \subseteq \cdots \subseteq \Gamma(TM), \qquad \underline{H} := \Gamma(H), \quad \underline{H}^j := \underline{H}^{j-1} + [\underline{H}, \underline{H}^{j-1}].\]
The notation $H_x^{j}$ for $x \in M$ will be used for the subspace of $T_x M$ consisting of the elements $X(x)$ where $X \in \underline{H}^{j}$. 
We say that $H$ is \textit{bracket-generating} if for every $x \in M$ there is a minimal number $r(x)$, called \emph{the step of} $H$, such that \[H_{x}^{r(x)} = T_{x}M.\] By using the abbreviation $n_{i}(x) := \textrm{rank}(H_{x}^{i})$ we form the multi-index
\[\mathfrak{G}(x) := (n_{1}(x), \dots, n_{r(x)}(x)),\]
called \emph{the growth vector} at $x \in M$. We call the subbundle $H$ \textit{equiregular} whenever the growth vector does not depend on the point $x \in M$. For a bracket-generating and equiregular horizontal distribution $H$ we obtain a flag of subbundles
\[H \subseteq H^2 \subseteq \dots \subseteq H^{r} = TM, \qquad H^i|_x= H_{x}^{i}. \]

An absolutely continuous curve $\gamma:[a,b] \to M$ will be called \emph{horizontal} if $\dot{\gamma}(t) \in H_{\gamma(t)}$ for almost every $t \in [a,b]$. We define the length of a horizontal curve $\gamma$ analogously as in the Riemannian setting by \[L(\gamma) = \int_{a}^{b} | \dot{\gamma}(t) |_g \,dt.\]
Associated to any sub-Riemannian manifold $(M,H,g)$ with $H$ bracket-generating is a distance function $d_{CC}$ called \emph{the Carnot-Carath\'eodory distance}. It is defined by $d_{CC}(x,y) = \inf L(\gamma)$, where the infimum is taken over all horizontal curves connecting the points $x$ and $y$ in $M$. Since the latter set of curves is non-empty by the Chow-Rashevskii Theorem \cite{Chow1940,Ras38}, the Carnot-Carath\'eodory distance gives $M$ the structure of a metric space. 

\subsection{Nilpotentization Procedure}
\label{subsec:nilpotentization_procedure}
A Lie algebra $\mathfrak{n}$ is called \emph{stratifiable} with \textit{step} $r$ if it can be decomposed as
\[\mathfrak{n} = \mathfrak{n}_1 \oplus \cdots \oplus \mathfrak{n}_r, \qquad [\mathfrak{n}_1, \mathfrak{n}_j] = \mathfrak{n}_{j+1}, \quad j=1, \dots, r-1, \qquad [\mathfrak{n}_1, \mathfrak{n}_r] = 0.\]
Such a Lie algebra is necessarily nilpotent and we call $\mathfrak{n}_1$ the \textit{generating layer} for $\mathfrak{n}$. \emph{A Carnot group} $(N, E, h)$ is a connected and simply connected Lie group $N$ whose Lie algebra $\mathfrak{n}$ is stratifiable, together with a sub-Riemannian structure $(E,h)$ defined by left translation of the generating layer $\mathfrak{n}_1$ with an inner product.

If $(M,H,g)$ is an equiregular sub-Riemannian manifold of step $r$, we can associate a Carnot group to each point of $M$ as follows: Fix a point $x \in M$ and introduce the stratified Lie algebra
\[\nil(x) = H_{x} \oplus H_{x}^{2}/H_x \oplus H_x^{3}/H_x^2 \oplus \dots \oplus T_xM/H_x^{r-1},\]
with brackets defined by
\[[[v],[w]] := [X,Y](x) \, \, \textrm{mod} \, H_{x}^{i + j - 1}, \qquad \quad [v] \in H_x^{i}/H_x^{i-1}, \, [w] \in H_x^{j}/H_x^{j-1},\]
where $X \in \underline{H}^{i}$ and $Y \in \underline{H}^j$ satisfy $X(x) = v$ and $Y(x) = w$. We refer the reader to \cite[Proposition 4.10]{Montgomery} for details about why this bracket is well defined. Let $\Nil(M,x)$ denote the connected and simply connected Lie group corresponding to $\mathfrak{nil}(x)$, equipped with a sub-Riemannian structure defined by left translation of $H_x$ and its inner product induced by $g$. This construction coincides with Gromov's description of the tangent cone of a sub-Riemannian manifold, see \cite[Chapter 8.4]{Montgomery} and the references within. 

\section{Sub-Riemannian model spaces}
\label{Sec: SubGeo}
\subsection{Isometries and Model Spaces}

This section will give the definition and basic properties of sub-Riemannian model spaces. All results together with justifications can be found in \cite{sub_Riemannian_Model_Spaces}.
\begin{definition}
\label{model_space_definition}
A (sub-Riemannian) model space is a simply connected and bracket-generating sub-Riemannian manifold $(M,H,g)$ satisfying the following integrability condition: For any points $x,y \in M$ and any linear isometry $q:H_x \to H_y$ there exists a smooth isometry $\varphi:M \to M$ such that $d\varphi|_{H_x} = q$.
\end{definition}
It follows from \cite[Theorem~8.17]{Semi-Riemannian} that Definition \ref{model_space_definition} generalizes the traditional model spaces in Riemannian geometry. Let us clarify the notion of isometry in the above definition: An \textit{isometry} between bracket generating sub-Riemannian manifolds $(M,H,g)$ and $(N,E,h)$ is a homeomorphism $\Phi:M \to N$ that preserves the Carnot-Carath\'eodory distances. The following regularity result about isometries is assembled from \cite[Theorem~1.2, Theorem~1.6, Corollary~1.8]{Smoothness_of_sub_riemannian_isometries}.

\begin{proposition}
\label{Regularity_of_isometries}
Let $(M,H,g)$ be a sub-Riemannian manifold that is bracket-generating and equiregular. Then any isometry $\varphi:M \to M$ is a smooth map satisfying $d\varphi (H) \subseteq H$ and restricts to a linear isometry \[d\varphi|_{H_x}:H_x \longrightarrow H_{\varphi(x)},\] for any $x \in M$. Moreover, any isometry is uniquely determined by its restricted differential $d\varphi|_{H_x}$ at a single point $x \in M$. The collection of all isometries $\Isom(M)$ from $M$ to itself forms a Lie group under composition.
\end{proposition}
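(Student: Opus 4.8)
The plan is to assemble the statement from three ingredients, with the genuinely hard analytic input — smoothness of distance-preserving maps — imported from the regularity theory of sub-Riemannian isometries. First I would establish smoothness. An isometry $\varphi$ is by definition only a homeomorphism preserving $d_{CC}$, so a priori there is no differentiable structure for it to respect and one cannot differentiate $\varphi$ directly. The strategy is to show that a metric-preserving map between equiregular bracket-generating spaces is Pansu-differentiable almost everywhere, that its Pansu differential is a graded isomorphism of the nilpotentizations $\nil(x)$, and then to bootstrap this measurable differentiability up to $C^\infty$ regularity, using equiregularity together with the fact that $\varphi$ sends length-minimizing horizontal curves to length-minimizing horizontal curves. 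This is the principal difficulty, and I would cite it rather than reprove it.

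Second, granted smoothness, the inclusion $d\varphi(H) \subseteq H$ and the linear-isometry property follow from the metric characterization of the horizontal distribution. A smooth curve is horizontal precisely when it is locally rectifiable for $d_{CC}$, so $\varphi$ carries horizontal curves to horizontal curves; differentiating at a point then shows $d\varphi$ maps $H_x$ into $H_{\varphi(x)}$. Comparing the $d_{CC}$-lengths of short horizontal curves through $x$ with their images forces $d\varphi|_{H_x}$ to preserve the fiber metric $g$, that is, to be a linear isometry onto $H_{\varphi(x)}$.

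Third, for the uniqueness claim I would exploit the rigidity coming from bracket-generation. Since the codomain of $d\varphi|_{H_x}$ is $H_{\varphi(x)}$, this single datum already records the base point $\varphi(x)$. Because $\varphi$ preserves $H$ and hence the whole flag $H \subseteq H^2 \subseteq \cdots \subseteq TM$ together with its iterated brackets, the full differential $d\varphi|_{T_xM}$ is recovered from $d\varphi|_{H_x}$ through the description of $T_xM$ as iterated brackets of $H_x$. The resulting rigidity — that an isometry is pinned down by its complete $1$-jet, and thus by $d\varphi|_{H_x}$ alone, at one point — is exactly where bracket-generation enters, via the fact that two such isometries agreeing to first order at $x$ must agree on a set that is both open and closed, hence all of $M$.

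Finally, the Lie group structure follows formally once uniqueness is in hand: the assignment $\varphi \mapsto d\varphi|_{H_x}$ embeds $\Isom(M)$ \emph{injectively} into the bundle of linear isometries between horizontal fibers over $M$, a finite-dimensional manifold. Verifying that the image is closed and that composition and inversion are compatible with this embedding then endows $\Isom(M)$ with the structure of a Lie group acting smoothly on $M$. The main obstacle throughout remains the smoothness theorem of the first step; every subsequent assertion is structural and follows from it together with bracket-generation.
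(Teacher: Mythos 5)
Your overall architecture matches the paper's, which in fact proves nothing itself: the proposition is assembled entirely by citation from \cite{Smoothness_of_sub_riemannian_isometries} (Theorems~1.2 and 1.6 and Corollary~1.8 there cover, respectively, smoothness, the Lie group structure, and the determination of an isometry by its horizontal differential at a point). Your first two steps are fine: importing the smoothness theorem, and then deducing $d\varphi(H)\subseteq H$ and the linear-isometry property from the metric characterization of horizontal curves, is the standard and correct reduction.

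The genuine gap is in your third step. The claim that ``the full differential $d\varphi|_{T_xM}$ is recovered from $d\varphi|_{H_x}$ through the description of $T_xM$ as iterated brackets of $H_x$'' does not work as stated: a bracket $[X,Y](x)$ depends on the $1$-jets of $X$ and $Y$ at $x$, and $\Ad(\varphi)X = d\varphi\circ X\circ\varphi^{-1}$ involves $d\varphi$ on a neighborhood of $x$, not just its value at $x$. What the pointwise datum honestly yields is that $d\varphi|_{T_xM}$ induces a graded automorphism of the nilpotentization $\nil(x)$; since $\nil(x)$ is generated by its first layer, that graded automorphism is indeed pinned down by $d\varphi|_{H_x}$ --- but this determines $d\varphi|_{T_xM}$ only modulo maps acting as the identity on the associated graded of the flag $H_x \subseteq H_x^2 \subseteq \cdots \subseteq T_xM$. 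Killing this unipotent ambiguity requires a separate input (e.g., compactness of the isotropy group, since a compact group of flag-unipotent maps is trivial), and even then the passage from ``$\varphi(x)=x$ and $d\varphi|_{T_xM}=\operatorname{id}$'' to ``$\varphi=\operatorname{id}$'' needs the sub-Riemannian exponential/normal-geodesic analysis of the cited paper; your open-and-closed argument presupposes exactly this local rigidity, so as written it is circular. A smaller but real gap of the same kind sits in your last step: embedding $\Isom(M)$ as a closed subset of the bundle of horizontal isometries does not by itself produce a Lie group, since a closed subset of a manifold need not be a submanifold and the ambient object is a groupoid, not a group --- completing the Myers--Steenrod-style argument here is precisely the content of Theorem~1.6 in \cite{Smoothness_of_sub_riemannian_isometries}. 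The clean fix is to cite all three statements, as the paper does, rather than only the smoothness theorem.
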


We note that if $(M, H, g)$ is a bracket-generating and equiregular sub-Riemannian manifold with $\dim(M) = m$ and $\rank(H) = n$, we have \[\dim\left( \Isom(M)\right) \leq m + \frac{n(n-1)}{2}\] by Proposition~\ref{Regularity_of_isometries}, with equality if $M$ is a model space. In the Riemannian setting, the assumption that the dimension of $\Isom(M)$ is maximal is sufficient for a simply connected Riemannian manifold $M$ to be a model space by \cite[Theorem 6.3.3]{Kobayashi_Numisu}. However, the following example shows that this is not the case for simply connected sub-Riemannian manifolds.

\begin{example}[Maximal dimension and model spaces]
\label{maximal_dimension_example}
Let $\mathfrak{n} = \mathfrak{n}_1 \oplus \mathfrak{n}_2$ be the stratified Lie algebra with generating layer $\mathfrak{n}_1 =\spn \{ X_1, X_2, X_3, X_4 \}$, with center $\mathfrak{n}_2 = \spn \{ Y_1, Y_2, Y_3 \}$, and with bracket relations
$$[X_1, X_2] = [X_3, X_4] = Y_1, \quad [X_1, X_3] = [X_4, X_2] = Y_2, \quad [X_1 , X_4] = [X_2, X_3] = Y_3.$$
We give $\mathfrak{n}_1$ an inner product by requiring that $X_1, X_2, X_3, X_4$ form an orthonormal basis for $\mathfrak{n}_1$.
Let $N$ be the connected and simply connected Lie group corresponding to $\mathfrak{n}$ with sub-Riemannian structure $(E,h)$ defined by left translation of $\mathfrak{n}_1$ and its inner product. All the left translations are in $\Isom(N)$ by definition. Furthermore, it can be verified that for any orientation preserving isometry $q :\mathfrak{n}_1 \to \mathfrak{n}_1$, we can find an isometry $\varphi: N \to N$ with $\varphi(1) = 1$ and $\varphi |_{\mathfrak{n}_1} = q$. Hence we have that $\Isom(N)$ has the maximal dimension \[13 = \dim(N) + (\rank(E))(\rank(E) -1)/2.\] However, orientation reversing isometries can not be integrated because it follows from \cite[Example 4.1]{sub_Riemannian_Model_Spaces} that the free nilpotent Lie group $\mathsf{F}[4,2]$ is the only model space with step two and rank four that is a Carnot group. We will define the free nilpotent Lie group $\mathsf{F}[n,r]$ of step $n$ and rank $r$ in Example \ref{free_lie_algebra}.
\end{example}

\subsection{Canonical Partial Connections} \label{sec: Partial_Connections_and_Horizontal_Holonomy}
We recall that a partial connection $\nabla$ on a vector bundle $E \to M$ in the direction of $H \subseteq TM$ is a map 
$$\nabla:\Gamma(H) \times \Gamma(E) \longrightarrow \Gamma(E) , \qquad  (X,Y)  \longmapsto \nabla_{X}Y,$$
that is $C^{\infty}(M)$-linear in the first component, $\mathbb{R}$-linear in the second, and satisfies the Leibniz property \[\nabla_{X}\left(fY\right) = X(f)Y + f\nabla_{X}Y, \qquad X \in \Gamma(H), \, Y \in \Gamma(E), \, f \in C^{\infty}(M).\] Similarly as for regular affine connections, we can define a holonomy group $\Hol^\nabla(x)$ at a point $x \in M$ by considering parallel transport along loops based at $x$ and horizontal to $H$. If $H$ is bracket-generating then $\Hol^\nabla(x)$ is a Lie group and it will be connected whenever $M$ is simply connected. More details on holonomy of partial connections can be found in \cite{CGJK15}. \par 
The partial connections we will be interested in are on the horizontal bundle $H$ of a sub-Riemannian manifold with metric $g$. We say that a partial connection $\nabla$ on $H$ in the direction of $H$ is \textit{compatible with $g$} if
\[X \langle Y,Z \rangle_g = \langle \nabla_{X}Y , Z \rangle_g + \langle Y,\nabla_{X} Z \rangle_g, \qquad X,Y,Z \in \underline{H}. \]
If the partial connection $\nabla$ satisfies 
\begin{equation} \label{invariant} \nabla_{\Ad(\varphi)X} \Ad(\varphi)Y = \Ad(\varphi)\nabla_{X} Y, \quad \Ad(\varphi)X(x):=d\varphi\circ X\circ \varphi^{-1}(x),\end{equation}
for any $\varphi \in \Isom(M)$ we call it \textit{invariant under isometries}.

Let $(M,H,g)$ denote a sub-Riemannian model space and let us abbreviate $G := \Isom(M)$ and $\mathfrak{g}:=\textrm{Lie}(G)$. We denote by \[K_x := \left\{\Phi \in G \, \Big | \, \Phi(x) = x \right\}\] the isotropy group corresponding to the point $x \in M$. The isotropy groups are all conjugate to each other and are compact by \cite[Corollary 5.6]{Polish_Metric_Spaces}. We omit the explicit reference to the point $x$ in the notations $K := K_x$ and $\Hol^\nabla := \Hol^\nabla(x)$, as this is of minor relevance in our arguments. Hence we can consider the principal bundle
\begin{equation} \label{PrincipalG} K \longrightarrow{} G \xrightarrow{\, \, \pi \, \, } M = G/K, \nonumber \end{equation}
where $\pi(\varphi) = \varphi(x)$ for $\varphi \in G$. Recall that the isotropy group $K$ acts on $\mathfrak{g}$ by restricting the adjoint action of $G$ on $\mathfrak{g}$. Moreover, we use the notation $\ell_{g}:G \to G$ for the map sending $h \in G$ to $gh$.

\begin{proposition} \label{canonical_connection_theorem}
\begin{enumerate}[\rm (a)]
\item There exists a unique partial connection $\nabla$ on $H$ in the direction of $H$ that is compatible with $g$ and invariant under isometries.
\item There is a unique subspace $\mathfrak{p}$ of $\mathfrak{g}$ that is $K$-invariant and mapped bijectively onto $H_x$ by $d\pi$. Let $\gamma: [a,b] \to M$ be a horizontal curve and let $\varphi: [a,b] \to G$ be a curve above $\gamma$, that is, $\pi(\varphi(t)) = \gamma(t)$ for every $t \in [a,b]$. Assume that \[d\ell_{\varphi(t)^{-1}}\left(\dot \varphi(t)\right) \in \mathfrak{p}\] for every $t \in [a,b]$. Then
$$X(t) = d\varphi(t) d\varphi(a)^{-1} v$$
is a $\nabla$-parallel vector field along $\gamma$ for any $v \in H_{\gamma(a)}$.
\item The holonomy group $\Hol^\nabla$ does not depend on the chosen base point $x \in M$. It is either trivial or isomorphic to $\SO(n)$, where $n$ is the rank of $H$. If $\Hol^\nabla$ is trivial, then $M$ can be given a Lie group structure such that $(H,g)$ is left-invariant. Furthermore, every isometry is then a composition of a left translation and a Lie group automorphism.
\item Define an inner product $\langle \, \cdot \, , \, \cdot \, \rangle_{\mathfrak{p}}$ on $\mathfrak{p}$ by
$$\langle A , B\rangle_{\mathfrak{p}} = \langle d\pi \, A , d\pi \, B\rangle_{g}.$$
Assume $\Hol^\nabla$ is isomorphic to $\SO(n)$ and consider $(G,\tilde H, \tilde g)$ with $(\tilde H, \tilde g)$ defined by left translation of $\left( \mathfrak{p} , \langle \, \cdot \, , \, \cdot \, \rangle_{\mathfrak{p}} \right)$. Then $(G,\tilde H, \tilde g)$ satisfies the definition of a sub-Riemannian model space with the possible exception of being connected and simply connected. This can easily be remedied by considering the universal cover of the identity component of $G$ together with the lifted structure.
\end{enumerate}
\end{proposition}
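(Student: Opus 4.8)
The plan is to treat $M$ as a reductive homogeneous space and extract everything from the reductive splitting. First note that the integrability condition forces $G=\Isom(M)$ to act transitively: given $x,y$, any linear isometry $q\colon H_x\to H_y$ exists (both fibres are $n$-dimensional inner product spaces) and integrates to $\varphi\in G$ with $\varphi(x)=y$, so $M=G/K$. Since $\dim G=m+n(n-1)/2$ and $\dim M=m$, we get $\dim K=n(n-1)/2$, and as the isotropy representation $K\to\Ort(H_x)$ is onto (again by the axiom), the differential $\mathfrak k\to\so(n)$ is an isomorphism; thus $\mathfrak k\cong\so(n)\cong\Lambda^2\mathbb R^n$ as a $K$-module. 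Compactness of $K$ gives an $\Ad(K)$-invariant splitting $\mathfrak g=\mathfrak k\oplus\mathfrak m$. For part (b), $d\pi$ has kernel $\mathfrak k$ and intertwines $\Ad(K)$ with the isotropy representation, which preserves $H_x$; hence $\tilde{\mathfrak p}:=d\pi^{-1}(H_x)$ is $\Ad(K)$-invariant and contains $\mathfrak k$, and I would take $\mathfrak p$ to be its $\Ad(K)$-invariant orthogonal complement to $\mathfrak k$, so that $d\pi|_{\mathfrak p}\colon\mathfrak p\to H_x$ is a bijection. For existence of $\nabla$ in (a) I would define parallel transport by the formula in (b): lift $\gamma$ to $\varphi$ with $\mathfrak p$-valued logarithmic derivative and declare $d\varphi(t)\,d\varphi(a)^{-1}v$ parallel. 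Two such lifts differ only by a constant element of $K$, so the transport is well defined; it is manifestly $G$-invariant and metric compatible, and differentiating yields a partial connection satisfying (a).

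Uniqueness in (a) and (b) is where representation theory enters. Two admissible complements $\mathfrak p_1,\mathfrak p_2$ differ by a $K$-equivariant map $H_x\to\mathfrak k$; since $H_x\cong\mathbb R^n$ is the standard $\Ort(n)$-module and $\mathfrak k\cong\Lambda^2\mathbb R^n$, the vanishing $\Hom_{\Ort(n)}(\mathbb R^n,\Lambda^2\mathbb R^n)=0$ forces $\mathfrak p_1=\mathfrak p_2$. The same mechanism handles $\nabla$: two compatible invariant partial connections differ by a $G$-invariant tensor $S\colon H\times H\to H$ with $S(X,\cdot)\in\so(H)$ by metric compatibility, hence determined by its value at $x$, a $K$-equivariant map $\mathbb R^n\to\Lambda^2\mathbb R^n$, which is zero.

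For part (c), simple connectedness of $M$ makes $\Hol^\nabla\subseteq\Ort(H_x)$ connected, hence contained in $\SO(n)$, and transitivity of $G$ together with invariance of $\nabla$ gives basepoint independence. Because $\nabla$ is isometry invariant and $K$ fixes $x$, the isotropy image $\Ort(n)$ normalises $\Hol^\nabla$, so the holonomy algebra $\mathfrak{hol}\subseteq\so(n)$ is an $\Ort(n)$-submodule of the irreducible module $\Lambda^2\mathbb R^n$; therefore $\mathfrak{hol}\in\{0,\so(n)\}$, i.e.\ $\Hol^\nabla$ is trivial or $\SO(n)$. When it is trivial, the Ambrose--Singer description of this canonical connection gives $\pr_{\mathfrak k}[\mathfrak p,\mathfrak p]=0$, and combined with bracket generation this produces a subalgebra complementary to $\mathfrak k$ that exponentiates to a subgroup acting simply transitively on $M$; this furnishes the left-invariant Lie group structure, and an isometry preserving the associated parallel frame must then be a translation composed with an automorphism.

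Finally, in part (d) the inner product on $\mathfrak p$ is the pullback of $g$, and when $\Hol^\nabla\cong\SO(n)$ the subspace $\pr_{\mathfrak k}[\mathfrak p,\mathfrak p]$ fills out $\mathfrak k=\so(n)$, so $\mathfrak p$ Lie-generates $\mathfrak g$ and $(G,\tilde H,\tilde g)$ is bracket generating. Left translations by $G$ and the automorphisms $\Ad(k)$, $k\in K$, which preserve $\mathfrak p$ and realise all of $\Ort(\mathfrak p)$ on the identity fibre, give enough isometries to verify the integrability condition through Proposition~\ref{Regularity_of_isometries}, with connectedness and simple connectedness repaired by passing to the universal cover of the identity component. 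I expect the main obstacle to lie in the trivial-holonomy half of (c) and in (d): translating the abstract vanishing or surjectivity of $\Hol^\nabla$ into the concrete $\mathfrak k$-components of $[\mathfrak p,\mathfrak p]$, and confirming that the homogeneous structure on $G$ satisfies the full integrability axiom rather than merely attaining maximal isometry dimension, as Example~\ref{maximal_dimension_example} shows these are genuinely different.
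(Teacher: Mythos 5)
Your proposal is essentially the argument the paper relies on: note that the paper itself gives no proof of Proposition~\ref{canonical_connection_theorem}, deferring entirely to \cite{sub_Riemannian_Model_Spaces}, and your reconstruction --- transitivity of $G$ from the model space axiom, $K \cong \Ort(n)$ via the faithful and surjective isotropy representation, an $\Ad(K)$-invariant complement inside $d\pi^{-1}(H_x)$, uniqueness of both $\mathfrak{p}$ and $\nabla$ from $\Hom_{\Ort(n)}(\mathbb{R}^n, \Lambda^2\mathbb{R}^n) = 0$, and the holonomy dichotomy from $\Ort(n)$-irreducibility of $\Lambda^2\mathbb{R}^n$ --- is the strategy of that source. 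Two details you got exactly right and that are easy to miss: insisting on the full group $\Ort(n)$ rather than $\SO(n)$ is what kills the difference tensor (for $n=3$ and $\SO(3)$-equivariance alone the cross product survives, which is precisely the one-parameter family $\nabla^c$ of Lemma~\ref{lemma:CrossProduct}), and it is also what makes $\Lambda^2\mathbb{R}^4$ irreducible (under $\SO(4)$ it splits into self-dual and anti-self-dual parts, which orientation-reversing elements swap), so your normalization argument covers $n=4$ without a special case.

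The one step that would fail as literally written is the reduction of holonomy to $\pr_{\mathfrak{k}}[\mathfrak{p},\mathfrak{p}]$. First, this projection depends on a choice of complement to $\mathfrak{k}$ and need not vanish when the holonomy is trivial; second, and more seriously, a single bracket is not enough at step three. The correct object is the Lie subalgebra $\mathfrak{s} \subseteq \mathfrak{g}$ generated by $\mathfrak{p}$: trivial holonomy is equivalent to $\mathfrak{s} \cap \mathfrak{k} = 0$, so that $\mathfrak{s}$ is a genuine complement to $\mathfrak{k}$ (by bracket generation of $H$) exponentiating to the simply transitive group after passing to covers, while $\Hol^\nabla \cong \SO(n)$ is equivalent to $\mathfrak{k} \subseteq \mathfrak{s}$, i.e.\ $\mathfrak{s} = \mathfrak{g}$, which is what yields bracket generation of $\tilde{H}$ on $G$ in part (d). This distinction matters concretely in this very paper: in Lemma~\ref{lemma:HolCZero} the subalgebra generated by $\mathfrak{p}_c$ must be computed through three levels of brackets before the obstruction $(-c^4 + a_2 + c^2 a_1)\, x \times z$ appears, and vanishing of the first $\mathfrak{k}$-component alone would not detect it. You flagged this as the expected weak point yourself, and the fix is exactly the holonomy--orbit correspondence: the endpoints of $\tilde{H}$-horizontal paths in $G$ form the connected subgroup $S$ with Lie algebra $\mathfrak{s}$, so by your part (b) the holonomy group is the image of $S \cap K$ in $\Ort(H_x)$. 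With that substitution the proof closes; the rest (well-definedness of transport because two $\mathfrak{p}$-horizontal lifts differ by a constant element of $K$, and the verification in (d) that left translations together with conjugations by elements of $K$ preserve $(\tilde{H}, \tilde{g})$, realize all of $\Ort(\mathfrak{p})$, and hence give the full integrability axiom rather than merely maximal isometry dimension) is sound.
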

We will use the term \emph{canonical partial connection} for the partial connection in Proposition~\ref{canonical_connection_theorem}~(a). The canonical partial connection gives us a necessary condition for sub-Riemannian model spaces $(M,H, g)$ and $(\hat M, \hat H, \hat g)$ to be isometric as we now explain.
Assume $\Phi:M \to \hat M$ is an isometry. Choose a point $x \in M$ and consider the isotropy groups
\[K := K_{x} \subseteq G := \Isom(M), \qquad \hat K := \hat K_{\Phi(x)} \subseteq \hat G := \Isom(\hat M).\]
\begin{wrapfigure}{r}{5cm}
\begin{tikzcd}[row sep=scriptsize, column sep=scriptsize]
& \mathfrak{k} \arrow[dl,"\exp"] \arrow[rrr,dashed] \arrow[dd,hookrightarrow] & & & \hat{\mathfrak{k}} \arrow[dd,hookrightarrow] \arrow[dl,"\exp"] \\
K \arrow[dd] \arrow[rrr,"\bar{\Phi}"] & & & \hat K \arrow[dd]\\
& \mathfrak{g} \arrow[dl,"\exp"] \arrow[rrr,dashed] & & & \hat{\mathfrak{g}} \arrow[dl,"\exp"] \\
G \arrow[dd,twoheadrightarrow] \arrow[rrr,"\bar{\Phi}"] & & & \hat{G} \arrow[dd,twoheadrightarrow]\\
& \mathfrak{p} \arrow[uu,hookrightarrow] \arrow[rrr,dashed] & & & \hat{\mathfrak{p}} \arrow[uu,hookrightarrow] \\
 M \arrow[ur,rightarrowtail,"\nabla"] \arrow[rrr,"\Phi"] & & & \hat M \arrow[ur,rightarrowtail,"\hat \nabla"]
\end{tikzcd}
\end{wrapfigure}
We let $\mathfrak{k}$ and $\mathfrak{g}$ denote the Lie algebras of respectively $K$ and $G$, and similarly let $\hat{ \mathfrak{k}}$ and $\hat {\mathfrak{g}}$ be Lie algebras of $\hat K$ and $\hat G$.
The map $\Phi$ induces a group homomorphism $\bar{\Phi}:G \longrightarrow \hat G$ by conjugation $\bar{\Phi}(\varphi) = \Phi \circ \varphi \circ \Phi^{-1}$ for $\varphi \in G.$
Note that $\bar{\Phi}(K) = \hat K$ and that $d\bar{\Phi}$ restricts to a Lie algebra isomorphism from $\mathfrak{k}$ onto $\hat{\mathfrak{k}}$. Let $\mathfrak{p} \subseteq \mathfrak{g}$ be the canonical subspace described in Proposition~\ref{canonical_connection_theorem}~(b) with the inner product given in Proposition~\ref{canonical_connection_theorem}~(d). The subspace $\mathfrak{p}$ should be thought of as a principal bundle analogue of the canonical partial connection $\nabla$. Define $\hat {\mathfrak{p}}$ similarly as a subspace of $\hat{\mathfrak{g}}$ corresponding to canonical connection $\hat \nabla$. Since $\Phi$ takes $\nabla$-parallel vector fields along horizontal curves to $\hat \nabla$-parallel vector fields, we have that $d\Phi (\mathfrak{p}) \subseteq \hat {\mathfrak{p}}$. Furthermore, $d\Phi$ restricted to $\mathfrak{p}$ is an isometry since $d\Phi$ maps $H_x$ isometrically onto $\hat{H}_{\Phi(x)}$. These observations will be used in the proof of Lemma \ref{A_33_Classification}.

\subsection{Carnot Model Spaces and Free Nilpotent Lie Groups}

We will refer to the model spaces that are also Carnot groups as \textit{Carnot model spaces} for convenience. The different nilpotentizations $\Nil(M,x)$ of a sub-Riemannian model space $(M,H,g)$ does not depend on the point $x \in M$. Hence we will simplify the notation $\Nil(M,x)$ to $\Nil(M)$ whenever $(M,H,g)$ is a model space. The following proposition reveals that Carnot model spaces act as an invariant for classifying all model spaces.

\begin{proposition}
\label{Nilpotentization_Model_Space}
If $(M,H,g)$ is a sub-Riemannian model space then $\Nil(M)$ is a Carnot model space with the same growth vector. In particular, the only growth vectors sub-Riemannian model spaces can have are those that occur on Carnot model spaces.
\end{proposition}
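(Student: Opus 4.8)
The plan is to build directly on the model space structure of $M$ and transport its symmetries to the tangent cone. First I would record two preliminaries. Applying the integrability condition of Definition~\ref{model_space_definition} with an arbitrary linear isometry $q\colon H_x\to H_y$ produces an isometry carrying $x$ to $y$, so $\Isom(M)$ acts transitively; since by Proposition~\ref{Regularity_of_isometries} every isometry preserves $H$ and the pushforward commutes with Lie brackets, each isometry maps $H_x^i$ isomorphically onto $H_{\varphi(x)}^i$, whence $M$ is equiregular and $\Nil(M):=\Nil(M,x)$ is well defined independently of $x$. Moreover $\Nil(M)$ is by construction a connected, simply connected Carnot group whose graded layers $\mathfrak{n}_i=H_x^i/H_x^{i-1}$ have precisely the dimensions $n_i(x)-n_{i-1}(x)$ occurring in $M$. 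This already gives a Carnot group with the same growth vector, so the only remaining point is to verify the integrability condition for $\Nil(M)$.

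Write $N=\Nil(M)$. Its left translations are isometries of the Carnot structure and act transitively, so it suffices to realize every linear isometry $q\colon E_1\to E_1$ of the horizontal space at the identity by an isometry fixing $1$; here $E_1=\mathfrak{n}_1=H_x$. Given $q\in\Ort(\mathfrak{n}_1)$, the model space property of $M$ (taken with $x=y$) furnishes $\varphi\in\Isom(M)$ with $\varphi(x)=x$ and $d\varphi|_{H_x}=q$. As $\varphi$ preserves the flag $H\subseteq H^2\subseteq\cdots$, the fixed-point differential $d\varphi_x$ preserves each $H_x^i$ and descends to a graded linear map $\Psi\colon\nil(x)\to\nil(x)$.

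The key step is to check that $\Psi$ is a Lie algebra automorphism restricting to $q$ on $\mathfrak{n}_1$. This follows from naturality of the bracket under pushforward: with the notation $\Ad(\varphi)X=d\varphi\circ X\circ\varphi^{-1}$ of \eqref{invariant} one has $\Ad(\varphi)[X,Y]=[\Ad(\varphi)X,\Ad(\varphi)Y]$, and for $X\in\underline{H}^i$ with $X(x)=v$ the field $\Ad(\varphi)X$ again lies in $\underline{H}^i$ and represents $d\varphi_x(v)$ at the fixed point $x$. Evaluating the defining formula for the bracket on $\nil(x)$ then yields $\Psi([[v],[w]])=[[\Psi v],[\Psi w]]$, while on the generating layer $\Psi|_{\mathfrak{n}_1}=d\varphi|_{H_x}=q$ is orthogonal. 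Thus $\Psi$ is a graded automorphism of the stratified Lie algebra that is an isometry on $\mathfrak{n}_1$.

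Finally, since $N$ is simply connected, $\Psi$ integrates to a unique Lie group automorphism $\psi\colon N\to N$ with $d\psi_1=\Psi$ and $\psi(1)=1$. Using $\psi\circ\ell_g=\ell_{\psi(g)}\circ\psi$ together with $\Psi(\mathfrak{n}_1)=\mathfrak{n}_1$ shows that $\psi$ preserves the left-invariant distribution $E$, and it preserves the metric because $q$ is an isometry and the metric is left invariant; hence $\psi\in\Isom(N)$ with $d\psi|_{E_1}=q$. This verifies the integrability condition at $1$, and left translation propagates it to all of $N$, so $\Nil(M)$ is a model space, and being a Carnot group it is a Carnot model space. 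I expect the only delicate point to be the verification that $\Psi$ is bracket-preserving, namely the interplay between the fixed-point differential $d\varphi_x$ and the well-definedness of the nilpotentization bracket; everything else is formal once homogeneity and the pushforward identity for brackets are in hand.
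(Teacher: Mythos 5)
Your proof is correct and takes essentially the same route as the paper's source: the proposition is stated here without proof, with the justification deferred to \cite{sub_Riemannian_Model_Spaces}, where the argument is precisely your transport of isotropy isometries to graded automorphisms — an isometry $\varphi$ fixing $x$ preserves the flag $H_x^i$, the identity $\Ad(\varphi)[X,Y]=[\Ad(\varphi)X,\Ad(\varphi)Y]$ makes the induced map $\Psi$ a bracket-preserving, metric-preserving graded automorphism of $\nil(x)$, and simple connectedness of the Carnot group integrates $\Psi$ to an isometry, with left translations supplying transitivity. You also correctly flag the one delicate point (compatibility of $d\varphi_x$ with the well-definedness of the nilpotentization bracket), and your handling of it is sound.
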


This gives a strategy for classifying model spaces: We start by classifying the Carnot model spaces of a certain step and rank. Then we will divide all model spaces with the same step and rank into distinct families according to their nilpotentization. The following example is of particular importance for classifying all Carnot model spaces. 
\begin{example}
\label{free_lie_algebra}
Fix a set $\{X_1,\dots,X_n\}$ of $n$ elements. Consider the free vector space of all elements formally of the form \[[X_{i_1},[X_{i_2},\dots,[X_{i_{k-1}},X_{i_{k}}]\dots]],\] where $i_j \in \{1,\dots,n\}$ for every $j = 1,\dots, k$. We identify two such elements if they can be transformed into one another via either bilinearity, skew-symmetry, or the Jacobi identity of the formal bracket $[\cdot,\cdot]$. The resulting space has a natural bracket operation turning it into a Lie algebra called the \textit{free Lie algebra} of rank $n$. The free Lie algebra only depends, up to isomorphism, on the cardinality of the generating set and not the choice of elements themselves. It has an obvious grading with the generating layer spanned by $X_1, \dots, X_n$. \par 
By identifying each element consisting of $r+1$ or more brackets with zero, we obtain the \textit{free nilpotent Lie algebra} $\mathsf{f}[n,r]$ of rank $n$ and step $r$. This inherits a grading $\mathsf{f}[n,r] = \mathfrak{f}_1 \oplus \cdots \oplus \mathfrak{f}_r$ from the free Lie algebra. The dimension of the $k$'th layer is given by \textit{Witt's formula} 
\begin{equation}
\label{Witts_Formula}
    \textrm{dim}(\mathfrak{f}_{k}) = \frac{1}{k}\sum_{d|k}\mu(d)n^{k/d}, \quad k \leq r,
\end{equation}
where $\mu$ denotes the \textit{M\"obius function}. Let $\mathsf{F}[n,r]$ denote the connected and simply connected Lie group corresponding to $\mathsf{f}[n,r]$ called the \textit{free nilpotent Lie group} of rank $n$ and step $r$. Fix an inner product $\langle \cdot, \cdot \rangle$ on the generating layer $\mathfrak{f}_1$ of $\mathsf{f}[n,r]$. We equip $\mathsf{F}[n,r]$ with the left-translated structure $(H,g)$ of the generating layer $\mathfrak{f}_1$ and its inner product $\langle \cdot, \cdot \rangle$. It follows from \cite{sub_Riemannian_Model_Spaces} that $(\mathsf{F}[n,r],H,g)$ is a Carnot model space. 
\end{example}

The classification of all Carnot model spaces can be reformulated as the following representation theory problem on the free nilpotent Lie algebras. 

\begin{proposition}
\label{strategy_carnot_groups}
Let $\Ort(\mathfrak{f}_1)$ denote the linear isometries of the generating layer $\mathfrak{f}_1$ of $\mathsf{f}[n,r]$. Given $q \in \Ort(\mathfrak{f}_1)$, we write \[\varphi_q: \mathsf{F}[n,r] \to \mathsf{F}[n,r]\] for the unique sub-Riemannian isometry satisfying $d\varphi_q |\mathfrak{f}_1 = q$. Consider $\mathsf{f}[n,r]$ as a representation of $\Ort(\mathfrak{f}_1)$ through the map $q \mapsto d_1 \varphi_q$. Let $\mathfrak{a}$ be an ideal in $\mathsf{f}[n,r]$ that is also a sub-representation and assume furthermore that $\mathfrak{f}_r \not \subseteq \mathfrak{a}$. Then
$$\mathfrak{n} = \mathsf{f}[n,r]/\mathfrak{a}$$
is the Lie algebra of a Carnot model space of rank $n$ and step $r$. Moreover, all Lie algebras of Carnot model spaces can be obtained in this way.
\end{proposition}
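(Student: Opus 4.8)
The plan is to establish the two implications separately, relying on three facts from the preceding material. First, $(\mathsf{F}[n,r],H,g)$ is itself a Carnot model space (Example~\ref{free_lie_algebra}), and because left translations are isometries while an identity-fixing isometry of a Carnot group is a graded Lie algebra automorphism (see \cite{sub_Riemannian_Model_Spaces} and Proposition~\ref{canonical_connection_theorem}), the assignment $q\mapsto d_1\varphi_q$ realizes $\Ort(\mathfrak{f}_1)$ as graded automorphisms of $\mathsf{f}[n,r]$ acting by $q$ on $\mathfrak{f}_1$. Second, the universal property of the free nilpotent Lie algebra: a graded homomorphism out of $\mathsf{f}[n,r]$ into a step-$\le r$ nilpotent Lie algebra is uniquely determined by, and may be prescribed arbitrarily on, the generating layer $\mathfrak{f}_1$. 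Third, the criterion of \cite{sub_Riemannian_Model_Spaces} that a Carnot group with generating layer $\mathfrak{n}_1$ is a model space precisely when every $q\in\Ort(\mathfrak{n}_1)$ extends to a graded automorphism of its Lie algebra.

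For the converse, let $(N,E,h)$ be a Carnot model space of rank $n$ and step $r$ with stratified Lie algebra $\mathfrak{n}=\mathfrak{n}_1\oplus\cdots\oplus\mathfrak{n}_r$. An orthonormal basis of $\mathfrak{n}_1$ identifies it isometrically with $\mathfrak{f}_1$, and since $\mathfrak{n}_1$ generates $\mathfrak{n}$ the universal property yields a surjective graded homomorphism $\pi\colon\mathsf{f}[n,r]\to\mathfrak{n}$ extending this identification. Put $\mathfrak{a}=\ker\pi$, so $\mathfrak{n}\cong\mathsf{f}[n,r]/\mathfrak{a}$; then $\mathfrak{a}$ is a graded ideal and $\mathfrak{f}_r\not\subseteq\mathfrak{a}$, since $\pi(\mathfrak{f}_r)=\mathfrak{n}_r\ne0$ because $\mathfrak{n}$ has step $r$. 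To see $\mathfrak{a}$ is a sub-representation, fix $q\in\Ort(\mathfrak{f}_1)$ and let $\psi_q$ be the graded automorphism of $\mathfrak{n}$ extending $q$, available because $N$ is a model space. Both $\pi\circ d_1\varphi_q$ and $\psi_q\circ\pi$ are graded homomorphisms $\mathsf{f}[n,r]\to\mathfrak{n}$ whose restriction to $\mathfrak{f}_1$ is $x\mapsto\pi(qx)$, so they coincide by the universal property; hence $d_1\varphi_q(\mathfrak{a})\subseteq\mathfrak{a}$, with equality upon applying the same to $q^{-1}$.

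For the forward implication, assume $\mathfrak{a}$ is an ideal and a sub-representation with $\mathfrak{f}_r\not\subseteq\mathfrak{a}$, and write $\pi$ for the projection onto $\mathfrak{n}=\mathsf{f}[n,r]/\mathfrak{a}$. Since $\mathfrak{f}_1$ is the standard, irreducible $\Ort(\mathfrak{f}_1)$-representation, the sub-representation $\mathfrak{a}\cap\mathfrak{f}_1$ is $0$ or all of $\mathfrak{f}_1$; the latter would force the ideal $\mathfrak{a}$ to contain the subalgebra generated by $\mathfrak{f}_1$, i.e.\ $\mathfrak{a}=\mathsf{f}[n,r]$, contradicting $\mathfrak{f}_r\not\subseteq\mathfrak{a}$. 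Thus $\mathfrak{a}\cap\mathfrak{f}_1=0$, so $\mathfrak{n}_1=\pi(\mathfrak{f}_1)$ is $n$-dimensional and inherits the inner product of $\mathfrak{f}_1$. Granting for the moment that $\mathfrak{a}$ is homogeneous, the grading of $\mathsf{f}[n,r]$ descends to a stratification $\mathfrak{n}=\bigoplus_k\mathfrak{f}_k/(\mathfrak{a}\cap\mathfrak{f}_k)$ with $\mathfrak{n}_r\ne0$, exhibiting $\mathfrak{n}$ as the Lie algebra of a Carnot group of rank $n$ and step $r$. As $\mathfrak{a}$ is a sub-representation, each $d_1\varphi_q$ preserves $\mathfrak{a}$ and descends to a graded automorphism $\bar\varphi_q$ of $\mathfrak{n}$ acting as $q$ on $\mathfrak{n}_1$; being orthogonal on the generating layer these are isometries, and together with the left translations they verify the integrability condition, so the associated Carnot group is a model space.

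The main obstacle is exactly the homogeneity of $\mathfrak{a}$ invoked above, which is not formal. Invariance under $\Ort(\mathfrak{f}_1)$ alone only produces the coarse $\mathbb{Z}/2$-grading from $d_1\varphi_{-\id}$, which acts on $\mathfrak{f}_k$ by $(-1)^k$, and distinct layers can be isomorphic as $\Ort(\mathfrak{f}_1)$-representations (for instance $\mathfrak{f}_1\cong\mathfrak{f}_3$ when $n=2$), so a priori $\mathfrak{a}$ could harbour diagonal, non-homogeneous elements; the ideal structure must be used. The first step I would prove is that the degree-one projection $p_1(\mathfrak{a})$ vanishes: otherwise, by equivariance it equals $\mathfrak{f}_1$, so every $v\in\mathfrak{f}_1$ is the leading term of some $a^v\in\mathfrak{a}$, and the iterated brackets $\ad(a^{v_1})\cdots\ad(a^{v_{r-1}})a^{v_r}=\ad(v_1)\cdots\ad(v_{r-1})v_r$, all higher-degree contributions vanishing for degree reasons, lie in $\mathfrak{a}$ and, as the $v_i$ range over $\mathfrak{f}_1$, span $\mathfrak{f}_r$, contradicting $\mathfrak{f}_r\not\subseteq\mathfrak{a}$. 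An induction on the lowest degree occurring in $\mathfrak{a}$, combined with the decomposition of each layer into $\Ort(\mathfrak{f}_1)$-isotypic components and the same bracketing device to push leading terms into the top layer, then excludes any cross-layer mixing and gives $\mathfrak{a}=\bigoplus_k(\mathfrak{a}\cap\mathfrak{f}_k)$. Carrying this induction through the isotypic analysis is the technical heart of the argument; everything else is formal.
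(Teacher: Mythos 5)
Your skeleton is the standard one and, as far as this paper is concerned, there is nothing to compare it against: the paper states Proposition~\ref{strategy_carnot_groups} in Section~\ref{Sec: SubGeo} and defers all justifications to \cite{sub_Riemannian_Model_Spaces}. Within that skeleton, your converse direction is complete and correct (universal property of $\mathsf{f}[n,r]$, $\mathfrak{a}=\ker\pi$, and the intertwining $\pi\circ d_1\varphi_q=\psi_q\circ\pi$), granting the imported fact that identity-fixing isometries of Carnot groups induce graded automorphisms. Your forward direction is also correct \emph{modulo} the homogeneity of $\mathfrak{a}$, and you deserve credit for flagging that this is not formal; your first step, $p_1(\mathfrak{a})=0$, is proved correctly: irreducibility of $\mathfrak{f}_1$ plus the observation that $\ad(a^{v_1})\cdots\ad(a^{v_{r-1}})a^{v_r}=\ad(v_1)\cdots\ad(v_{r-1})v_r$ by nilpotency, and that left-normed brackets of generators span $\mathfrak{f}_r$, gives the contradiction with $\mathfrak{f}_r\not\subseteq\mathfrak{a}$.

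The gap is that the announced induction for leading terms of degree $k\geq 2$ is only gestured at, and the ``same bracketing device'' cannot carry it in general. If $a=a_k+(\text{higher terms})\in\mathfrak{a}$ with $a_k\in\mathfrak{f}_k$, $k\geq2$, then bracketing with $r-k$ generators only yields $\ad(v_1)\cdots\ad(v_{r-k})a_k\in\mathfrak{a}\cap\mathfrak{f}_r$; unlike in degree one this is no contradiction, since $\mathfrak{a}\cap\mathfrak{f}_r$ is allowed to be a nonzero proper subspace (the ideal $\mathfrak{b}\simeq\bar{\mathfrak{s}}\subseteq\mathfrak{f}_3$ defining $\mathsf{C}_{3,3}$ is exactly of this kind), and it in no way produces $a_k\in\mathfrak{a}$, which is what homogeneity demands. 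The device does settle $k=2$: since $\mathfrak{f}_2\simeq\wedge^2\mathbb{R}^n$ is $\Ort(n)$-irreducible (twisting by $\det$ is irrelevant for invariant subspaces), $p_2(\mathfrak{a})\neq0$ forces $p_2(\mathfrak{a})=\mathfrak{f}_2$, and then the iterated brackets span $[\mathfrak{f}_1,[\cdots,[\mathfrak{f}_1,\mathfrak{f}_2]]]=\mathfrak{f}_r$, a contradiction when $k<r$. But for $3\leq k<r$, which first occurs at $r\geq5$ because $d_1\varphi_{-\id}$ acts by $(-1)^k$ on $\mathfrak{f}_k$ and parity confines mixing to layers $k$ and $k+2m$, the span $[\mathfrak{f}_1,[\cdots,[\mathfrak{f}_1,p_k(\mathfrak{a})]]]$ can be proper (e.g.\ $\bar{\mathfrak{s}}\subseteq\mathfrak{f}_3$ mixing with an isomorphic isotypic piece of $\mathfrak{f}_5$ inside $\mathsf{f}[3,5]$), and your sketch supplies no mechanism for stripping the tail off such an element. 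So what you have actually proved is the proposition for $r\leq4$ --- where your Step~1, the $k=2$ argument, and parity already force $\mathfrak{a}=\bigoplus_k(\mathfrak{a}\cap\mathfrak{f}_k)$ --- which covers everything this paper uses (only $n=r=3$ enters, via Theorem~\ref{Classification_Carnot_Spaces}), but not the statement for general $(n,r)$. To repair the general case you must either carry out the isotypic induction in detail or find an argument independent of gradedness; as it stands, the ``technical heart'' you name is acknowledged but not supplied.
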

We note that $d_1 \varphi_q$ is uniquely determined by the relations
$$d_1 \varphi_q [B_1 , B_2] =  [ d_1 \varphi_q B_1 , d_1 \varphi_q B_2], \qquad d_1 \varphi_q A = q A, \qquad B_1, B_2 \in \mathfrak{f}, \, A \in \mathfrak{f}_1.$$

\subsection{Rank Three Model Spaces and Compatible Lie Group Structures} \label{sec:CrossProduct}
Let $(M,H,g)$ be a sub-Riemannian model space where $H$ has rank three and let $\nabla$ be the canonical partial connection on $H$. Recall that Proposition~\ref{canonical_connection_theorem}~(c) gives a relation between the holonomy of $\nabla$ and compatible Lie group structures. However, for model spaces with rank three there is an additional relation between holonomy and compatible Lie group structures, which we will now describe.

One can easily show that the horizontal bundle $H$ of any model space is orientable. Any choice of orientation gives a corresponding cross product $\times: \wedge^2 H \to H$ on $H$ whenever the rank of $H$ is three. The choice of orientation only determines the sign of the cross product. Therefore, the collection of maps $v \wedge w\mapsto c (v \times w)$, $c \in \mathbb{R}$ does not depend on the choice of orientation.
Let $\Isom_0(M)$ denote the identity component of $\Isom(M)$. We say that a partial connection $\tilde \nabla$ is \emph{invariant under orientation-preserving isometries} if it satisfies \eqref{invariant} for any $\varphi \in \Isom_0(M)$. 
\begin{lemma} \label{lemma:CrossProduct}
Let $(M,H,g)$ be a sub-Riemannian model space with rank three. Consider a choice of cross product $\times$ on $H$, and denote by $\nabla$ the canonical partial connection on $H$. For a given point $x \in M$, define $K_0 \subseteq \Isom_0(M)$ as the isotropy group corresponding to $x$. Write the Lie algebras of $\Isom_0(M)$ and $K_0$ as respectively $\mathfrak{g}$ and $\mathfrak{k}$. Let $\mathfrak{p}$ be the subspace of $\mathfrak{g}$ corresponding to $\nabla$ as described in Proposition~\ref{canonical_connection_theorem}~{\rm (b)}. 
\begin{enumerate}[\rm (a)]
\item Any partial connection that is compatible with $g$ and invariant under orientation-preserving isometries is on the form
\begin{equation} \label{nablac} \nabla_X^c Y = \nabla_X Y + c X \times Y, \qquad X, Y \in \underline{H},\end{equation}
for some $c \in \mathbb{R}$.
\item Relative to our choice of cross product, there is a unique linear map $L: \mathfrak{p} \to \mathfrak{k}$ with the property
$$\pi_* [L(A), B] = \pi_* A \times \pi_* B, \quad A, B \in \mathfrak{p}.$$
Furthermore, this map is equivariant under the action of $K_0$. 
\item Define $\mathfrak{p}_c := \{ A + cL(A) \, | \, A \in \mathfrak{p} \}$. Assume that $\gamma: [t_0,t_1] \to M$ is a horizontal curve and $\varphi: [t_0,t_1] \to \Isom_0(M)$ is a curve above $\gamma$ such that \[d\ell_{\varphi(t)^{-1}}\left(\dot \varphi(t)\right) \in \mathfrak{p}_{c},\] for all $t \in [a,b]$. Then
\[X(t) = d\varphi(t) d\varphi(t_0)^{-1} v\]
is a $\nabla^c$-parallel vector field along $\gamma$ for any $v \in H_{\gamma(a)}$.
\item Fix $c \in \mathbb{R}$. The holonomy group of $\nabla^c$ is either trivial or isomorphic to $\SO(3)$. If the holonomy group is trivial, then $M$ has a Lie group structure with identity $x$ such that $(H,g)$ is left-invariant. Moreover, every orientation-preserving isometry is then a composition of a left-translation and a Lie group automorphism.
\end{enumerate}
\end{lemma}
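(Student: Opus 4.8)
The plan is to handle the four parts in order, relying throughout on the fact that for a rank-three model space the isotropy group $K_0$ acts on $H_x$ as the full group $\SO(H_x)\cong\SO(3)$; by Proposition~\ref{Regularity_of_isometries} the isotropy representation $\mathfrak{k}\to\so(H_x)$ is injective, and the dimension count $\dim\mathfrak{k}=\dim\mathfrak{g}-\dim M=3$ forces it to be an isomorphism onto $\so(H_x)$. For part (a), I would note that the difference $T(X,Y):=\tilde\nabla_XY-\nabla_XY$ of two connections compatible with $g$ is $C^\infty(M)$-linear in $X$ and, by compatibility, skew-symmetric in its last two arguments, so $T(X,\cdot)$ is a skew-symmetric endomorphism of $H$. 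On an oriented rank-three bundle every such endomorphism is $w\mapsto a\times w$ for a unique $a$, whence $T(X,Y)=A(X)\times Y$ for a bundle endomorphism $A$ of $H$. Evaluating the isometry-invariance of $T$ at $\varphi\in K_0$ and using that $d\varphi$ preserves $\times$ shows that $A$ commutes with the $\SO(3)$-action on $H_x$; since the standard representation is real-irreducible, Schur's lemma gives $A=c\,\id$, which is \eqref{nablac}.

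For part (b), observe that for $C\in\mathfrak{k}$ the map $B\mapsto\pi_*[C,B]$ on $\mathfrak{p}$ is precisely the isotropy action of $C$ on $H_x$, hence a skew-symmetric endomorphism, so $\pi_*[C,B]=a_C\times\pi_*B$ for a unique $a_C\in H_x$. Because the isotropy representation is an isomorphism, $C\mapsto a_C$ is a linear isomorphism $\mathfrak{k}\to H_x$; defining $L(A)$ to be the unique element of $\mathfrak{k}$ with $a_{L(A)}=\pi_*A$ yields the required identity together with the existence, uniqueness, and linearity of $L$. The $K_0$-equivariance then follows by a short computation from $\pi_*\circ\Ad(k)=(d_xk)\circ\pi_*$ and the fact that $d_xk\in\SO(3)$ preserves the cross product.

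For part (c), which I expect to be the main obstacle, the first step is to record that $\mathfrak{p}_c$ is again a $K_0$-invariant complement of $\mathfrak{k}$ in $\mathfrak{g}$: it is complementary since $\pi_*(A+cL(A))=\pi_*A$ identifies it with $H_x$, and it is $K_0$-invariant since $L$ is equivariant by (b). Running the argument of Proposition~\ref{canonical_connection_theorem}~(b) with $\mathfrak{p}_c$ in place of $\mathfrak{p}$, the prescription $X(t)=d\varphi(t)d\varphi(t_0)^{-1}v$ for lifts with $d\ell_{\varphi(t)^{-1}}(\dot\varphi(t))\in\mathfrak{p}_c$ defines parallel transport of a connection $\nabla'$ that is compatible with $g$ (the $d\varphi(t)$ act orthogonally on $H$) and invariant under orientation-preserving isometries, so by (a) we have $\nabla'=\nabla^{c'}$ for some $c'\in\mathbb{R}$. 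The remaining and delicate point is $c'=c$, where the defining property of $L$ does the work: writing a $\mathfrak{p}_c$-lift $\varphi$ over $\gamma$ as $\varphi=\psi k$ with $\psi$ a $\mathfrak{p}$-lift and $k(t)\in K_0$, matching $\mathfrak{k}$-components forces $d\ell_{k^{-1}}\dot k=cL(A)$, and by (b) the induced rotation $d_xk(t)$ on $H_x$ is generated by an infinitesimal rotation of the form $w\mapsto \pm c\,\dot\gamma\times w$. Comparing $X(t)=d\psi(t)\bigl(d_xk(t)\,v\bigr)$ with $\nabla$-parallel transport $d\psi(t)v$ then gives $\nabla_{\dot\gamma}X=-c\,\dot\gamma\times X$, i.e.\ $\nabla^c_{\dot\gamma}X=0$, so $c'=c$. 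The difficulty lies entirely in matching this principal-bundle description to the tensorial modification $c\,X\times Y$ while tracking the sign convention of the isotropy action through the gauge $k(t)$.

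For part (d), I would follow the proof of Proposition~\ref{canonical_connection_theorem}~(c) with $\mathfrak{p}$ replaced by $\mathfrak{p}_c$ and $G$ replaced by $\Isom_0(M)$. Compatibility with $g$ together with the realization of parallel transport by $d\varphi(t)$, $\varphi(t)\in\Isom_0(M)$, gives $\Hol^{\nabla^c}\subseteq\SO(H_x)\cong\SO(3)$; since $M$ is simply connected this group is connected, and invariance of $\nabla^c$ under $K_0$ makes it invariant under conjugation by all of $d_xK_0=\SO(3)$, hence a connected normal subgroup of $\SO(3)$. As $\so(3)$ is simple, the only possibilities are $\{e\}$ and $\SO(3)$. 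In the trivial case, flatness of $\nabla^c$ translates into $\mathfrak{p}_c$ being a subalgebra of $\mathfrak{g}$ complementary to $\mathfrak{k}$; the corresponding connected subgroup acts simply transitively on $M$ and endows it with a Lie group structure with identity $x$ for which $(H,g)$ is left-invariant and every orientation-preserving isometry is a left translation composed with an automorphism.
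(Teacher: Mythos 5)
Your overall architecture is sound, and for parts (a), (b) and (d) essentially complete: the difference-tensor-plus-Schur argument in (a) is correct (add one line extending $A_x = c\,\id$ from the fixed point to all of $M$ via transitivity of $\Isom_0(M)$ and invariance of the tensor), the identification in (b) of $B \mapsto \pi_*[C,B]$ with the isotropy action together with the isomorphism $\mathfrak{k} \simeq \so(3)$ gives existence, uniqueness and equivariance exactly as you say, and in (d) the ``connected subgroup of $\SO(3)$ invariant under conjugation by the full isotropy image'' argument is right, though ``acts simply transitively'' needs the extra sentence that the stabilizer $M' \cap K_0$ is discrete and killed by simple connectedness of $M$. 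Note that the paper itself contains no proof of this lemma --- it defers entirely to \cite{sub_Riemannian_Model_Spaces} --- so yours is necessarily an independent reconstruction; structurally it matches the reductive-complement machinery set up in Proposition~\ref{canonical_connection_theorem}.

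The genuine gap is precisely the step you flagged and then waved through in (c): the sign. You write that $d_xk(t)$ is generated by $w \mapsto \pm c\,\dot\gamma \times w$ and then assert the sign that makes $\nabla^c_{\dot\gamma}X = 0$. But the sign is forced by your own definitions, and carrying the computation out it comes out the other way. Factor $\varphi(t) = \psi(t)k(t)$ with $\psi$ a $\mathfrak{p}$-lift of $\gamma$, $k(t_0) = \id$; then $d\ell_{\varphi^{-1}}\dot\varphi = \Ad(k^{-1})A + d\ell_{k^{-1}}\dot k$ with $A := d\ell_{\psi^{-1}}\dot\psi \in \mathfrak{p}$, so membership in $\mathfrak{p}_c$ forces $d\ell_{k^{-1}}\dot k = cL(\Ad(k^{-1})A) = c\,\Ad(k^{-1})L(A)$ by equivariance. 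Writing $\rho$ for the isotropy representation, part (b) says $\rho(L(A))w = \pi_*A \times w$, and $\rho(\Ad(k^{-1})C) = d_xk^{-1}\rho(C)d_xk$ gives $\tfrac{d}{dt}d_xk(t) = c\,\rho(L(A))\,d_xk(t)$. Hence with $u := d\varphi(t_0)^{-1}v$, in the $\nabla$-parallel frame $E_w(t) = d\psi(t)w$ of Proposition~\ref{canonical_connection_theorem}~(b) one gets
\[ \nabla_{\dot\gamma}X = d\psi(t)\Big(\tfrac{d}{dt}\,d_xk(t)\,u\Big) = c\,d\psi(t)\big(\pi_*A \times d_xk(t)u\big) = c\,\dot\gamma \times X, \]
using that $d\psi(t)$ preserves $\times$ and $d\psi(t)\pi_*A(t) = \dot\gamma(t)$. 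So under the conventions fixed by the statement (left logarithmic derivative, the defining identity of $L$ in (b), the standard bracket on $\mathfrak{g}$), $\mathfrak{p}_c$-lifts produce $\nabla^{-c}$-parallel fields, opposite to what you assert; and this relative sign cannot be defined away by reorienting $H$, since flipping $\times$ flips $L$ and the $c$ in \eqref{nablac} simultaneously. The repair is harmless --- either pair $\mathfrak{p}_c$ with $\nabla^{-c}$ (equivalently replace $L$ by $-L$), and note that since $c$ ranges over all of $\mathbb{R}$ every downstream use is invariant under $c \mapsto -c$ (e.g.\ Lemma~\ref{lemma:HolCZero}, whose admissible values of $c$ come in $\pm$ pairs) --- but a proof cannot leave the ``$\pm$'' unresolved and then assert the favorable sign at exactly the step you yourself identified as the crux.
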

The proof of this result is given in \cite[Proposition~3.8]{sub_Riemannian_Model_Spaces}. Whenever the rank of $H$ is not equal to three, the canonical connection is the only compatible connection that is invariant under orientation-preserving isometries. In the Riemannian case, this difference is reflected in the fact that the three-dimensional spheres have invariant Lie group structures.

\section{Carnot Model Spaces and Equivariant Maps}
\label{sec: Model_spaces_classification}
Before turning to the classification of model spaces of step and rank three, we recall the classification of step two model spaces given in \cite{sub_Riemannian_Model_Spaces} for comparison: The only Carnot model space with step two and rank $n$ is the free nilpotent Lie group $\mathsf{F}[n,2]$. Those model spaces of step two that are not Carnot groups are the universal covering spaces of the isometry groups of the non-flat Riemannian model spaces with suitable structures. Hence model spaces with step two have free nilpotentization and compatible Lie group structures. Moreover, they are parametrized by a single parameter, namely the sectional curvature of the corresponding non-flat Riemannian model space. 

\subsection{Carnot Model Spaces}
\label{sec: Carnot_Model_Spaces}
We will now classify all Carnot model spaces with step and rank three. The result will be used as an invariant for general model spaces through Proposition \ref{Nilpotentization_Model_Space}. In what follows, if $q \mapsto \rho(q)$ is a representation of $\Ort(3)$ on the vector space $V$, we will write $\bar{V}$ for the representation $q \mapsto (\det q) \rho(q)$. Throughout the paper, if $V_1$ and $V_2$ are representations of a Lie group $K$, we use the notation $V_1 \simeq V_2$ to indicate that $V_1$ and $V_2$ are isomorphic as representations of $K$. 
Recall that \[\mathfrak{f} := \mathsf{f}[3,3] = \mathfrak{f}_1 \oplus \mathfrak{f}_2 \oplus \mathfrak{f}_3\] denotes the free nilpotent Lie algebra of step and rank three. It follows from Witt's formula~\eqref{Witts_Formula} that $\textrm{dim}(\mathfrak{f}_1) = \textrm{dim}(\mathfrak{f}_2) = 3$ and $\textrm{dim}(\mathfrak{f}_3) = 8$. \par We consider the representation of $\Ort(\mathfrak{f}_1)$ on $\mathfrak{f}$ induced by the standard representation of $\Ort(\mathfrak{f}_1)$ on $\mathfrak{f}_1$. Fixing an orthonormal basis $A_1,A_2,A_3$ for $\mathfrak{f}_1$ gives the basis $A_{12}, A_{23}, A_{31}$ for $\mathfrak{f}_2$ defined by $A_{ij} := [A_i, A_j]$. This identifies $O(\mathfrak{f}_1)$ with $\Ort(3)$ as Lie groups and hence also $\mathfrak{f}_1$ with $\mathbb{R}^3$ as representations of $\Ort(3)$. Moreover, we have the identifications \[\mathfrak{f}_2 \simeq \wedge^2 \mathbb{R}^3 \simeq \mathfrak{so}(3) \simeq \bar{\mathbb{R}}^3,\] where $\mathfrak{so}(3)$ denotes the Lie algebra of $\Ort(3)$. The isomorphism between $\mathfrak{f}_2$ and $\wedge^2 \mathbb{R}^3$ is given by the map $[A,B] \mapsto A \wedge B$ while the isomorphism between $\wedge^2 \mathbb{R}^3$ and $\mathfrak{so}(3)$ as representations of $\Ort(3)$ is well-known. Finally, the isomorphism between $\mathfrak{so}(3)$ and $\bar{\mathbb{R}}^3$ is given by the map $\star: \mathfrak{so}(3) \to \mathbb{R}^3$ that is defined by the identity \[Ax = (\star A) \times x, \quad A \in \mathfrak{so}(3), \, x \in \mathbb{R}^3.\] If $e_1$, $e_2$, $e_3$ denote the standard basis elements in $\mathbb{R}^3$ and we define $A_{ij} := e_j e_i^t - e_i e_j^t$ for $ i \neq j$, then $\star(A_{12}) = e_3,$ $\star(A_{23}) = e_1,$ and $\star(A_{31}) = e_2.$

Consider the morphism of representations 
\[
\mathfrak{f}_1 \otimes \mathfrak{f}_2 \simeq \mathbb{R}^3 \otimes \bar{\mathbb{R}}^3 \longrightarrow \mathfrak{f}_3, \qquad A \otimes B \longmapsto [A,B].\]
By the Jacobi identity of the Lie bracket, the kernel of the above map is spanned by the element \[A_1 \otimes A_{23} + A_2 \otimes A_{31} + A_3 \otimes A_{12}.\] Notice that $\Ort(3)$ acts on the traceless $3 \times 3$ matrices $\mathfrak{sl}(3,\mathbb{R})$ by conjugation. By using the notation $A_{i,jk} := [A_i, [A_j, A_k]]$ we can identify $\mathfrak{f}_3$ with the representation $\overline{\mathfrak{sl}(3,\mathbb{R})}$ through the map
$$\sum_{j=1}^3\left(a_{j,12} A_{j,12} + a_{j,23} A_{j,23} + a_{j,31} A_{j,31}\right) \longmapsto \left(\begin{array}{ccc}
a_{1,23} & a_{1,31} & a_{1,12} \\
a_{2,23} & a_{2,31} & a_{2,12} \\
a_{3,23} & a_{3,31} & a_{3,12}
 \end{array} \right).$$
It is straightforward to verify that the only sub-representations of $\mathfrak{sl}(3,\mathbb{R})$ are $\mathfrak{so}(3)$ and the space of all symmetric matrices with zero trace $\mathfrak{s}$.

In summary, we can describe $\mathfrak{f} = \mathsf{f}[3,3]$ as follows. Define the operation
\begin{equation}
\label{symmetrization_product}
    \odot:\mathbb{R}^3 \times \mathbb{R}^3 \longrightarrow \mathfrak{s}, \qquad (x,y) \longmapsto x \odot y := \frac{1}{2} (y x^t + x y^t) - \frac{\langle x, y \rangle}{3} I_{3 \times 3}.
\end{equation}
Then we can identify $\mathfrak{f}$ as the vector space $\mathbb{R}^3 \oplus \mathbb{R}^3 \oplus \mathfrak{s} \oplus \mathbb{R}^3$ with brackets,
\begin{align}
\label{nilpotent_bracket_free}
    \left [ \begin{pmatrix}
x_1 \\ y_1 \\ S_1 \\ z_1 
\end{pmatrix}, \begin{pmatrix}
x_2 \\ y_2 \\ S_2 \\ z_2 
\end{pmatrix} \right ] = \begin{pmatrix}
0 \\ x_1 \times x_2 \\ x_1 \odot y_2 - y_1 \odot x_2 \\ x_1 \times y_2 + y_1 \times x_2
\end{pmatrix},
\end{align}
for $x_j, y_j, z_j \in \mathbb{R}^3$, $S_j \in \mathfrak{s}$, $j=1,2.$
As a representation of $\Ort(3)$, $\mathfrak{f}$ is isomorphic to $\mathbb{R}^3 \oplus \bar{\mathbb{R}}^3 \oplus \bar{\mathfrak{s}} \oplus \mathbb{R}^3$ where the action is given by
$$
q \cdot \begin{pmatrix} x \\ y \\ S \\ z \end{pmatrix}  := d_{1}\varphi_q \begin{pmatrix} x \\ y \\ S \\ z \end{pmatrix} 
=
\begin{pmatrix} q x \\ (\det q) q y \\ (\det q) q Sq^{-1} \\ q z \end{pmatrix}, \qquad q \in \Ort(3), \, S \in \mathfrak{s}, \, x,y,z \in \mathbb{R}^3.$$
The sub-representations of $\mathfrak{f}$ that are also ideals and do not contain the center $\mathfrak{f}_3$ are given by
$$\mathfrak{a} = \left\{ (0, 0, 0, z)^t : z \in \mathbb{R}^3 \right\}, \qquad \mathfrak{b} = \left\{ (0, 0, S, 0)^t : S \in \mathfrak{s} \right\}.$$ Combining this with Proposition \ref{strategy_carnot_groups} gives the following result. 
\begin{theorem}
\label{Classification_Carnot_Spaces}
Let $(N, E, h)$ be a Carnot model space of step and rank three. Then the Lie algebra of $N$ is isomorphic to $\mathsf{f}[3,3]$, $\mathsf{f}[3,3]/\mathfrak{a}$, or $\mathsf{f}[3,3]/\mathfrak{b}$.
\end{theorem}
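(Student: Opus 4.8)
The plan is to combine Proposition~\ref{strategy_carnot_groups} with the representation-theoretic description of $\mathfrak{f} = \mathsf{f}[3,3]$ developed above. By that proposition, every Carnot model space of step and rank three has Lie algebra $\mathfrak{f}/\mathfrak{c}$ for some $\mathfrak{c} \subseteq \mathfrak{f}$ that is simultaneously an ideal, an $\Ort(3)$-sub-representation, and satisfies $\mathfrak{f}_3 \not\subseteq \mathfrak{c}$; conversely each such $\mathfrak{c}$ arises this way. Hence the whole theorem reduces to the claim that the only such $\mathfrak{c}$ are $0$, $\mathfrak{a}$, and $\mathfrak{b}$. First I would record that the $\Ort(3)$-action preserves the grading, so each layer projection $\pr_i : \mathfrak{f} \to \mathfrak{f}_i$ is equivariant and $\pr_i(\mathfrak{c})$ is a sub-representation of $\mathfrak{f}_i$.

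The core of the argument is to show $\mathfrak{c} \subseteq \mathfrak{f}_3$. Since $\mathfrak{f}_1 \simeq \mathbb{R}^3$ is irreducible, $\pr_1(\mathfrak{c})$ is either $0$ or all of $\mathfrak{f}_1$. If it were all of $\mathfrak{f}_1$, then for every $x \in \mathfrak{f}_1$ I could pick $u_x \in \mathfrak{c}$ with $\pr_1(u_x) = x$; because the $\mathfrak{f}_3$-components are central and the bracket \eqref{nilpotent_bracket_free} of two elements depends only on their $\mathfrak{f}_1$- and $\mathfrak{f}_2$-parts, the iterated bracket $[u_x,[u_{x'},u_{x''}]]$ equals the free triple bracket $[x,[x',x'']]$. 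As $\mathfrak{c}$ is an ideal, all these lie in $\mathfrak{c}$, and since $[\mathfrak{f}_1,[\mathfrak{f}_1,\mathfrak{f}_1]] = \mathfrak{f}_3$ in the free algebra, this forces $\mathfrak{f}_3 \subseteq \mathfrak{c}$, contradicting the hypothesis. Hence $\pr_1(\mathfrak{c}) = 0$ and $\mathfrak{c} \subseteq \mathfrak{f}_2 \oplus \mathfrak{f}_3$. Repeating the idea one layer up, $\mathfrak{f}_2 \simeq \bar{\mathbb{R}}^3$ is irreducible, so $\pr_2(\mathfrak{c})$ is $0$ or $\mathfrak{f}_2$; if it were $\mathfrak{f}_2$, then bracketing an element of $\mathfrak{c}$ with prescribed $\mathfrak{f}_2$-part against $\mathfrak{f}_1$ produces, via \eqref{nilpotent_bracket_free}, all of $[\mathfrak{f}_1,\mathfrak{f}_2] = \mathfrak{f}_3$ inside $\mathfrak{c}$, again a contradiction. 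Thus $\pr_2(\mathfrak{c}) = 0$ and $\mathfrak{c} \subseteq \mathfrak{f}_3$.

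It then remains to classify the proper sub-representations of $\mathfrak{f}_3$. Under the identifications above, $\mathfrak{f}_3 \simeq \overline{\mathfrak{sl}(3,\mathbb{R})} = \mathfrak{b} \oplus \mathfrak{a}$ splits as $\bar{\mathfrak{s}} \oplus \mathbb{R}^3$, a sum of two irreducibles of dimensions $5$ and $3$; being of different dimensions they are non-isomorphic, so the only proper sub-representations are $0$, $\mathfrak{a}$, and $\mathfrak{b}$. Each is contained in the center $\mathfrak{f}_3$ of $\mathfrak{f}$ and is therefore automatically an ideal. Feeding $\mathfrak{c} \in \{0, \mathfrak{a}, \mathfrak{b}\}$ back into Proposition~\ref{strategy_carnot_groups} yields the three Lie algebras $\mathsf{f}[3,3]$, $\mathsf{f}[3,3]/\mathfrak{a}$, and $\mathsf{f}[3,3]/\mathfrak{b}$, completing the proof.

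I expect the main obstacle to be the subtle interaction between the two hypotheses on $\mathfrak{c}$. Because $\mathfrak{f}_1$ and the summand $\mathfrak{a} \subseteq \mathfrak{f}_3$ are isomorphic as $\Ort(3)$-representations, a sub-representation need not respect the grading: there exist \emph{diagonal} $\Ort(3)$-invariant subspaces of $\mathfrak{f}_1 \oplus \mathfrak{a}$ that project onto all of $\mathfrak{f}_1$. Representation theory alone therefore does not pin down $\mathfrak{c}$; it is precisely the ideal condition that eliminates these diagonals, since bracketing the $\mathfrak{f}_1$-part of such an element produces $\mathfrak{f}_2$-components that escape the subspace. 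Keeping the two properties working in tandem—sub-representation to restrict the possibilities within each isotypic block, ideal to propagate any first- or second-layer content up to all of $\mathfrak{f}_3$—is the delicate point of the argument.
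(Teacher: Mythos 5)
Your proposal is correct and takes essentially the same route as the paper: reduce via Proposition~\ref{strategy_carnot_groups} to classifying the $\Ort(3)$-sub-representations of $\mathsf{f}[3,3]$ that are ideals not containing $\mathfrak{f}_3$, using the identification $\mathfrak{f} \simeq \mathbb{R}^3 \oplus \bar{\mathbb{R}}^3 \oplus \bar{\mathfrak{s}} \oplus \mathbb{R}^3$. The paper merely asserts that the only such subspaces are $0$, $\mathfrak{a}$, and $\mathfrak{b}$, while your layer-projection argument --- the ideal property propagating any $\mathfrak{f}_1$- or $\mathfrak{f}_2$-content up to all of $\mathfrak{f}_3$, and Schur's lemma ruling out diagonal subspaces arising from $\mathfrak{f}_1 \simeq \mathfrak{a}$ --- correctly supplies the verification the paper leaves implicit.
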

We will write $\mathsf{A}_{3,3}$ and $\mathsf{C}_{3,3}$ for the model spaces corresponding to respectively $\mathsf{f}[3,3]/\mathfrak{a}$ and $\mathsf{f}[3,3]/\mathfrak{b}$. We obtain from Proposition \ref{Nilpotentization_Model_Space} the following numerical consequence of Theorem \ref{Classification_Carnot_Spaces}.

\begin{corollary}
Let $(M,H,g)$ be a sub-Riemannian model space of step and rank three. Then the growth vector of $M$ is either $(3,6,9)$, $(3,6,11)$, or $(3,6,14)$.
\end{corollary}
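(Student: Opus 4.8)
The plan is to combine the two results just established: Proposition~\ref{Nilpotentization_Model_Space}, which reduces the determination of possible growth vectors to the Carnot case, and Theorem~\ref{Classification_Carnot_Spaces}, which lists the only three Carnot model spaces of step and rank three. First I would invoke Proposition~\ref{Nilpotentization_Model_Space}: since $(M,H,g)$ is a model space, its nilpotentization $\Nil(M)$ is a Carnot model space with the \emph{same} growth vector as $M$. Because $M$ has step and rank three, so does $\Nil(M)$, and hence by Theorem~\ref{Classification_Carnot_Spaces} the Lie algebra of $\Nil(M)$ is isomorphic to one of $\mathsf{f}[3,3]$, $\mathsf{f}[3,3]/\mathfrak{a}$, or $\mathsf{f}[3,3]/\mathfrak{b}$. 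It therefore suffices to read off the growth vector of each of these three stratified Lie algebras.

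The computation is a dimension count. Witt's formula~\eqref{Witts_Formula} gives $\dim\mathfrak{f}_1 = \dim\mathfrak{f}_2 = 3$ and $\dim\mathfrak{f}_3 = 8$, and the explicit model $\mathfrak{f} = \mathbb{R}^3 \oplus \mathbb{R}^3 \oplus \mathfrak{s} \oplus \mathbb{R}^3$ set up before the theorem identifies the top layer as $\mathfrak{f}_3 \cong \mathfrak{s} \oplus \mathbb{R}^3$, with $\dim\mathfrak{s} = 5$. Hence $\mathsf{f}[3,3]$ has growth vector $(3,\,3+3,\,3+3+8) = (3,6,14)$. The ideals $\mathfrak{a}$ and $\mathfrak{b}$ both lie entirely in the top layer $\mathfrak{f}_3$, so quotienting by either affects only the final entry $n_3$: since $\dim\mathfrak{a} = 3$ we obtain $n_3 = 14-3 = 11$ for $\mathsf{f}[3,3]/\mathfrak{a}$, giving $(3,6,11)$, and since $\dim\mathfrak{b} = \dim\mathfrak{s} = 5$ we obtain $n_3 = 14-5 = 9$ for $\mathsf{f}[3,3]/\mathfrak{b}$, giving $(3,6,9)$. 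These are precisely the three growth vectors in the statement.

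There is no serious obstacle here; the only point that needs care is confirming that passing to the quotient neither drops the step nor disturbs the lower layers. This is guaranteed by the hypothesis $\mathfrak{f}_3 \not\subseteq \mathfrak{a}$ in Proposition~\ref{strategy_carnot_groups}: since $\mathfrak{a}$ and $\mathfrak{b}$ are proper subspaces of the center $\mathfrak{f}_3$ and meet the lower layers trivially, the images of $\mathfrak{f}_1$ and $\mathfrak{f}_2$ in the quotient retain dimension $3$, the top layer remains nonzero, and the resulting algebra is still stratified of step three with the stated growth vector. Thus the three cases exhaust all possibilities, and the growth vector of $M$ must be one of $(3,6,9)$, $(3,6,11)$, or $(3,6,14)$, completing the proof.
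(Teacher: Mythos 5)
Your proof is correct and follows exactly the route the paper intends: the corollary is stated as an immediate consequence of combining Proposition~\ref{Nilpotentization_Model_Space} with Theorem~\ref{Classification_Carnot_Spaces}, and your dimension count ($\dim\mathfrak{a}=3$, $\dim\mathfrak{b}=\dim\mathfrak{s}=5$, Witt's formula for the layers of $\mathsf{f}[3,3]$) supplies precisely the arithmetic the paper leaves implicit. Your closing remark about the quotients preserving the lower layers and the step is a worthwhile added detail, not a deviation.
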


For the rest of the paper, we will use the following notation: Let $e_1, e_2, e_3$ denote the standard basis of $\mathbb{R}^3$. We introduce respectively symmetric and anti-symmetric matrices
\begin{equation}
\label{skew_symmetric matricies}
    S_{ij} = e_j e_i^t + e_i e_j^t, \qquad A_{ij} = e_j e_i^t - e_i e_j^t, \qquad i\neq j,
\end{equation}
and for $j =1, 2,3$, we define $D_j = e_j e_j^t$. The bracket relations of these matrices when $i$,$j$,$k$ are all different are
$$[A_{ij}, A_{jk}]  = -A_{ik}, \quad [A_{ij}, S_{jk}] =  - S_{ik} , \quad  [S_{ij}, S_{jk}] =  - A_{ik}, \quad [A_{ij} , S_{ij} ] = 2D_j - 2D_i,$$
and for any $s$,
$$[A_{ij}, D_{s}] =( \delta_{si} - \delta_{si}) S_{ij}, \qquad [S_{ij}, D_s ] = \delta_{si} A_{sj} + \delta_{sj} A_{si}, \quad [D_i, D_s] =0.$$

\subsection{Equivariant Maps Between Representations of $\Ort(3)$} \label{sec:Equivariant}
We will now provide a technical result regarding representation theory of $\Ort(3)$ that will be used in the proofs of Lemma \ref{A_33_Classification} and Theorem \ref{nilpotent_classification}. It concerns determining the possible equivariant bilinear maps between the representations on $\mathbb{R}^3$, $\bar{\mathbb{R}}^3$, and $\mathfrak{s}$ described in the previous section. This is equivalent to understanding invariant linear maps from their tensor products, and we have the following result.

\begin{lemma}
\label{All_The_Invariant_Maps}
There is a unique (up to scaling) non-zero $\Ort(3)$-equivariant map between the following representations:
\begin{align*}
\mathbb{R}^3 \otimes \mathbb{R}^3 & \longrightarrow \bar{\mathbb{R}}^3 \tag{M1}\label{M1}, & \quad (v,w) & \longmapsto v \times w, \\
\mathbb{R}^3 \otimes \mathbb{R}^3 & \longrightarrow \mathfrak{s} \tag{M2}\label{M2}, & \quad (v,w) & \longmapsto v \odot w, \\
\mathbb{R}^3 \otimes \mathfrak{s} & \longrightarrow \mathbb{R}^3 \tag{M3}\label{M3}, & \quad (v,A) & \longmapsto Av, \\
\mathbb{R}^3 \otimes \mathfrak{s} & \longrightarrow \bar{\mathfrak{s}} \tag{M4}\label{M4}, & \quad (v,A) & \longmapsto [\star^{-1} v,A], \\
\mathfrak{s} \otimes \mathfrak{s} & \longrightarrow \bar{\mathbb{R}}^3 \tag{M5}\label{M5}, & \quad (A,B) & \longmapsto \star [A,B].
\end{align*}
\end{lemma}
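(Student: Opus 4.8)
The plan is to invoke Schur's lemma. A non-zero $\Ort(3)$-equivariant bilinear map $V \times W \to U$ is the same thing as a non-zero element of $\Hom_{\Ort(3)}(V \otimes W, U)$, and when $U$ is irreducible this space has dimension equal to the multiplicity with which $U$ occurs in $V \otimes W$. So for each of (M1)--(M5) I would reduce the statement to showing that the target appears with multiplicity exactly one in the relevant tensor product, and then exhibit the displayed map to confirm that the resulting (at most one-dimensional) Hom-space is in fact non-zero.

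First I would set up the representation theory of $\Ort(3)$. Since $-I$ is central with $\det(-I) = -1$, one has $\Ort(3) \cong \SO(3) \times \{\pm I\}$, so every irreducible real representation is the tensor product of an irreducible $\SO(3)$-module $V_\ell$ (of dimension $2\ell+1$) with a character of $\{\pm I\}$; I record each such module by its $\SO(3)$-type $\ell$ together with its \emph{parity}, the sign by which $-I$ acts. All the $V_\ell$ are absolutely irreducible (of real type), so $\End_{\Ort(3)}(U) = \mathbb{R}$ for each irreducible $U$, which is exactly what makes ``unique up to a real scalar'' the correct formulation. The four building blocks are then identified as follows: $\mathbb{R}^3$ has type $1$ and parity $-$; $\bar{\mathbb{R}}^3$ has type $1$ and parity $+$ (the $\det$-twist cancels the action of $-I$); $\mathfrak{s}$ has type $2$ and parity $+$; and $\bar{\mathfrak{s}}$ has type $2$ and parity $-$. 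Each of the five targets is irreducible, as the Schur count requires.

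Next I would run the multiplicity computation using the Clebsch--Gordan rule for $\SO(3)$ together with multiplicativity of parity. For (M1) and (M2) the source has $\SO(3)$-decomposition $V_1 \otimes V_1 = V_0 \oplus V_1 \oplus V_2$ with overall parity $+$, so its type-$1$ piece carries parity $+$ (matching $\bar{\mathbb{R}}^3$, and explaining why the cross product is a pseudovector rather than landing in $\mathbb{R}^3$) and its type-$2$ piece carries parity $+$ (matching $\mathfrak{s}$), each appearing once. For (M3) and (M4) the source is $V_1 \otimes V_2 = V_1 \oplus V_2 \oplus V_3$ with parity $-$, so the type-$1$ summand matches $\mathbb{R}^3$ and the type-$2$ summand matches $\bar{\mathfrak{s}}$, again each with multiplicity one. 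For (M5) the source is $V_2 \otimes V_2 = V_0 \oplus V_1 \oplus V_2 \oplus V_3 \oplus V_4$ with parity $+$, whose type-$1$ summand matches $\bar{\mathbb{R}}^3$ with multiplicity one. In every case the target occurs exactly once, so $\dim \Hom_{\Ort(3)}(V \otimes W, U) \leq 1$.

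Finally I would check that each displayed map is genuinely $\Ort(3)$-equivariant and non-zero, upgrading the inequality to an equality and pinning down the unique map up to scaling. Equivariance is a short direct verification in each case, using $q^{-1} = q^t$ and the rule $\star(qBq^{-1}) = (\det q)\, q(\star B)$ for the Hodge-type isomorphism; for instance $(qv)\times(qw) = (\det q)\, q(v \times w)$ exhibits the $\bar{\mathbb{R}}^3$-equivariance in (M1), and one verifies that $[\star^{-1}v, A]$ and $[A,B]$ are respectively symmetric and skew-symmetric so that (M4) and (M5) indeed have the stated codomains. Non-vanishing is immediate on basis elements. I expect the only genuine subtlety to be the parity bookkeeping: the content of the lemma is precisely that the $\det$-twists force the targets to be the barred rather than the unbarred modules (and conversely), so the one place demanding care is tracking the sign of the $-I$-action through each tensor product and through each explicit map.
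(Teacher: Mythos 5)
Your proposal is correct, but it takes a genuinely different route from the paper's proof. You argue via the global structure theory: $\Ort(3)\cong \SO(3)\times\{\pm I\}$, the classification of real irreducibles of $\SO(3)$ as the modules $V_\ell$ (all of real type, so $\End_{\Ort(3)}(U)=\mathbb{R}$ and Schur counting is valid over $\mathbb{R}$), and the Clebsch--Gordan rule $V_\ell\otimes V_m\simeq V_{|\ell-m|}\oplus\cdots\oplus V_{\ell+m}$ together with parity bookkeeping, which gives $\dim \Hom_{\Ort(3)}(V\otimes W,U)\leq 1$ uniformly in all five cases; the displayed maps then settle nonvanishing. The paper never invokes Clebsch--Gordan: for \eqref{M1}--\eqref{M2} it uses only the concrete decomposition $\mathbb{R}^3\otimes\mathbb{R}^3\simeq \mathfrak{s}\oplus\mathfrak{so}(3)\oplus\mathbb{R}I_{3\times 3}$ and irreducibility of the two nontrivial summands; for \eqref{M3} a duality trick (equivariant maps $\mathfrak{s}\to\mathbb{R}^3\otimes\mathbb{R}^3$ are multiples of the inclusion); and for \eqref{M4}--\eqref{M5} it complexifies and performs an explicit weight-space analysis, computing the eigenvalue decompositions of $\star^{-1}e_1$, $\star^{-1}e_2$, $\star^{-1}e_3$ acting on $\mathfrak{s}$ and tabulating where an equivariant $L$ can send pairs of eigenvectors until $L$ is pinned down to a single constant $c$ and identified with $(A,B)\mapsto c\star[A,B]$. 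Your argument buys uniformity, brevity, and a conceptual explanation of precisely why the $\det$-twists occur (the parity of each Clebsch--Gordan summand); the paper's computation buys self-containedness, being in effect a hands-on verification of the same multiplicity-one statements using nothing beyond linear algebra. One point to make explicit if you write yours up: Clebsch--Gordan is usually stated for complex representations, so you should note that the decomposition descends to the real forms because each $V_\ell$ is absolutely irreducible --- a fact you implicitly rely on when counting real Hom-spaces.
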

Notice that $V_1 \otimes V_2 \simeq \bar{V}_1 \otimes \bar{V}_2$ and $\bar{V}_1 \otimes V_2 \simeq V_1 \otimes \bar{V}_2 \simeq \overline{V_1 \otimes V_2}$ hold for any representations $V_1$ and $V_2$ of $\Ort(3)$. Determining all equivariant maps from $V_1 \otimes V_2$ to a representation $W$ thus also determines all equivariant maps from $\bar{V}_1 \otimes V_2$ to~$\bar{W}$.
\begin{proof}
The cases \eqref{M1} and \eqref{M2} are determined by the decomposition \[\mathbb{R}^3 \otimes \mathbb{R}^3 \simeq \mathfrak{s} \oplus \mathfrak{so}(3) \oplus \mathbb{R} I_{3 \times 3},\] and by recalling that the representations of $\Ort(3)$ on $\mathfrak{so}(3) \simeq \bar{\mathbb{R}}^3$ and $\mathfrak{s}$ are both irreducible. For the case \eqref{M3}, we note that since we are considering representations of $\Ort(3)$, such maps are equivalent to considering equivariant maps $\mathfrak{s} \to \mathbb{R}^3 \otimes \mathbb{R}^3$. The only non-zero maps in the latter mentioned class are constant multiples of the inclusion.

For the remaining two cases, we will only prove \eqref{M5} as the proof of \eqref{M4} is similar. Let us consider an equivariant map $L : \mathfrak{s} \otimes \mathfrak{s} \to \bar{\mathbb{R}}^3$ that is $O(3)$-equivariant and hence $\mathfrak{so}(3)$-equivariant. This implies that
$$v \times L(S, T) = L([\star^{-1} v, S], T) + L(S, [\star^{-1} v, T]), \quad v \in \mathbb{R}^3, \, S,T \in \mathfrak{s}.$$
We look for the eigenvalue decomposition of maps on the form $x \mapsto v \times x$ for $v \in \mathbb{R}^3$. Complexity the representations and extend $L$ by linearity, so that we have well defined eigenspaces. The map $x\mapsto e_1\times x$ decomposes $\mathbb{C}^3$ into respectively a $0$, $-i$, and $i$ eigenspace given by
$$\mathbb{C}^3 = \mathfrak{a}_0 \oplus \mathfrak{a}_{i} \oplus \mathfrak{a}_{-i} = \spn_{\mathbb{C}} \{ e_1 \} \oplus \spn_{\mathbb{C}} \{ e_2 - i e_3\} \oplus \spn_{\mathbb{C}} \{ e_2 + i e_3 \}.$$
We have a similar decomposition of $\mathfrak{s} \otimes \mathfrak{s}$ up to the kernel of $L$. Let $E(\lambda)$ denote the eigenspace corresponding to $\lambda \in \mathbb{C}$ for the action of $\star^{-1} e_1$ on $\mathfrak{s}$. Then we obtain, 
\begin{align*}
D_2 - D_3 + i S_{23} \in & E(-2i), \quad S_{12}+ i S_{13} \in E(-i), \quad 2 D_1 - D_2 - D_3 \in E(0), \\ & S_{12}- i S_{13} \in E(i), \quad D_2 - D_3 - i S_{23} \in E(2i).
\end{align*}

Hence, for the action of $\star^{-1} e_1$ on $\mathfrak{s} \otimes \mathfrak{s}$, if $S_{a_j} \in \mathfrak{s}$ have eigenvalue $a_j$, $j=1,2$, then $S_{a_1} \otimes S_{a_2}$ has eigenvalue $a_1 +a_2$. We can collect the image of $L$ on the different eigenspaces of $\mathfrak{s} \otimes \mathfrak{s}$ in the following table
$$ \tiny
\begin{array}{l|c|c|c|c|c|c|c|c|c|}
L & D_2 - D_3 + i S_{23} & S_{12}+ i S_{13} & 2 D_1 - D_2 - D_3 & S_{12}- i S_{13} & D_2 - D_3 - i S_{23} \\ \hline
D_2 - D_3 + i S_{23}  & 0 & 0 & 0 & \mathfrak{a}_{-1} & \mathfrak{a}_0 \\ 
S_{12}+ i S_{13} & 0 & 0 & \mathfrak{a}_{-1} & \mathfrak{a}_0 & \mathfrak{a}_{1}  \\
2 D_1 - D_2 - D_3 & 0 & \mathfrak{a}_{-1} & \mathfrak{a}_0 & \mathfrak{a}_{1} & 0 \\
S_{12}- i S_{13} &  \mathfrak{a}_{-1} & \mathfrak{a}_0 & \mathfrak{a}_{1} & 0 & 0 \\
D_2 - D_3 - i S_{23} &  \mathfrak{a}_0 & \mathfrak{a}_{1} & 0 & 0 & 0
\end{array}$$
Similarly, we can consider the action of $\star^{-1} e_2$ and $\star^{-1} e_3$ with respective eigenvalue decomposition $\mathfrak{b}_0 \oplus \mathfrak{b}_i \oplus \mathfrak{b}_{-i}$ and $\mathfrak{c}_0 \oplus \mathfrak{c}_i \oplus \mathfrak{c}_{-i}$ and obtain
$$\tiny
\begin{array}{l|c|c|c|c|c|c|c|c|c|}
L & D_3 - D_1 + i S_{13} & S_{23}+ i S_{12} & 2 D_2 - D_1 - D_3 & S_{23}- i S_{12} & D_3 - D_1 - i S_{13} \\ \hline
D_3 - D_1 + i S_{13}  & 0 & 0 & 0 & \mathfrak{b}_{-1} & \mathfrak{b}_0 \\ 
S_{23}+ i S_{12} & 0 & 0 & \mathfrak{b}_{-1} & \mathfrak{b}_0 & \mathfrak{b}_{1}  \\
2 D_2 - D_1 - D_3 & 0 & \mathfrak{b}_{-1} & \mathfrak{b}_0 & \mathfrak{b}_{1} & 0 \\
S_{23}- i S_{12} &  \mathfrak{b}_{-1} & \mathfrak{b}_0 & \mathfrak{b}_{1} & 0 & 0 \\
D_3 - D_1 - i S_{13} &  \mathfrak{b}_0 & \mathfrak{b}_{1} & 0 & 0 & 0
\end{array}$$
$$ \tiny
\begin{array}{l|c|c|c|c|c|c|c|c|c|}
L & D_1 - D_2 + i S_{12} & S_{13}+ i S_{23} & 2 D_3 - D_1 - D_2 & S_{13}- i S_{23} & D_1 - D_2 - i S_{12} \\ \hline
D_1 - D_2 + i S_{12}  & 0 & 0 & 0 & \mathfrak{c}_{-1} & \mathfrak{c}_0 \\ 
S_{13}+ i S_{23} & 0 & 0 & \mathfrak{c}_{-1} & \mathfrak{c}_0 & \mathfrak{c}_{1}  \\
2 D_3 - D_1 - D_2 & 0 & \mathfrak{c}_{-1} & \mathfrak{c}_0 & \mathfrak{c}_{1} & 0 \\
S_{13}- i S_{23} &  \mathfrak{c}_{-1} & \mathfrak{c}_0 & \mathfrak{c}_{1} & 0 & 0 \\
D_1 - D_2 - i S_{12} &  \mathfrak{c}_0 & \mathfrak{c}_{1} & 0 & 0 & 0
\end{array}$$

In summary, if we write $\mathfrak{e}_j = \spn \{ e_j\}$ for $j=1,2,3$ we have
$$
\begin{array}{l|c|c|c|c|c|c|c|c|c|}
L & S_{12} & S_{23}  & S_{13} & D_1-D_2 & D_2 - D_3 \\ \hline
S_{12} & 0 & \mathfrak{e}_2 & \mathfrak{e}_1 & \mathfrak{e}_3 & \mathfrak{e}_3 \\ 
S_{23} & \mathfrak{e}_2 & 0 & \mathfrak{e}_3 & \mathfrak{e}_1 &  \mathfrak{e}_1 \\
S_{13} & \mathfrak{e}_1 & \mathfrak{e}_3 & 0 & \mathfrak{e}_2 & \mathfrak{e}_2 \\
D_1 - D_2 & \mathfrak{e}_3 & \mathfrak{e}_1 & \mathfrak{e}_2 & 0 &  0 \\
D_2 -D_3 & \mathfrak{e}_3 & \mathfrak{e}_1 & \mathfrak{e}_2 & 0 & 0
\end{array}$$
Defining $c \in \mathbb{R}$ by $L(S_{12}, S_{23}) = c e_2$ and using the action of $\mathfrak{so}(3)$, we end up with
$$
\begin{array}{l|c|c|c|c|c|c|c|c|c|}
L & S_{12} & S_{23}  & S_{13} & D_1-D_2 & D_2 - D_3 \\ \hline
S_{12} & 0 & ce_2 & -c e_1 &  2c e_3 & - c e_3 \\ 
S_{23} & - ce_2 & 0 & c e_3 & - c e_1 & 2 c e_1 \\
S_{13} & c e_1 & - c e_3 & 0 & - c e_2 & - c e_2 \\
D_1 - D_2 & -2 ce_3 & c e_1 & c e_2 & 0 &  0 \\
D_2 -D_3 & c e_3 & - 2 c e_1 & c e_2 & 0 & 0
\end{array}$$
A straightforward calculation shows that this is precisely the map \[\mathfrak{s} \otimes \mathfrak{s} \ni (A,B) \longmapsto c \star[A,B] \in \bar{\mathbb{R}}^3.\]
\end{proof}

\section{Model spaces with nilpotentization $\mathsf{C}_{3,3}$}
\label{Sec: C33}
In this section we will focus on describing the homogeneous structure of the sub-Riemannian model spaces with nilpotentization $\mathsf{C}_{3,3}$. These spaces have been studied in \cite[Theorem 6.1]{sub_Riemannian_Model_Spaces}, where the following result was proven. 

\begin{theorem}
\label{C_33_Classification}
Let $(M,H,g)$ be a sub-Riemannian model space whose nilpotentization is isometric to $\mathsf{C}_{3,3}$ and fix $x \in M$. Then $M$ is isometric to $\mathsf{C}_{3,3}(a_1,a_2)$ for a unique element $(a_1,a_2) \in \mathbb{R}^2$, where $\mathsf{C}_{3,3}(a_1,a_2)$ is a model space with the following description: The Lie algebra $\mathfrak{g}$ of $G = \Isom\left(\mathsf{C}_{3,3}(a_1,a_2)\right)$ has the identification as a representation
\[\mathfrak{g} \simeq \mathbb{R}^3 \oplus \bar{\mathbb{R}}^3 \oplus \mathbb{R}^3 \oplus \bar{\mathbb{R}}^3,\]
where the last $\mathbb{R}^3$-term is identified with the Lie algebra of the isotropy group at $x$. The Lie bracket between elements in $\mathfrak{g}$ is given by
\begin{align} \label{BracketsC33} \left[\begin{pmatrix} x_1 \\ y_1 \\ z_1 \\ w_1 \end{pmatrix},\begin{pmatrix} x_2 \\ y_2 \\ z_2 \\ w_2 \end{pmatrix}\right]
& = \begin{pmatrix}
w_1 \times x_2 + x_1 \times w_2 \\
w_1 \times y_2 + y_1 \times w_2   \\
w_1 \times z_2 + z_1 \times w_2 \\
w_1 \times w_2 \end{pmatrix}
+\begin{pmatrix}
0 \\ x_1 \times x_2  \\ y_1 \times x_2 + x_1 \times y_2 \\ 0 \end{pmatrix}
\\ \nonumber & \qquad + a_1 \begin{pmatrix}
0 \\ x_1 \times z_2 + z_1 \times x_2 + y_1 \times y_2 + a_1 z_1 \times z_2 \\
y_1 \times z_2 + z_1 \times y_2 \\ 0 \end{pmatrix}
\\ \nonumber
& \qquad + a_2 \begin{pmatrix}y_1 \times z_2 + z_1 \times y_2 \\ z_1 \times z_2  \\
0 \\ x_1 \times z_2 + z_1 \times x_2 + y_1 \times y_2 + a_1 z_1 \times z_2 \end{pmatrix}.
\end{align}
The spaces $\mathsf{C}_{3,3}(a_1,a_2)$ for $(a_1,a_2) \in \mathbb{R}^2$ form a non-isometric family of sub-Riemannian model spaces, implying that $(M,H,g)$ is uniquely determined by the numbers $a_1$ and $a_2$. Finally, for any positive constant $\lambda > 0$, we note that \[\lambda \cdot \mathsf{C}_{3,3}(a_1,a_2) = \mathsf{C}_{3,3}\left(a_1/\lambda^2, a_2/\lambda^4\right).\]
\end{theorem}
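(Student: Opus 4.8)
The plan is to reduce the entire statement to a representation-theoretic computation on the isometry Lie algebra, using the machinery of Section~\ref{Sec: SubGeo} together with the equivariant-map classification of Section~\ref{sec:Equivariant}.

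First I would pin down $\mathfrak{g} = \mathrm{Lie}(G)$ as a representation of the isotropy group. Since $M$ is a model space with growth vector $(3,6,9)$, Proposition~\ref{Regularity_of_isometries} gives $\dim G = 9 + \binom{3}{2} = 12$, and each element of the isotropy group $K$ is determined by its restriction to $H_x$, so $K$ embeds into $\Ort(H_x) \cong \Ort(3)$ as a three-dimensional subgroup. The model-space axiom makes every linear isometry of $H_x$ integrable, so in fact $K \cong \Ort(3)$ with $K^\circ = \SO(3)$. As $K$ is compact, the $K$-invariant filtration $H_x \subseteq H_x^2 \subseteq T_xM$ splits equivariantly, and its associated graded is $\Nil(M) = \mathsf{C}_{3,3} = \mathsf{f}[3,3]/\mathfrak{b}$. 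Reading off the layers from Section~\ref{sec: Carnot_Model_Spaces}, namely $\mathbb{R}^3 \oplus \bar{\mathbb{R}}^3 \oplus \mathbb{R}^3$, and adjoining $\mathfrak{k} \cong \bar{\mathbb{R}}^3$, yields the claimed identification $\mathfrak{g} \simeq \mathbb{R}^3 \oplus \bar{\mathbb{R}}^3 \oplus \mathbb{R}^3 \oplus \bar{\mathbb{R}}^3$, with the last summand the isotropy algebra.

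Next I would parametrize every admissible Lie bracket on this representation. The bracket is $\Ort(3)$-equivariant, so each of its components between the four irreducible summands is an equivariant bilinear map; by Lemma~\ref{All_The_Invariant_Maps}, together with the duality recorded there, each such component is a scalar multiple of a cross product, while the maps into the wrong isotypic slots are forced to vanish for lack of any equivariant map. The requirement that $w \in \mathfrak{k}$ act by the $\mathfrak{so}(3)$-action fixes the isotropy brackets (the first vector in \eqref{BracketsC33}), and the requirement that the associated graded equal $\mathsf{C}_{3,3}$ fixes the top-degree nilpotent brackets (the second vector). What remains is a finite list of scalars attached to the filtration-lowering brackets, which I would then constrain by the Jacobi identity. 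Running the Jacobi relations through all triples of summands collapses these scalars to exactly two free parameters $a_1, a_2$ and reproduces \eqref{BracketsC33}; I expect this Jacobi bookkeeping to be the principal obstacle, being a large but rigidly structured system whose many equations are forced to be consistent by equivariance.

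Finally I would settle existence, uniqueness, non-isometry, and scaling. Existence both ways follows from the previous step: every $M$ with nilpotentization $\mathsf{C}_{3,3}$ produces some $(a_1,a_2)$, and conversely each bracket \eqref{BracketsC33} satisfies Jacobi and, via Proposition~\ref{canonical_connection_theorem}~(d), equips $G/K$ with a genuine model-space structure. For uniqueness and pairwise non-isometry I would invoke the diagram preceding Lemma~\ref{A_33_Classification}: an isometry $\Phi \colon \mathsf{C}_{3,3}(a_1,a_2) \to \mathsf{C}_{3,3}(a_1',a_2')$ induces a Lie algebra isomorphism $\psi = d\bar{\Phi}$ carrying $\mathfrak{k}$ to $\hat{\mathfrak{k}}$ and $\mathfrak{p}$ isometrically onto $\hat{\mathfrak{p}}$, and which is $\Ort(3)$-equivariant. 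Because $\mathfrak{p} \cong \mathbb{R}^3$ generates the nilpotent filtration, $\psi$ is determined by $\psi|_{\mathfrak{p}}$; Schur's lemma on the absolutely irreducible summands together with the isometry constraint forces $\psi|_{\mathfrak{p}} = \pm \id$, and propagating this through the fixed cross-product brackets gives $\psi = \id$ on the second layer. Comparing the $a_1$- and $a_2$-brackets then yields $a_1 = a_1'$ and $a_2 = a_2'$, so $(a_1,a_2)$ is a complete isometry invariant. For the scaling law I would assign $x, y, z, \mathfrak{k}$ the homogeneity degrees $1, 2, 3, 0$ and observe that \eqref{BracketsC33} is homogeneous precisely when $a_1$ and $a_2$ carry degrees $-2$ and $-4$; since scaling the metric by $\lambda$ is realized by the dilation multiplying each degree-$d$ vector by $\lambda^d$, this isomorphism sends $\mathsf{C}_{3,3}(a_1,a_2)$ to $\mathsf{C}_{3,3}(a_1/\lambda^2, a_2/\lambda^4)$, which is the asserted identity.
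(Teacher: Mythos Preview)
The paper does not prove Theorem~\ref{C_33_Classification}; it quotes the result from \cite[Theorem~6.1]{sub_Riemannian_Model_Spaces}. Your outline nonetheless matches exactly the template the present paper writes out for the parallel classifications in Lemma~\ref{A_33_Classification} and Theorem~\ref{nilpotent_classification}: decompose $\mathfrak{g}$ into $\Ort(3)$-irreducibles using $\mathfrak{p}$ and iterated brackets, enumerate the equivariant bilinear components via Lemma~\ref{All_The_Invariant_Maps}, impose Jacobi, and extract uniqueness from the induced isomorphism $\psi = d\bar\Phi$. So the strategy is the intended one.

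One step needs more care than you give it. In the $\mathsf{C}_{3,3}$ case the first and third graded pieces are \emph{isomorphic} $\Ort(3)$-representations (both the standard $\mathbb{R}^3$), so ``the $K$-invariant filtration splits equivariantly'' does not pick out a preferred third summand: inside $\mathfrak{g}^-$ there is a one-parameter family of $K$-invariant complements to $\mathfrak{a}_1 = \mathfrak{p}$, and different choices shift the structure constants you are about to compute. The formula \eqref{BracketsC33} is written in the normalization $\mathfrak{a}_3 := [\mathfrak{a}_1,\mathfrak{a}_2]$; one checks this really is a complement to $\mathfrak{a}_1$ because its image under $d\pi$ must hit $T_xM/H_x^2$, and this is exactly the device the paper uses in the proof of Theorem~\ref{nilpotent_classification}. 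With that choice the decomposition is canonical, $\psi$ automatically sends $\mathfrak{a}_3$ to $\hat{\mathfrak{a}}_3$, and your Schur-then-propagate uniqueness argument goes through cleanly. Without pinning down $\mathfrak{a}_3$ you would carry a spurious gauge parameter through the Jacobi analysis, and the assertion that $(a_1,a_2)$ is an isometry invariant would not yet be well-posed.
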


Although Theorem \ref{C_33_Classification} classifies all sub-Riemannian model spaces whose nilpotentization is isometric to $\mathsf{C}_{3,3}$, the description is cumbersome and hides the homogeneous structure. In the following theorem we remedy this and provide explicit descriptions in terms of familiar spaces.
\begin{theorem} \label{th:AllC33} 
\begin{enumerate}[\rm (a)]
\item Assume either that $a_2 \geq 0$ or that both $a_2 < 0$ and $2 \sqrt{|a_2|} \leq a_1$. Then $M = \mathsf{C}_{3,3}(a_1,a_2)$ is a connected and simply connected Lie group with Lie algebra $\mathfrak{m}$ and with a sub-Riemannian structure $(H,g)$ given by left translation of $\mathfrak{h} = \{ A_x\}_{x \in \mathbb{R}^3}$ with inner product $\langle A_x , A_y \rangle = \langle x, y \rangle$, where $\mathfrak{m}$ and $A_x$ are found in Table~1. Furthermore, in all of these cases, all orientation preserving isometries are a composition of a left translation and a Lie group isomorphism.
\item Assume that both $a_2 < 0$ and $a_1 < 2 \sqrt{|a_2|}$. Then $M  = \mathsf{C}_{3,3}(a_1,a_2) = G/K$, where $G$ is the isometry group of $M$ with Lie algebra $\mathfrak{g}$ and $K$ is the isotropy group of a point $x \in M$ with Lie algebra $\mathfrak{k} = \{ C_w \}_{w \in \mathbb{R}^3}$. Furthermore, the sub-Riemannian structure $(H , g)$ on $M$ is the projection of a sub-Riemannian structure on $G$ defined by left translation of $\mathfrak{p} = \{ B_x\}_{x \in \mathbb{R}^3}$ with inner product $\langle B_x, B_y \rangle = \langle x,y \rangle$. Here, $\mathfrak{g}$ and the elements $B_x$ and $C_w$ are as in Table~2. \end{enumerate}
\end{theorem}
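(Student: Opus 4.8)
The plan is to read both statements off the holonomy dichotomy in Lemma~\ref{lemma:CrossProduct}: by part~(d) of that lemma, $M$ carries a left-invariant Lie group structure whose orientation-preserving isometries are left translations composed with automorphisms precisely when the connection $\nabla^c$ from \eqref{nablac} is flat for \emph{some} $c \in \mathbb{R}$, and otherwise $\Hol^{\nabla^c} \cong \SO(3)$ for every $c$. Using the explicit bracket \eqref{BracketsC33} of $\mathfrak{g}$ supplied by Theorem~\ref{C_33_Classification}, I would identify the isotropy algebra $\mathfrak{k} = \{(0,0,0,w)\}$ and the degree-one space $\mathfrak{p} = \{(x,0,0,0)\}$ (which is $K_0$-invariant and mapped isomorphically onto $H_x$ by $d\pi$, hence is the canonical $\mathfrak{p}$ of Proposition~\ref{canonical_connection_theorem}~(b)). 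The map $L$ of Lemma~\ref{lemma:CrossProduct}~(b) is then computed directly from its defining identity $\pi_*[L(A),B] = \pi_*A \times \pi_* B$: reading off the first component of \eqref{BracketsC33} gives $L((x,0,0,0)) = (0,0,0,x)$, so that $\mathfrak{p}_c = \{(x,0,0,cx) : x \in \mathbb{R}^3\}$.

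The core computation is to iterate the bracket \eqref{BracketsC33} on $\mathfrak{p}_c$. I expect to find that $[\mathfrak{p}_c, \mathfrak{p}_c]$ and $[\mathfrak{p}_c, [\mathfrak{p}_c, \mathfrak{p}_c]]$ are again three-dimensional, spanned by $Q_u := (2cu, u, 0, c^2 u)$ and $R_v := (3c^2 v, 3cv, v, c^3 v)$ respectively, and that the nine vectors $\{P_x, Q_u, R_v\}$ (with $P_x := (x,0,0,cx)$) always span a subspace transverse to $\mathfrak{k}$; the transversality is immediate by reading off the $z$-, then $y$-, then $x$-components of a vanishing combination. Thus flatness of $\nabla^c$ reduces to whether this nine-dimensional space is closed under the bracket. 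Computing $[Q_u, Q_{u'}]$ and, as a consistency check, $[P_x, R_v]$, and then demanding that the $\mathfrak{k}$-component relative to the splitting $\mathfrak{g} = \spn\{P,Q,R\} \oplus \mathfrak{k}$ vanish, both yield the single scalar condition
\[ c^4 - a_1 c^2 - a_2 = 0. \]

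Since $\mathfrak{k} \cong \so(3)$ is $K_0$-irreducible, any nonzero leak of a bracket into $\mathfrak{k}$ forces the generated subalgebra to contain all of $\mathfrak{k}$, hence to equal $\mathfrak{g}$, giving $\Hol^{\nabla^c} \cong \SO(3)$; so the displayed polynomial having a real root is both necessary and sufficient for flatness. Substituting $s = c^2 \geq 0$ turns it into $s^2 - a_1 s - a_2 = 0$, whose roots have product $-a_2$ and sum $a_1$; a non-negative real root therefore exists exactly when $a_2 \geq 0$, or when $a_2 < 0$ together with $a_1 \geq 2\sqrt{|a_2|}$. This is precisely the dichotomy separating case~(a) from case~(b). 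In case~(a), fixing such a $c$ produces a nine-dimensional subalgebra $\mathfrak{m}_c$ transverse to $\mathfrak{k}$; its connected subgroup acts simply transitively on the simply connected $M$, identifying $M$ with a Lie group of Lie algebra $\mathfrak{m} \cong \mathfrak{m}_c$ whose horizontal space is the left translation of $\mathfrak{p}_c$. Reading off the structure constants of $\{P_x,Q_u,R_v\}$ and taking $A_x$ to be the image of $P_x$ yields Table~1, while Lemma~\ref{lemma:CrossProduct}~(d) gives the isometry statement. In case~(b) no real $c$ solves the polynomial, so $\Hol^{\nabla^c}\cong\SO(3)$ for every $c$ and no such Lie group structure exists; here I would simply record the reductive presentation $M = G/K$ by rewriting \eqref{BracketsC33} in a convenient basis $\mathfrak{p} = \{B_x\}$, $\mathfrak{k} = \{C_w\}$, producing Table~2.

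The main obstacle is twofold. First, I must be certain that $c^4 - a_1 c^2 - a_2 = 0$ is the \emph{complete} closure obstruction; I would therefore also evaluate the remaining brackets $[Q,R]$ and $[R,R]$ to confirm they impose no new constraint, reducing the labour by invoking $K_0$-equivariance to treat a single representative of each bracket type. Second, assembling the explicit Lie algebras $\mathfrak{m}$ for Table~1 — in particular choosing a normal form that transparently exhibits the compact versus non-compact behaviour as the admissible root and the sign of $c$ vary — together with the reductive data of Table~2, is where the bookkeeping is heaviest, even though each entry follows directly from the bracket computations above.
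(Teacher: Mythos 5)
Your first half is correct, and it is in substance identical to the paper's own Lemma~\ref{lemma:HolCZero}: there the brackets of $\mathfrak{p}_c = \{(x,0,0,cx)\}$ are iterated in exactly the elements you call $P_x$, $Q_u = (2cu,u,0,c^2u)$ and $R_v = (3c^2v,3cv,v,c^3v)$ (the paper's primed coordinates), the only leak into $\mathfrak{k}$ is $(-c^4 + a_1c^2 + a_2)\, x \times z$, and the admissibility condition $c^2 = \tfrac{a_1}{2} \pm \tfrac{1}{2}\sqrt{a_1^2+4a_2} \geq 0$ reproduces your dichotomy ($a_2 \geq 0$, or $a_2 < 0$ with $a_1 \geq 2\sqrt{|a_2|}$). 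Your identification of $L$, the transversality check on $\spn\{P_x,Q_u,R_v\}$, your observation that $[Q_u,Q_{u'}]$ gives the same scalar condition, and the appeal to Lemma~\ref{lemma:CrossProduct}~(d) for the isometry statement all check out against the paper.

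The genuine gap is in the step you compress into ``reading off the structure constants of $\{P_x,Q_u,R_v\}$ yields Table~1'' and ``rewriting \eqref{BracketsC33} in a convenient basis produces Table~2.'' That identification is not bookkeeping; it is the actual content of the theorem, and your plan contains no mechanism for carrying it out. The paper devotes three substantial lemmas to it. For $a_1^2 + 4a_2 > 0$, Lemma~\ref{C33Lemma1} splits $\mathfrak{g} \cong \mathfrak{b}_k \oplus \mathfrak{b}_{\hat k}$ with $k, \hat k$ the two roots of your quadratic as in \eqref{AtoK}; it is this splitting, not raw structure constants, that lets one recognize $\mathfrak{g}$ and the subalgebra generated by $\mathfrak{p}_c$ as $\so(3)\oplus\so(3,1)$, $\so(3)\oplus\so(4)$, and so on. For $a_1^2 + 4a_2 < 0$ --- which lies inside your case (b), $|a_1| < 2\sqrt{|a_2|}$ --- there is no real eigen-splitting to be found by basis manipulation at all: Lemma~\ref{C33Lemma2} must first exhibit $\mathfrak{g}$ as the complexification $\mathfrak{b}_1^{\mathbb{C}}$ via a map built from powers of a \emph{complex} root $\zeta$ (with $a_2 = -|\zeta|^4$, $a_1 = 2\,\mathrm{Re}(\zeta^2)$), and then invoke $\so(3)^{\mathbb{C}} \cong \so(3,1)$; without the idea of complexifying, the ``convenient basis'' of Table~2 will not be found. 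Likewise the boundary $a_1^2 + 4a_2 = 0$, which occurs on both sides of your dichotomy at $a_1 = \pm 2\sqrt{|a_2|}$, is degenerate: $\mathfrak{g}$ is then not semisimple, and Lemma~\ref{lemma:exceptional} requires a three-stage change of variables to reach the semidirect-product normal forms $\so(3)\ltimes_\theta(\mathbb{R}^3\times\mathbb{R}^3)$ and $\so(3,1)\ltimes_\theta(\mathbb{R}^3\times\mathbb{R}^3)$; a generic ``recognize the structure constants'' recipe does not detect this degeneration. In short: your holonomy/dichotomy argument is complete and coincides with the paper's route, but the identification half --- the splitting, complexification, and degenerate normal-form arguments of Lemmas~\ref{C33Lemma1}, \ref{C33Lemma2} and \ref{lemma:exceptional} --- is missing, and it is precisely there that the statements of Tables~1 and~2 are proved.
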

We will use the rest of this section to prove this result.

\begin{center}
\begin{table}[H]
\centering
\begin{tabular}{|l||l|c|  } \hline
Cases & $\mathfrak{m}$  & Basis $A_x$, $x \in \mathbb{R}^3$  \\ \hline
$a_2 >0$ & $\mathfrak{so}(3) \oplus \mathfrak{so}(3,1)$ & $A_x = \begin{pmatrix} 2\kappa \star^{-1} x \end{pmatrix} \oplus \begin{pmatrix} \kappa \star^{-1} x & \hat \kappa x \\ \hat \kappa x^t & 0 \end{pmatrix}$  \\ \hline
$a_1 < 0$, $a_2 =0$  & $\mathbb{R}^3 \oplus \mathfrak{so}(3,1)$ & $A_x = \begin{pmatrix}  x \end{pmatrix} \oplus \begin{pmatrix} 0 & \hat \kappa x  \\ \hat \kappa x^t  & 0  \end{pmatrix}$  \\ \hline
$a_1 >0$, $a_2 =0$ & $\mathbb{R}^3 \oplus \mathfrak{so}(4)$  & $A_x = \begin{pmatrix} x \end{pmatrix} \oplus \begin{pmatrix} 0 & \kappa x \\ - \kappa x^t & 0 \end{pmatrix}$ \\ \hline
$a_2 <0$, $2 \sqrt{|a_2|} < a_1$ & $\mathfrak{so}(3) \oplus \mathfrak{so}(4)$ & $A_x = \begin{pmatrix} 2\kappa \star^{-1} x \end{pmatrix} \oplus \begin{pmatrix} \kappa \star^{-1} x & \hat \kappa x \\ - \hat \kappa x^t & 0 \end{pmatrix}$  \\ \hline
$a_2 <0$, $2 \sqrt{|a_2|} = a_1$ & $\so(3) \ltimes_\theta (\mathbb{R}^3 \times \mathbb{R}^3)$ & $A_x = (2 \kappa \star^{-1} x) \ltimes \begin{pmatrix} - \frac{\kappa}{2} x \\ 0 \end{pmatrix}$ \\
& & $\theta(\star^{-1} x)\begin{pmatrix} z \\ y \end{pmatrix} = \frac{1}{2} \begin{pmatrix}  x \times (z-y) \\ x \times (y-z)\end{pmatrix} $ \\ \hline
\end{tabular}
\begin{tabular}{ll} \\
$\kappa = \frac{1}{\sqrt{2}}  \left|a_1 + \sqrt{a_1^2 + 4a_2}\right|^{1/2},$ & $\hat \kappa = \frac{1}{\sqrt{2}}  \left|a_1 - \sqrt{a_1^2 + 4a_2}\right|^{1/2}.$
\end{tabular}
\caption{Model spaces with nilpotentization $\mathsf{C}_{3,3}$ that have a sub-Riemannian structure invariant under orientation preserving isometries.
}
\end{table}
\end{center}

\begin{center}
\begin{table}[H]
\centering
\begin{tabular}{|l||l|c|  } \hline
Cases & $\mathfrak{g}$  & Basis $B_x$ , $x \in \mathbb{R}^3$, Basis $C_w$, $w \in \mathbb{R}^3$  \\ \hline
$a_2 <0$ & $\mathfrak{so}(3,1) \oplus \mathfrak{so}(3,1)$ & {\tiny $B_x = \begin{pmatrix} 0 & \kappa x \\ \kappa x^t & 0  \end{pmatrix} \oplus \begin{pmatrix} 0 & \hat \kappa x \\  \hat \kappa x^t & 0  \end{pmatrix}$} \\
$2\sqrt{|a_2|} < -a_1$& & {\tiny $C_w = \begin{pmatrix} \star^{-1} w & 0 \\ 0 & 0  \end{pmatrix} \oplus \begin{pmatrix} \star^{-1} w & 0 \\ 0 & 0  \end{pmatrix}$}  \\ & & \\
& & {\tiny $\kappa = \frac{1}{\sqrt{2}}  \left|a_1 + \sqrt{a_1^2 + 4a_2}\right|^{1/2},$ $\hat \kappa = \frac{1}{\sqrt{2}}  \left|a_1 - \sqrt{a_1^2 + 4a_2}\right|^{1/2}$}\\
\hline
$a_2 <0$ & $\so(3,1) \ltimes_\theta (\mathbb{R}^3 \times \mathbb{R}^3)$ & {\tiny $B_x = \begin{pmatrix} 0 & \sqrt{|a_1|/2} x^t \\ \sqrt{|a_1|/2} x^t & 0 \end{pmatrix} \ltimes \begin{pmatrix} -\frac{\sqrt{|a_1|/2}}{2} x \\ 0 \end{pmatrix}$}  \\ 
$2\sqrt{|a_2|} = -a_1$ & & {\tiny $C_w = \begin{pmatrix} \star^{-1} w & 0 \\ 0 & 0 \end{pmatrix} \ltimes \begin{pmatrix} 0 \\ 0 \end{pmatrix}$} \\ & &{\tiny $\theta\begin{pmatrix} \star^{-1} w & x \\ x^t & 0 \end{pmatrix}\begin{pmatrix} z \\ y \end{pmatrix} = \begin{pmatrix}  w \times z +  x \times y \\ w \times y - x \times z \end{pmatrix} $}  \\ \hline
$a_2 <0$ & $\mathfrak{so}(3,1) \oplus \mathfrak{so}(3,1)$ & {\tiny $B_x = \begin{pmatrix} \mathrm{Re}(\zeta) \star^{-1} x & \mathrm{Im}(\zeta) x \\ \mathrm{Im}(\zeta) x^t  & 0  \end{pmatrix} \oplus \begin{pmatrix} -\mathrm{Re}(\zeta) \star^{-1} x  & - \mathrm{Im}(\zeta) x  \\ -\mathrm{Im}(\zeta) x^t  & 0  \end{pmatrix}$} \\
$|a_1| <2\sqrt{|a_2|}$ & & {\tiny $C_w = \begin{pmatrix} \star^{-1} w & 0 \\ 0 & 0  \end{pmatrix} \oplus \begin{pmatrix} \star^{-1} w & 0 \\ 0 & 0  \end{pmatrix}$} \\ & & \\
& & {\tiny $\zeta =\frac{1}{\sqrt{2}}  \sqrt{a_1 + \sqrt{a_1^2 + 4a_2} }$ (principal root) }  \\ \hline
\end{tabular}
\caption{Other model spaces with nilpotentization $\mathsf{C}_{3,3}$.}
\end{table}
\end{center}
We can conclude the following from the above table.
\begin{corollary}
The model space $\mathsf{C}_{3,3}(a_1,a_2)$ is compact if and only if $a_2 < 0$ and $a_1 < 2 \sqrt{-a_2}$.
\end{corollary}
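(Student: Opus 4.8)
The plan is to read compactness of $M=\mathsf{C}_{3,3}(a_1,a_2)$ off the Lie-theoretic data already recorded in Tables~1 and~2, by means of a single topological reduction. Recall from \cite[Corollary~5.6]{Polish_Metric_Spaces} that the isotropy group $K$ is compact; hence in the principal bundle $K\to\Isom_0(M)\to M$ the fiber is compact, so $M$ is compact if and only if $\Isom_0(M)$ is compact. A connected Lie group is compact exactly when its Lie algebra is of compact type, i.e.\ has negative-semidefinite Killing form, and a connected and simply connected group is compact exactly when its Lie algebra is compact semisimple (equivalently, has no nonzero abelian ideal, since a simply connected abelian factor contributes a noncompact $\mathbb{R}^k$). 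Thus I reduce the corollary to a finite inspection: in the regimes of Table~1 the space $M$ is itself the simply connected Lie group with algebra $\mathfrak{m}$, so there $M$ is compact iff $\mathfrak{m}$ is compact semisimple; in the regimes of Table~2 the algebra $\mathfrak{g}$ of $\Isom_0(M)$ is listed directly, and $M=\Isom_0(M)/K$ is compact iff $\mathfrak{g}$ is of compact type.

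First I would run through the five rows of Table~1. The rows with $\mathfrak{m}\cong\so(3)\oplus\so(3,1)$ and $\mathfrak{m}\cong\mathbb{R}^3\oplus\so(3,1)$ carry the noncompact Lorentz factor $\so(3,1)$ and so are not of compact type; the rows $\mathfrak{m}\cong\mathbb{R}^3\oplus\so(4)$ and $\mathfrak{m}\cong\so(3)\ltimes_\theta(\mathbb{R}^3\times\mathbb{R}^3)$ contain a nonzero abelian ideal (respectively $\mathbb{R}^3$ and $\mathbb{R}^3\times\mathbb{R}^3$), whose exponential is a closed noncompact $\mathbb{R}^k$ inside the simply connected group, again precluding compactness. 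The one remaining row, $\mathfrak{m}\cong\so(3)\oplus\so(4)$, is compact semisimple: since $\so(3)\cong\su(2)$ and $\so(4)\cong\su(2)\oplus\su(2)$, the simply connected group is $\SU(2)^3$, which is compact. Hence among the Table~1 regimes, $M$ is compact precisely in the row with $\mathfrak{m}\cong\so(3)\oplus\so(4)$, that is, for $a_2<0$ and $2\sqrt{|a_2|}<a_1$.

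It then remains to treat Table~2, corresponding to $a_2<0$ and $a_1<2\sqrt{|a_2|}$. In each of its three rows the isometry algebra $\mathfrak{g}$ either splits off two copies of the noncompact $\so(3,1)$ or contains the abelian ideal $\mathbb{R}^3\times\mathbb{R}^3$; in every case $\mathfrak{g}$ fails to be of compact type, so $\Isom_0(M)$ is noncompact and therefore $M$ is noncompact. Combining the two analyses shows that $\mathsf{C}_{3,3}(a_1,a_2)$ is compact exactly in the single regime isolated above, completing the proof. The only real obstacle is bookkeeping: one must correctly identify the signature of each coupling block in the tables — distinguishing the compact $\so(4)$-coupling, with antisymmetric off-diagonal term $-\hat\kappa\,x^t$, from the noncompact $\so(3,1)$-coupling, with symmetric off-diagonal term $\hat\kappa\,x^t$ — and confirm that the degenerate semidirect-product rows genuinely contribute a nontrivial abelian ideal. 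Once the type of each algebra is pinned down, the split into compact and noncompact cases is immediate.
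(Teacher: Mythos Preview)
Your approach is the same as the paper's: the corollary is stated immediately after the tables with only the remark ``We can conclude the following from the above table,'' and you carry out exactly that table-reading with the natural Lie-theoretic justification. The case analysis itself is sound (with the minor caveats that the biconditional ``connected Lie group compact $\Leftrightarrow$ Lie algebra of compact type'' is false as stated, though you only use the correct direction, and that the degenerate point $(a_1,a_2)=(0,0)$ is not listed in either table but is the nilpotent Carnot group and hence noncompact).

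However, there is a genuine discrepancy you do not address. Your own analysis isolates the single compact regime as the row $\mathfrak m\cong\so(3)\oplus\so(4)$, which in Table~1 is the range $a_2<0$ and $2\sqrt{|a_2|}<a_1$, i.e.\ $a_1>2\sqrt{-a_2}$. The corollary as stated asserts compactness for $a_1<2\sqrt{-a_2}$. These are opposite inequalities, and your final sentence ``completing the proof'' papers over the mismatch. In fact the inequality you derived agrees with Theorem~\ref{introductory_theorem}(a), which says the spaces are ``compact precisely when $a_2<0$ and $2\sqrt{-a_2}<a_1$''; the corollary in Section~\ref{Sec: C33} carries a sign typo. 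Your argument is correct, but it proves $a_1>2\sqrt{-a_2}$, not the statement as written; you should flag this rather than claim to have established the printed inequality.
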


\subsection{Isometry-Invariant Lie group Structure}
As a first step towards proving Theorem \ref{th:AllC33}, we want to understand which of the sub-Riemannian model spaces $\mathsf{C}_{3,3}(a_1,a_2)$ can be made into Lie groups with left invariant structures. To do this, we will use the theory developed in Section~\ref{sec:CrossProduct}.
\begin{lemma} \label{lemma:HolCZero}
Consider the sub-Riemannian model space $\mathsf{C}_{3,3}(a_1, a_2)$ and define $\nabla^c$ as in \eqref{nablac}.
\begin{enumerate}[\rm (a)]
\item If $a_2 >0$, then $\Hol^{\nabla^c} = 1$ if and only if
$$c = \pm \frac{1}{\sqrt{2}} \sqrt{\sqrt{a_1^2 + 4a_2} + a_1}.$$
\item If $a_2 \leq 0$ and $a_1 \geq \sqrt{-4a_2}$, then $\Hol^{\nabla^c} = 1$ if and only if
$$c = \pm \frac{1}{\sqrt{2}} \sqrt{a_1 - \sqrt{a_1^2 + 4a_2}} , \qquad c = \pm \frac{1}{\sqrt{2}} \sqrt{a_1 + \sqrt{a_1^2 + 4a_2}} .$$
\item If $a_2 \leq 0$ and $a_1 < \sqrt{-4a_2}$, then $\Hol^{\nabla^c}$ is non-trivial for all values of $c$.
\end{enumerate}
\end{lemma}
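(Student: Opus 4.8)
The plan is to turn the holonomy computation into a purely algebraic question about the Lie algebra $\mathfrak{g}$ of $G=\Isom_0(\mathsf{C}_{3,3}(a_1,a_2))$ described in Theorem~\ref{C_33_Classification}. By Lemma~\ref{lemma:CrossProduct}~(c) the $\nabla^c$-parallel transport along a horizontal loop is realized by lifting the loop to a curve $\varphi$ in $G$ with $d\ell_{\varphi^{-1}}\dot\varphi\in\mathfrak{p}_c$; for a loop the endpoint lies in $K_0$, and the corresponding holonomy element is the isotropy action of that endpoint on $H_x$. As the reachable endpoints fill out $K_0\cap P_c$, where $P_c$ is the connected subgroup with Lie algebra the subalgebra $\mathfrak{m}_c:=\langle\mathfrak{p}_c\rangle$ generated by $\mathfrak{p}_c$, and since by Lemma~\ref{lemma:CrossProduct}~(d) the holonomy is either trivial or all of $\SO(3)$ while the isotropy representation of $K_0=\SO(3)$ on $H_x$ is faithful, I would establish the reduction
\[ \Hol^{\nabla^c}=1 \iff \mathfrak{m}_c\cap\mathfrak{k}=0. \]
Equivalently, $\mathfrak{m}_c$ is then the $\mathrm{Ad}(K_0)$-invariant subalgebra complementary to $\mathfrak{k}$ giving the compatible left-invariant Lie group structure of Lemma~\ref{lemma:CrossProduct}~(d). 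To make $\mathfrak{p}_c$ explicit I identify $\mathfrak{p}$ with the first $\mathbb{R}^3$-summand, and using the defining property $\pi_*[L(A),B]=\pi_*A\times\pi_*B$ together with the bracket \eqref{BracketsC33} I find $L(x,0,0,0)=(0,0,0,x)$, so that $\mathfrak{p}_c=\{(x,0,0,cx):x\in\mathbb{R}^3\}$.

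The structural observation that makes this tractable is that \emph{every} bracket in \eqref{BracketsC33} is a sum of cross products of the $\mathbb{R}^3$-components. Writing $\mathfrak{g}\cong\mathbb{R}^3\otimes\mathbb{R}^4$, with $\SO(3)$ acting on the first factor and $\mathbb{R}^4=\spn\{e_1,e_2,e_3,e_4\}$ recording the four summands (so that $\mathfrak{k}=\mathbb{R}^3\otimes\spn\{e_4\}$), the bracket reads
\[ [u\otimes a,\,v\otimes b]=(u\times v)\otimes\beta(a,b),\qquad u,v\in\mathbb{R}^3,\ a,b\in\mathbb{R}^4, \]
for a symmetric bilinear product $\beta:\mathbb{R}^4\times\mathbb{R}^4\to\mathbb{R}^4$ whose coefficients are read off from \eqref{BracketsC33}. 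Since $u\times v$ ranges over all of $\mathbb{R}^3$, the subalgebra generated by $\mathbb{R}^3\otimes V$ is $\mathbb{R}^3\otimes W$, where $W$ is the $\beta$-subalgebra of $\mathbb{R}^4$ generated by $V$. With $\mathfrak{p}_c=\mathbb{R}^3\otimes\langle p_0\rangle$ and $p_0=e_1+ce_4$, the reduction above becomes $\Hol^{\nabla^c}=1\iff e_4\notin W$.

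Next I would identify $(\mathbb{R}^4,\beta)$ outright. A direct check of the coefficients shows it is a commutative, associative, unital algebra with unit $e_4$, satisfying $e_1^2=e_2$, $e_1^3=e_3$ and $e_1^4=a_1e_2+a_2e_4$; hence $(\mathbb{R}^4,\beta)\cong\mathbb{R}[t]/(t^4-a_1t^2-a_2)$ via $e_1\mapsto t$, $e_4\mapsto1$. Under this identification $p_0\mapsto s:=t+c$, and $s$ satisfies $m(s)=0$ for $m(T)=(T-c)^4-a_1(T-c)^2-a_2$, whose constant term is $m(0)=c^4-a_1c^2-a_2$. The non-unital subalgebra generated by $s$ is $\spn\{s,s^2,s^3,\dots\}$; solving $m(s)=0$ for the lowest power expresses $1$ through $s,s^2,s^3,s^4$ exactly when $m(0)\neq0$, whereas if $m(0)=0$ then $m(T)=T\,\tilde m(T)$, the relation $s\,\tilde m(s)=0$ gives $s^4\in\spn\{s,s^2,s^3\}$, and $W=\spn\{s,s^2,s^3\}$ does not contain $1$. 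Therefore $e_4\in W\iff c^4-a_1c^2-a_2\neq0$, so
\[ \Hol^{\nabla^c}=1\iff c^4-a_1c^2-a_2=0. \]

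It then remains to solve this quartic. Putting $u=c^2\ge0$ gives $u^2-a_1u-a_2=0$ with roots $u_\pm=\tfrac12\bigl(a_1\pm\sqrt{a_1^2+4a_2}\bigr)$, and a real $c$ exists exactly for each non-negative real root. For $a_2>0$ only $u_+>0$ is admissible, giving part (a); for $a_2\le0$ with $a_1\ge\sqrt{-4a_2}$ both $u_\pm\ge0$ are real and admissible, giving part (b); and for $a_2\le0$ with $a_1<\sqrt{-4a_2}$ there is no non-negative real root (either $a_1^2+4a_2<0$, or $a_1<0$ forces $u_\pm\le0$), so the holonomy is non-trivial for all $c$, which is part (c). The routine portions are reading off the coefficients of $\beta$ and the final case analysis; the main obstacle is the first-paragraph reduction, namely matching triviality of $\Hol^{\nabla^c}$ precisely with $\mathfrak{m}_c\cap\mathfrak{k}=0$, together with the delicate point in the third paragraph that the \emph{non-unital} subalgebra generated by $p_0$ contains the algebra unit $e_4$ if and only if the constant term $c^4-a_1c^2-a_2$ is non-zero — this dichotomy is exactly what distinguishes trivial from $\SO(3)$ holonomy.
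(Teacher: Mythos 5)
Your proposal is correct in substance and arrives at exactly the paper's criterion, but by a genuinely different computational route. The paper's proof uses the same reduction you sketch in your first paragraph --- by Lemma~\ref{lemma:CrossProduct}, triviality of $\Hol^{\nabla^c}$ is matched with $\mathfrak{p}_c=\{(x,0,0,cx)\}$ generating a subalgebra transverse to $\mathfrak{k}$ --- and then simply computes three iterated brackets of \eqref{BracketsC33} by hand in ad hoc primed coordinates, finding that the $\mathfrak{k}$-component of the triple bracket obstructs closure unless $-c^4+a_1c^2+a_2=0$, the same quartic you derive. Your identification $\mathfrak{g}\cong\mathbb{R}^3\otimes\mathbb{R}[t]/(t^4-a_1t^2-a_2)$ with $[u\otimes a,v\otimes b]=(u\times v)\otimes ab$ (which I checked against all ten products read off from \eqref{BracketsC33}; it is correct) is a genuine improvement in organization: it explains the paper's primed coordinates, reduces the holonomy question to whether the non-unital subalgebra generated by $s=t+c$ contains $1$, and --- unlike the paper's computation, which only exhibits the obstruction in a single bracket and never verifies that the candidate complement is actually closed --- it settles necessity and sufficiency in one stroke, since $m(0)=0$ forces $W=\spn\{s,s^2,s^3\}$ to be multiplicatively closed and to miss $1$ (as $1,s,s^2,s^3$ is a basis). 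It also dovetails nicely with Lemma~\ref{C33Lemma1}: when the quartic factors as $(t^2-k)(t^2-\hat k)$ with $k\neq\hat k$, the Chinese remainder splitting of your commutative algebra is precisely the isomorphism $\mathfrak{g}\cong\mathfrak{b}_k\oplus\mathfrak{b}_{\hat k}$.

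One boundary point deserves flagging. In case (c) you conclude ``there is no non-negative real root'' from ``$a_1<0$ forces $u_\pm\le0$'', but $u_+=0$ \emph{is} a non-negative root; it occurs exactly when $a_2=0$ and $a_1<0$, where $c=0$ solves $c^4-a_1c^2-a_2=0$ and, by your own criterion --- and equally by the paper's displayed computation, whose obstruction term $(-c^4+a_2+c^2a_1)\,x\times z$ vanishes there, with the resulting span $\{(x,y,z,0)\}$ closed under \eqref{BracketsC33} when $a_2=0$ --- the holonomy $\Hol^{\nabla^0}$ is trivial. This is less a defect of your method than an inconsistency in the lemma's statement itself: part (c) as written includes $(a_1<0,\ a_2=0)$, yet Table~1 and Theorem~\ref{th:AllC33}~(a) assign precisely these spaces the invariant Lie group structure with Lie algebra $\mathbb{R}^3\oplus\so(3,1)$, which presupposes trivial holonomy at $c=0$. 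So either restrict (c) to $a_2<0$ (where your argument is airtight, since $u_+=0$ forces $a_2=0$) or record the exceptional solution $c=0$ when $a_2=0$ and $a_1<0$; with that emendation your proof is complete.
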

\begin{proof} Recall the brackets in \eqref{BracketsC33} for the Lie algebra $\mathfrak{g}$ of the isometry group. Define $\mathfrak{p}_c$ as the span of elements on the form
$$\begin{pmatrix} x \\ 0 \\ 0  \end{pmatrix}' : = \begin{pmatrix} x \\ 0 \\ 0 \\ c x \end{pmatrix}.$$
This is the subspace of $\mathfrak{g}$ corresponding to $\nabla^c$. We want to try to find a value of $c$ such that $\mathfrak{p}_c$ generate a subalgebra transverse to $\mathfrak{k}$, meaning that $\nabla^c$ has trivial holonomy. We observe that
\begin{align*} &  \left[ \begin{pmatrix} x_1 \\ 0 \\ 0 \end{pmatrix} ',\begin{pmatrix} x_2 \\ 0 \\ 0 \end{pmatrix}' \right]
=  \begin{pmatrix} 2c x_1 \times x_2 \\ x_1 \times x_2 \\ 0 \\ c^2 x_1 \times x_2 \end{pmatrix} = \begin{pmatrix} 2c x_1 \times x_2 \\ x_1 \times x_2 \\ 0 \end{pmatrix}',  \text{ with } \begin{pmatrix} x \\ y \\ 0 \end{pmatrix}' := \begin{pmatrix} x \\ y \\ 0 \\ cx - c^2 y \end{pmatrix}.
\end{align*}
We then have
\begin{align*} &  \left[ \begin{pmatrix} x \\ 0 \\ 0 \end{pmatrix} ',\begin{pmatrix} 0 \\ y \\ 0 \end{pmatrix}' \right]
=  \begin{pmatrix} - c^2 x\times y \\ c x \times y \\ x \times y \\ - c^3 x \times y \end{pmatrix} = \begin{pmatrix} -c^2  x \times y \\ c x \times y \\ x \times y \end{pmatrix}',  \text{ with } \begin{pmatrix} x \\ y \\ z \end{pmatrix}' := \begin{pmatrix} x \\ y \\ z \\ cx - c^2 y + c^3 z \end{pmatrix}.
\end{align*}
Finally we obtain
\begin{align*} &  \left[ \begin{pmatrix} x \\ 0 \\ 0 \end{pmatrix} ',\begin{pmatrix} 0 \\ 0 \\ z \end{pmatrix}' \right]
=  \begin{pmatrix} c^3 x\times z \\ a_1 x \times z \\ c x \times z \\ (c^4+ a_2) x \times z \end{pmatrix} = \begin{pmatrix} c^3  x \times y \\ a_1 x \times y \\ cx \times y \end{pmatrix}'  + \begin{pmatrix} 0\\ 0 \\ 0 \\ (- c^4+ a_2+ c^2 a_1 ) x \times z \end{pmatrix}.\end{align*}
Hence, in order for $\mathfrak{p}_c$ to generate a proper subalgebra, we must have that 
\[a_1^2 + 4a_2 \geq 0 \quad \textrm{and} \quad c^2 = \frac{a_1}{2} \pm \frac{\sqrt{a_1^2 + 4 a_2}}{2} \geq 0.\]
\end{proof}

Combining Lemma \ref{lemma:HolCZero} with Lemma \ref{lemma:CrossProduct} (d) gives the following result about compatible Lie group structures. 

\begin{corollary}
The model space $\mathsf{C}_{3,3}(a_1,a_2)$ has a Lie group structure making the horizontal distribution and the metric left-invariant such that every orientation-preserving isometry is a composition of a left translation and a Lie group automorphism if and only if either $a_2 > 0$ or both $a_2 \leq 0$ and $a_1 \geq \sqrt{-4a_2}$.
\end{corollary}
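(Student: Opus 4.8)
The plan is to recognize this corollary as the direct combination of the two preceding lemmas. Lemma~\ref{lemma:CrossProduct}~(d) converts the existence of a compatible left-invariant Lie group structure into a statement about the triviality of the holonomy of some cross-product connection $\nabla^c$, while Lemma~\ref{lemma:HolCZero} pins down exactly for which parameters $(a_1,a_2)$ such a $c$ exists. Concretely, I would first establish the equivalence that $M=\mathsf{C}_{3,3}(a_1,a_2)$ carries a left-invariant Lie group structure of the stated type if and only if $\Hol^{\nabla^c}=1$ for some $c\in\mathbb{R}$, and then read off the parameter condition. Since cases (a) and (b) of Lemma~\ref{lemma:HolCZero} together cover precisely the region where $a_2>0$ or both $a_2\le 0$ and $a_1\ge\sqrt{-4a_2}$, whereas case (c) is the complementary region in which no $c$ works, the corollary will follow at once from this equivalence.

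The backward direction of the equivalence requires nothing beyond invoking the machinery directly. Assuming we are in case (a) or (b) of Lemma~\ref{lemma:HolCZero}, that lemma exhibits an explicit value of $c$ for which $\Hol^{\nabla^c}$ is trivial. Feeding this $c$ into Lemma~\ref{lemma:CrossProduct}~(d) produces a Lie group structure on $M$ with $(H,g)$ left-invariant and with every orientation-preserving isometry a composition of a left translation and a Lie group automorphism, which is exactly the asserted structure.

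The forward direction is the step requiring more care, and I expect it to be the main obstacle. Suppose $M$ already carries such a Lie group structure, and consider the flat left-invariant connection $\nabla^{\mathrm{L}}$ determined by declaring every left-invariant section of $H$ to be parallel. Because $(H,g)$ is left-invariant, $\nabla^{\mathrm{L}}$ is compatible with $g$; and because by hypothesis every element of $\Isom_0(M)$ is a left translation followed by a Lie group automorphism, $\nabla^{\mathrm{L}}$ satisfies the invariance condition \eqref{invariant} for all $\varphi\in\Isom_0(M)$. Lemma~\ref{lemma:CrossProduct}~(a) then forces $\nabla^{\mathrm{L}}=\nabla^c$ for a single $c\in\mathbb{R}$. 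Since $M$ is simply connected and $\nabla^{\mathrm{L}}$ admits a globally parallel left-invariant horizontal frame, its holonomy, hence that of $\nabla^c$, is trivial. By Lemma~\ref{lemma:HolCZero} this excludes case (c), leaving precisely the parameter range in the statement. The only delicate point is verifying the orientation-preserving invariance of $\nabla^{\mathrm{L}}$; here one uses that a Lie group automorphism of $M$ sends left-invariant parallel frames to left-invariant parallel frames, so that parallel transport for $\nabla^{\mathrm{L}}$ intertwines with each such isometry, which is exactly the meaning of \eqref{invariant}.
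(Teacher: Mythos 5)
Your proposal is correct and follows essentially the same route as the paper, whose proof is exactly the combination of Lemma~\ref{lemma:HolCZero} with Lemma~\ref{lemma:CrossProduct}~(d). In fact you go further than the paper: your forward-direction argument (that a compatible Lie group structure yields the flat left-invariant partial connection, which by Lemma~\ref{lemma:CrossProduct}~(a) must equal some $\nabla^c$ with trivial holonomy, ruling out case (c) of Lemma~\ref{lemma:HolCZero}) correctly fills in the ``only if'' step that the paper's one-line proof leaves implicit.
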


\subsection{Proof of Theorem~\ref{th:AllC33}} 
We will prove several lemmas before turning to the proof of Theorem \ref{th:AllC33}.
Define $\mathfrak{b}_k$ for $k \in \mathbb{R}$ as the Lie algebra given by the vector space $\mathbb{R}^3 \oplus \mathbb{R}^3$ with brackets
$$\left[\begin{pmatrix} x_1 \\ y_1 \end{pmatrix} , \begin{pmatrix} x_2 \\ y_2 \end{pmatrix}\right] = \begin{pmatrix} x_1 \times y_2 + y_1 \times x_2 \\ k x_1 \times x_2 + y_1 \times y_2  \end{pmatrix}.$$
We note that $\mathfrak{b}_{1}$ and $\mathfrak{b}_{-1}$ are isomorphic to respectively $\so(4)$ and $\so(3,1)$.
\begin{lemma} \label{C33Lemma1}
Assume that $a_1^2 + 4 a_2 > 0$ and let $k$ and $\hat k$ be defined by
\begin{equation} \label{AtoK} k = \left(a_1 + \sqrt{a_1^2 + 4a_2}\right)/2, \qquad \hat k = \left(a_1 - \sqrt{a_1^2 + 4a_2}\right)/2.\end{equation}
Let $\mathfrak{g}$ denote the Lie algebra of the isometry group of $\mathsf{C}_{3,3}(a_1,a_2)$. Then there is a Lie algebra isomorphism $\psi: \mathfrak{g} \to \mathfrak{b}_k \oplus \mathfrak{b}_{\hat k}$ given by
$$\psi\begin{pmatrix} x \\ y \\ z \\ w \end{pmatrix} = \begin{pmatrix} x + kz \\ w + ky \end{pmatrix} \oplus \begin{pmatrix} x + \hat k z \\ w + \hat k y \end{pmatrix}.$$
\end{lemma}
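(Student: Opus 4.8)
The plan is to factor $\psi$ through a single one-parameter family of maps and thereby reduce the whole verification to one scalar identity. For $t \in \mathbb{R}$, define the linear map $\pi_t : \mathfrak{g} \to \mathfrak{b}_t$ by $\pi_t(x,y,z,w) = (x + tz, \, w + ty)$. Then $\psi = \pi_k \oplus \pi_{\hat k}$, so it suffices to show that $\pi_t$ is a Lie algebra homomorphism whenever $t$ is a root of the quadratic $t^2 - a_1 t - a_2 = 0$. By \eqref{AtoK} the numbers $k$ and $\hat k$ are precisely the two roots of this polynomial, so this will settle the homomorphism property of $\psi$.

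First I would record that $\psi$ is a linear bijection. Since $a_1^2 + 4a_2 > 0$ we have $k - \hat k = \sqrt{a_1^2 + 4a_2} \neq 0$, so the equations $x + kz = x + \hat k z = 0$ appearing in $\psi(x,y,z,w) = 0$ force $z = x = 0$, and likewise $w + ky = w + \hat k y = 0$ force $y = w = 0$. Hence $\psi$ is injective, and because $\dim \mathfrak{g} = 12 = \dim(\mathfrak{b}_k \oplus \mathfrak{b}_{\hat k})$ it is bijective.

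The core step is to check that $\pi_t$ respects brackets. Writing $[\xi_1,\xi_2] = (X,Y,Z,W)$ with the four components read off from \eqref{BracketsC33}, the identity $\pi_t[\xi_1,\xi_2] = [\pi_t \xi_1, \pi_t \xi_2]_{\mathfrak{b}_t}$ amounts to the two vector equations
\[X + tZ = (x_1 + tz_1)\times(w_2 + ty_2) + (w_1 + ty_1)\times(x_2 + tz_2),\]
\[W + tY = t\,(x_1 + tz_1)\times(x_2 + tz_2) + (w_1 + ty_1)\times(w_2 + ty_2).\]
Expanding both sides by bilinearity of the cross product, every term matches verbatim except for a small number whose coefficients pit a power of $t$ against a combination of $a_1, a_2$. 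In the first equation the sole discrepancy is the coefficient of $y_1\times z_2 + z_1\times y_2$, equal to $a_2 + t a_1$ on the left and $t^2$ on the right; in the second equation, matching the coefficient of $x_1\times z_2 + z_1\times x_2 + y_1\times y_2$ again demands $t^2 = a_1 t + a_2$, while the coefficient of $z_1\times z_2$ demands $t^3 = a_1^2 t + a_1 a_2 + a_2 t$. The latter is automatic from the former, since $t^3 = t\cdot t^2 = t(a_1 t + a_2) = a_1 t^2 + a_2 t = a_1(a_1 t + a_2) + a_2 t$.

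Thus every required identity collapses to the single relation $t^2 = a_1 t + a_2$, which holds for $t = k$ and $t = \hat k$ by the definition \eqref{AtoK}. The main obstacle is purely one of bookkeeping: organizing the expansion of the two cross-product identities so that the cancellations are transparent and the residual coefficient comparisons are cleanly isolated. There is no conceptual difficulty once $\psi$ is recognized as $\pi_k \oplus \pi_{\hat k}$ and the verification is carried out uniformly in the root $t$.
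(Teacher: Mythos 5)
Your proposal is correct and takes essentially the same approach as the paper: a direct verification that $\psi$ intertwines the bracket \eqref{BracketsC33} with that of $\mathfrak{b}_k \oplus \mathfrak{b}_{\hat k}$, where the paper organizes the computation via the symmetric-function identities $k + \hat k = a_1$, $-k\hat k = a_2$ while you equivalently check each projection $\pi_t$ uniformly through the root equation $t^2 = a_1 t + a_2$ (your observation that the $z_1 \times z_2$ coefficient identity $t^3 = a_1^2 t + a_1 a_2 + a_2 t$ follows automatically from the quadratic is accurate). Your explicit bijectivity argument, injectivity from $k \neq \hat k$ plus the dimension count $12 = 12$, is a small addition the paper leaves implicit.
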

\begin{proof}
We note that by definition, $k$ and $\hat k$ can be considered as the unique solution of
$$k + \hat k =a_1, \qquad - k \hat k = a_2, \qquad \hat k < k.$$
A straightforward computation shows that

\begin{align*}
    \left[\psi\begin{pmatrix}
    x_1 \\ y_1 \\ z_1 \\ w_1
    \end{pmatrix}, \psi\begin{pmatrix}
    x_2 \\ y_2 \\ z_2 \\ w_2
    \end{pmatrix}\right] & = \psi \begin{pmatrix} x_1 \times w_2 + w_1 \times x_2 \\ w_1 \times y_2 + y_1 \times w_2 + x_1 \times x_2  \\ x_1 \times y_2 + y_1 \times x_2 +z_1 \times w_2 + w_1  \\ w_1 \times w_2 \end{pmatrix} \\ & \quad + (k + \hat k) \psi \begin{pmatrix} 0 \\ x_1 \times z_2 + z_1 \times x_2 + y_1 \times y_2 - k \hat k z_1 \times z_2 \\ y_1 \times z_2 + z_1 \times y_2 \\ 0 \end{pmatrix} \\ & \quad - k \hat k \psi \begin{pmatrix} y_1 \times z_2 + z_1 \times y_2 \\ z_1 \times z_2 \\ 0 \\ x_1 \times z_2 + z_1 \times x_2 + y_1 \times y_2 + (k + \hat k) z_1 \times z_2 \end{pmatrix}.
\end{align*}
This implies that $\psi$ is a Lie algebra isomorphism by \eqref{AtoK}.
\end{proof}

\begin{lemma} \label{C33Lemma2}
Assume that $a^2_1 + 4 a_2 < 0$ and let $\mathfrak{g}$ be the Lie algebra of the isometry group of $\mathsf{C}_{3,3}(a_1, a_2)$. Let $\zeta \in \mathbb{C}$ be a complex number satisfying
$$a_2 = - |\zeta|^4, \qquad a_1 = 2 \mathrm{Re}(\zeta^2).$$
Then there is an isomorphism of Lie algebras $\psi: \mathfrak{g} \to \mathfrak{so}(3,1) \oplus \mathfrak{so}(3,1)$ given by
\begin{align*} \psi\begin{pmatrix} x\\ y \\ z \\ w \end{pmatrix} &= \begin{pmatrix} \star^{-1}( w + \mathrm{Re}(\zeta)  x + \mathrm{Re}(\zeta^2) y + \mathrm{Re}(\zeta^3) z) &  w + \mathrm{Im}(\zeta)  x + \mathrm{Im}(\zeta^2) y + \mathrm{Im}(\zeta^3) z \\ ( w + \mathrm{Re}(\zeta)  x + \mathrm{Re}(\zeta^2) y + \mathrm{Re}(\zeta^3) z)^t & 0 \end{pmatrix} \\
& \quad \oplus  \begin{pmatrix} \star^{-1}( w - \mathrm{Re}(\zeta)  x + \mathrm{Re}(\zeta^2) y - \mathrm{Re}(\zeta^3) z) &  w - \mathrm{Im}(\zeta)  x + \mathrm{Im}(\zeta^2) y - \mathrm{Im}(\zeta^3) z \\ ( w - \mathrm{Re}(\zeta)  x + \mathrm{Re}(\zeta^2) y - \mathrm{Re}(\zeta^3) z)^t & 0 \end{pmatrix}. \end{align*}
\end{lemma}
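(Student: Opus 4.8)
The plan is to mirror the proof of Lemma~\ref{C33Lemma1}, the only change being that now the discriminant $a_1^2 + 4a_2$ is negative, so the two ``roots'' that diagonalize the bracket form a complex conjugate pair $k = \zeta^2$ and $\hat k = \overline{\zeta^2}$. Indeed the defining relations $a_1 = 2\mathrm{Re}(\zeta^2)$ and $a_2 = -|\zeta|^4$ say exactly that $k + \hat k = a_1$ and $-k\hat k = a_2$, so $\zeta^2$ is a non-real root of $t^2 - a_1 t - a_2$ and hence
\[ \zeta^4 = a_1 \zeta^2 + a_2. \]
This single relation is what makes all the bracket computations close. Conceptually, complexifying and running the computation of Lemma~\ref{C33Lemma1} over $\mathbb{C}$ with $k = \zeta^2$ gives $\mathfrak{g} \otimes_\mathbb{R} \mathbb{C} \cong \mathfrak{b}_{\zeta^2}^\mathbb{C} \oplus \mathfrak{b}_{\overline{\zeta^2}}^\mathbb{C}$, and each summand is isomorphic to $\mathfrak{so}(4,\mathbb{C})$ (rescale $x \mapsto \zeta x$ to reduce $\mathfrak{b}_{\zeta^2}^\mathbb{C}$ to $\mathfrak{b}_1^\mathbb{C} \cong \mathfrak{so}(4,\mathbb{C})$, which is permitted over $\mathbb{C}$ since $a_2 \neq 0$ forces $\zeta^2 \neq 0$). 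The real structure on $\mathfrak{g}\otimes\mathbb{C}$ interchanges the two summands because it sends $\zeta^2$ to $\overline{\zeta^2}$, and the fixed real form of a conjugation swapping two isomorphic complex factors is a single copy of the complex algebra regarded as a real one. Since $(\mathfrak{so}(4,\mathbb{C}))_\mathbb{R} \cong (\mathfrak{sl}(2,\mathbb{C}) \oplus \mathfrak{sl}(2,\mathbb{C}))_\mathbb{R} \cong \mathfrak{so}(3,1) \oplus \mathfrak{so}(3,1)$, this predicts the target algebra and explains the explicit formula for $\psi$, whose two summands correspond to the choices $\pm \zeta$.

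For the concrete verification I would first record the block description $\begin{pmatrix} \star^{-1} a & b \\ b^t & 0\end{pmatrix}$, $a,b \in \mathbb{R}^3$, of $\mathfrak{so}(3,1)$, together with its bracket expressed in the pairs $(a,b)$ via $[\star^{-1}a_1, \star^{-1}a_2] = \star^{-1}(a_1 \times a_2)$ and the analogous rules for the boost blocks; this realizes the identification $\mathfrak{so}(3,1) \cong \mathfrak{b}_{-1}$ already noted in the excerpt. One then reads the rotation block of the first summand of $\psi$ as $\mathrm{Re}(w + \zeta x + \zeta^2 y + \zeta^3 z)$ (using that $w$ is real) and the boost block as $w$ together with $\mathrm{Im}(\zeta)x + \mathrm{Im}(\zeta^2)y + \mathrm{Im}(\zeta^3)z$, the second summand being obtained by replacing $\zeta$ with $-\zeta$. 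With this in hand, checking that $\psi$ is a homomorphism is a direct computation: expand $[\psi(\xi_1), \psi(\xi_2)]$ using the $\mathfrak{so}(3,1)$ bracket, collect the resulting powers $\zeta^j$ for $0 \le j \le 6$, reduce every $\zeta^4, \zeta^5, \zeta^6$ by $\zeta^4 = a_1\zeta^2 + a_2$, and match the outcome term by term against $\psi$ applied to the bracket \eqref{BracketsC33}. The bookkeeping is exactly parallel to Lemma~\ref{C33Lemma1}, with powers of the real root there replaced by powers of $\zeta$ split into real and imaginary parts.

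It remains to check that $\psi$ is bijective, and since both source and target are $12$-dimensional real vector spaces it suffices to verify injectivity. The cleanest route is to observe that, after the block identification above, the complexification of $\psi$ coincides with the complex-linear map of Lemma~\ref{C33Lemma1} for the value $k = \zeta^2$, which is invertible exactly when $k \neq \hat k$; this holds here because $a_1^2 + 4a_2 < 0$ forces $\zeta^2$ to be non-real, so $\zeta^2 \neq \overline{\zeta^2}$. Equivalently, one solves the defining linear system directly, using that $1, \zeta, \zeta^2, \zeta^3$ together with their conjugates are independent enough since $\zeta \notin \mathbb{R}$ and $\zeta \neq 0$. I expect the main obstacle to be purely computational, namely the careful reduction of the high powers of $\zeta$ in the homomorphism check and the tracking of which combinations land in the rotation versus the boost block; everything else is dictated by the conceptual picture above.
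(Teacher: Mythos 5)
Your proposal is correct and is in substance the same argument as the paper's. The paper also works by complexification: it defines the real Lie algebra homomorphism $\psi_2 \colon \mathfrak{g}\to\mathfrak{b}_1^{\mathbb{C}}$, $\psi_2(x,y,z,w) = \left(\zeta(x+\zeta^2 z),\, w+\zeta^2 y\right)$, checks in a single bracket computation --- using only $a_1 = 2\mathrm{Re}(\zeta^2)$ and $a_2 = -|\zeta|^4$, i.e.\ your relation $\zeta^4 = a_1\zeta^2 + a_2$ --- that it is an isomorphism onto $\mathfrak{b}_1^{\mathbb{C}}$ viewed as a real Lie algebra, and then composes with the explicit identifications $\mathfrak{b}_1\simeq\so(3)\oplus\so(3)$, $(x,y)\mapsto(\star^{-1}(y+x),\star^{-1}(y-x))$, and $\so(3)^{\mathbb{C}}\simeq\so(3,1)$. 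The map $\psi_2$ is precisely what your conceptual picture predicts: it is the projection of the complexified Lemma~\ref{C33Lemma1} isomorphism $\mathfrak{g}\otimes\mathbb{C}\simeq\mathfrak{b}_{\zeta^2}^{\mathbb{C}}\oplus\mathfrak{b}_{\overline{\zeta^2}}^{\mathbb{C}}$ onto the first factor (rescaled by $\zeta$), restricted to the real points; since conjugation swaps the two factors, this projection is injective on $\mathfrak{g}$, which is exactly why the paper can bypass both the descent formalism and your brute-force term-by-term expansion of the final formula, at the cost of one manageable computation in $\mathfrak{b}_1^{\mathbb{C}}$.

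One concrete caution about your verification plan: the displayed formula in the statement contains typos, and your reading of the boost block as ``$w$ together with $\mathrm{Im}(\zeta)x+\mathrm{Im}(\zeta^2)y+\mathrm{Im}(\zeta^3)z$'' inherits one of them. Composing the three maps above gives boost block $\mathrm{Im}(w+\zeta x+\zeta^2 y+\zeta^3 z) = \mathrm{Im}(\zeta)x+\mathrm{Im}(\zeta^2)y+\mathrm{Im}(\zeta^3)z$ with no $w$-term (as $w$ is real), and the lower-left block must be the transpose of this vector, not of the rotation vector; as printed, the matrices are not even elements of $\so(3,1)$, so a literal term-by-term match would fail. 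Your own rule ``rotation block $=\mathrm{Re}$, boost block $=\mathrm{Im}$ of $w+\zeta x+\zeta^2 y+\zeta^3 z$'' already produces the corrected map, which is consistent with the entries $B_x$, $C_w$ in Table~2 (where $w$ appears only in the rotation block of $C_w$); run your check against that version and everything closes. For the same reason, note that the paper's displayed definition of $\psi_2$ reads $w+\zeta^3 y$, while its own bracket computation and the final composed formula require $w+\zeta^2 y$.
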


\begin{proof}
We denote the complexification of $\mathfrak{b}_1$ by $\mathfrak{b}_1^\mathbb{C}$ and define
$$\psi_2: \mathfrak{g} \to \mathfrak{b}_1^\mathbb{C}, \qquad \psi_2\begin{pmatrix} x \\ y \\ z \\ w \end{pmatrix} = \left( \begin{array}{cc} \zeta(x+ \zeta^2 z) \\ w + \zeta^3 y \end{array} \right).$$
Then we have 
\begin{align*}  & \left[ \psi_2 \begin{pmatrix} x_1 \\ y_1 \\ z_1 \\w_1 \end{pmatrix}, \psi_2 \begin{pmatrix} x_2 \\ y_2 \\ z_2\\ w_2\end{pmatrix}  \right]
= \begin{pmatrix} (w_1 + \zeta^2 y_1) \times \zeta(x_2 + \zeta^2 z_2) + \zeta (x_1 + \zeta^2 z_1) \times (w_2 + \zeta^2 y_2) \\ \zeta^2(x_1 + \zeta^2 z_1) \times (x_2 + \zeta^2 z_2) + (w_1 + \zeta^2 y_1) \times (w_2+ \zeta^2 y_2) \end{pmatrix}  \\
& = \psi_2 \begin{pmatrix} w_1 \times x_2 + x_1 \times w_2 \\ w_1 \times y_2 + y_1 \times w_2 + x_1 \times x_2 \\ y_1 \times x_2 + x_1 \times y_2  + w_1 \times z_2 + z_1 \times w_2 \\ w_1 \times w_2 \end{pmatrix}  \\
& \quad - |\zeta|^4 \psi_2 \begin{pmatrix}
y_1 \times z_2+ z_1 \times y_2  \\ z_1 \times z_2 \\ 0 \\ x_1 \times z_2 + z_1 \times x_2  + y_1 \times y_2 + 2\mathrm{Re}(\zeta^2) z_1 \times z_2 \end{pmatrix}
\\
&\quad + 2 \mathrm{Re}(\zeta^2) \psi_2 \begin{pmatrix} 0 \\ x_1 \times z_2 + z_1 \times x_2 + y_1 \times y_2 + 2 \mathrm{Re}(\zeta^2) z_1 \times z_2   \\ y_1 \times z_2+ z_1 \times y_2  \\ 0 \end{pmatrix}. \end{align*}
Hence $\psi_2$ is a Lie algebra isomorphism.
Next, we note that $\mathfrak{b}_1$ is isomorphic to $\so(3) \oplus \so(3)$ through the map \[(x,y) \longmapsto (\star^{-1} (y+x), \star^{-1} (y-x)).\] Finally, we know that $\mathfrak{so}(3)^\mathbb{C}$ is isomorphic to $\mathfrak{so}(3,1)$ though the map
\[(\star^{-1} x) \longmapsto \begin{pmatrix} \star^{-1} \mathrm{Re}(x) & \mathrm{Im}(x) \\ \mathrm{Im}(x)^t & 0\end{pmatrix}.\]
Composing these maps gives us the desired Lie algebra isomorphism.
\end{proof}

\begin{lemma} \label{lemma:exceptional}
Assume that $a^2_1 + 4a_2 =0$ and write $\kappa = \sqrt{|a_1|/2}$. If $a_1 >0$, then $\mathfrak{g}$ is isomorphic to the semi-direct product $\so(4) \ltimes_\theta (\mathbb{R}^3 \times \mathbb{R}^3)$ where $\theta: \so(4) \to \mathfrak{der}(\mathbb{R}^3 \times \mathbb{R}^3)$ is given by
\[\theta\begin{pmatrix} \star^{-1} w & x \\ - x^t & 0 \end{pmatrix}\begin{pmatrix}  z \\ y \end{pmatrix} = \begin{pmatrix}  w \times z-  x \times y \\ w \times y - x \times z \end{pmatrix}.\]
A Lie algebra isomorphism $\varphi$ between $\mathfrak{g}$ and $\so(4) \ltimes_\theta (\mathbb{R}^3 \times \mathbb{R}^3)$ is given by
\[\varphi\begin{pmatrix} x \\ y \\ z \\ w \end{pmatrix} = \begin{pmatrix} \star^{-1} (w + \kappa^2 y) &  - \kappa x - \kappa^3 z \\ \kappa x^t + \kappa^3 z^t & 0 \end{pmatrix} \ltimes \begin{pmatrix} \frac{3}{2}\kappa^3 z + \frac{\kappa}{2} x \\ \kappa^2 y \end{pmatrix}.\]
Similarly, if $a_1 <0$ then $\mathfrak{g}$ is isomorphic to $\so(3,1) \ltimes_\theta (\mathbb{R}^3 \times \mathbb{R}^3)$ where $\theta: \so(3,1) \to \mathfrak{der}(\mathbb{R}^3 \times \mathbb{R}^3)$ is given by
\[\theta\begin{pmatrix} \star^{-1} w & x \\ x^t & 0 \end{pmatrix}\begin{pmatrix}  z \\ y  \end{pmatrix} = \begin{pmatrix}  w \times z +  x \times y \\ w \times y - x \times z \end{pmatrix}. \]
A Lie algebra isomorphism $\varphi$ between $\mathfrak{g}$ and $\so(3,1) \ltimes_\theta (\mathbb{R}^3 \times \mathbb{R}^3)$ is given by
\[\varphi\begin{pmatrix} x \\ y \\ z \\ w \end{pmatrix} = \begin{pmatrix} \star^{-1} (w - \kappa^2 y) &   \kappa x - \kappa^3 z \\ \kappa x^t - \kappa^3 z^t & 0 \end{pmatrix} \ltimes \begin{pmatrix} \frac{3}{2}\kappa^3 z - \frac{\kappa}{2} x \\ \kappa^2 y \end{pmatrix}.\]

\end{lemma}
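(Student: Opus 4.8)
The plan is to verify directly that the explicitly given map $\varphi$ is a Lie algebra isomorphism, in the same spirit as the proofs of Lemmas~\ref{C33Lemma1} and~\ref{C33Lemma2}. The governing relations come from the hypothesis $a_1^2 + 4a_2 = 0$: writing $\kappa = \sqrt{|a_1|/2}$ one has $a_2 = -\kappa^4$ in both cases, together with $a_1 = 2\kappa^2$ when $a_1 > 0$ and $a_1 = -2\kappa^2$ when $a_1 < 0$. First I would observe that $\varphi$ is a linear bijection: reading off its four components, the coordinates decouple into linear subsystems (first $y$, then $w$, then the pair $(x,z)$) that are invertible because $\kappa \neq 0$. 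Once bijectivity is in hand, it suffices to check that $\varphi$ intertwines the bracket~\eqref{BracketsC33} on $\mathfrak{g}$ with the semidirect-product bracket on the target; bracket-preservation together with bijectivity then automatically shows that the target is a genuine Lie algebra (in particular that $\theta$ is a representation) and that $\varphi$ is an isomorphism, so no separate Jacobi-identity verification is needed.

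For the computation I would evaluate $\varphi$ on the bracket~\eqref{BracketsC33} (specialised to $a_1 = \pm 2\kappa^2$, $a_2 = -\kappa^4$) and compare it component by component with the semidirect-product bracket of the images. Writing $\varphi X_i = \Lambda_i \ltimes V_i$ with $\Lambda_i$ in $\so(4)$ (resp.\ $\so(3,1)$) and $V_i \in \mathbb{R}^3 \times \mathbb{R}^3$, the latter bracket is
\[ [\Lambda_1 \ltimes V_1, \Lambda_2 \ltimes V_2] = [\Lambda_1, \Lambda_2] \ltimes \bigl( \theta(\Lambda_1) V_2 - \theta(\Lambda_2) V_1 \bigr). \]
Its $\so$-component is the matrix commutator of the blocks $\Lambda_i$, which via the identity $A x = (\star A) \times x$ reduces to cross-product expressions in $x_i, y_i, z_i, w_i$; its $V$-component is read off directly from the displayed formula for $\theta$. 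Matching these two components against the $\so$- and $V$-parts of $\varphi$ applied to~\eqref{BracketsC33} is the entire content of the verification. Throughout I would use bilinearity and antisymmetry of the cross product together with $a \times (b \times c) = \langle a, c\rangle b - \langle a, b\rangle c$, collecting terms according to which of $x, y, z, w$ they contain.

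The main obstacle, and the reason this boundary case requires separate treatment, is the degeneracy $k = \hat k = a_1/2$: the splitting $\mathfrak{g} \cong \mathfrak{b}_k \oplus \mathfrak{b}_{\hat k}$ of Lemma~\ref{C33Lemma1} collapses as the two roots of $t^2 - a_1 t - a_2$ coalesce, so $\mathfrak{g}$ acquires a non-split, \emph{Jordan-block} structure with a six-dimensional abelian radical on which $\so(4)$ (resp.\ $\so(3,1)$) acts through $\theta$. Recognizing this radical and, more delicately, pinning down the exact coefficients in the $V$-component of $\varphi$ -- the factors $\tfrac{3}{2}\kappa^3$ and $\tfrac{1}{2}\kappa$ multiplying $z$ and $x$ -- is what forces the terms carrying the highest powers of $\kappa$ (those proportional to $a_2 = -\kappa^4$) to cancel correctly against the lower-order contributions. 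The bookkeeping of these $\kappa$-powers in the $z$-heavy terms is where I expect the verification to be most error-prone.

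Finally, the cases $a_1 > 0$ and $a_1 < 0$ are entirely parallel: they differ only by the sign in the off-diagonal block distinguishing $\so(4)$ from $\so(3,1)$, and by the corresponding sign changes in $\varphi$, reflecting the two real forms of the complexification $\so(4)^{\mathbb{C}}$. Carrying a sign $\sigma = \sgn(a_1)$ with $a_1 = 2\sigma\kappa^2$ through the computation handles both at once, so it is enough to verify one case in detail and record the sign substitutions for the other. One could alternatively try to obtain $\varphi$ as a rescaled limit of the isomorphism of Lemma~\ref{C33Lemma1} as $a_1^2 + 4a_2 \to 0$, but the direct check is the most transparent route.
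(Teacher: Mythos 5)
Your proposal is correct, and it would work: with $a_1 = 2\sigma\kappa^2$, $a_2 = -\kappa^4$, the stated $\varphi$ is a linear bijection (the $(x,z)$-block $\begin{pmatrix} -\kappa & -\kappa^3 \\ \kappa/2 & 3\kappa^3/2 \end{pmatrix}$ has determinant $-\kappa^4 \neq 0$, and $y$, $w$ decouple as you say), and your observation that intertwining plus bijectivity pushes the Jacobi identity forward, so that no separate check that $\theta$ is a representation is needed, is a legitimate economy. The difference from the paper's proof is one of organization rather than substance: instead of verifying the composite formula in one monolithic computation, the paper first rescales $(x,y,z,w)' = (\kappa^{-1}x, \kappa^{-2}y, \kappa^{-3}z, w)$ to normalize to $a_1 = 2s$, $a_2 = -1$ with $s = \sgn(a_1)$ (your $\sigma$), then applies two shears, $(x,y,z,w)'' = (x-sz,\, y,\, z,\, w-sy)'$ and $(x,y,z,w)''' = (-sx,\, y,\, z + x/2,\, w)''$, recomputing the bracket \eqref{BracketsC33} after each substitution until it takes exactly the semidirect-product form; the explicit $\varphi$ of the statement is then obtained by composing and inverting these three maps. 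That staged factorization is what \emph{produces}, rather than merely confirms, the coefficients $\tfrac{3}{2}\kappa^3$ and $\tfrac{\kappa}{2}$ you correctly flagged as the error-prone spot (they fall out of inverting the second shear, via $\kappa^3 z + \tfrac{s}{2}(\kappa x + s\kappa^3 z) = \tfrac{3}{2}\kappa^3 z + \tfrac{s\kappa}{2}x$), and it keeps each intermediate bracket computation small; your one-shot check buys logical minimality, since the formula is already displayed in the statement, and your unified sign $\sigma$ is exactly the paper's $s$, so treating both real forms at once matches the paper. Your structural heuristic is also accurate: the coalescing roots $k = \hat{k}$ of $t^2 - a_1 t - a_2$ destroy the splitting $\mathfrak{b}_k \oplus \mathfrak{b}_{\hat k}$ of Lemma~\ref{C33Lemma1} and leave a six-dimensional abelian ideal acted on through $\theta$, and the paper's shear substitutions are precisely the change of basis adapted to that generalized-eigenvector (Jordan-block) structure.
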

\begin{proof}
We will construct the isomorphism as the composition of three maps. Define $\kappa = \sqrt{|a_1|/2}$ such that $a_2= - \kappa^4$. Write $s = \sgn(a_1)$. By doing a scaling $(x,y,z,w)' = (\kappa^{-1} x, \kappa^{-2} y, \kappa^{-3} z, w)$, we get the following expression for the Lie brackets
\begin{align*}  \left[\begin{pmatrix} x_1 \\ y_1 \\ z_1 \\ w_1 \end{pmatrix}' ,\begin{pmatrix} x_2 \\ y_2 \\ z_2 \\ w_2 \end{pmatrix}' \right]
& = \begin{pmatrix}
w_1 \times x_2 + x_1 \times w_2 \\
w_1 \times y_2 + y_1 \times w_2   \\
w_1 \times z_2 + z_1 \times w_2 \\
w_1 \times w_2 \end{pmatrix}'
+\begin{pmatrix}
0 \\ x_1 \times x_2  \\ y_1 \times x_2 + x_1 \times y_2 \\ 0 \end{pmatrix}'
\\ \nonumber & \qquad + 2s \begin{pmatrix}
0 \\ x_1 \times z_2 + z_1 \times x_2 + y_1 \times y_2 + 2s z_1 \times z_2 \\
y_1 \times z_2 + z_1 \times y_2 \\ 0 \end{pmatrix}'
\\ \nonumber
& \qquad - \begin{pmatrix}y_1 \times z_2 + z_1 \times y_2 \\ z_1 \times z_2  \\
0 \\ x_1 \times z_2 + z_1 \times x_2 + y_1 \times y_2 + 2 s  z_1 \times z_2 \end{pmatrix}'.
\end{align*}
Next, if we write $(x,y,z,w)'' = (x- sz, y, z, w - s y)'$ we obtain 
{\tiny \begin{align*} \left[\begin{pmatrix} x_1 \\ y_1 \\ z_1 \\ w_1 \end{pmatrix}'',\begin{pmatrix} x_2 \\ y_2 \\ z_2 \\ w_2 \end{pmatrix}'' \right]
& = \begin{pmatrix}
(w_1-sy_1) \times (x_2 - sz_2) + (x_1 -s z_1) \times (w_2 - sy_2) \\
(w_1 -sy_1) \times y_2 + y_1 \times (w_2 -sy_2) + (x_1- sz_1) \times (x_2-sz_2)  \\
(w_1- sy_1) \times z_2 + z_1 \times (w_2- sy_2) + y_1 \times (x_2- s z_2) + (x_1- sz_1) \times y_2\\
(w_1- sy_1) \times (w_2- sy_2) \end{pmatrix}' \\
& \qquad + 2s \begin{pmatrix}
0 \\ (x_1-sz_1) \times z_2 + z_1 \times (x_2- s z_2) + y_1 \times y_2 + 2s z_1 \times z_2 \\
y_1 \times z_2 + z_1 \times y_2 \\ 0 \end{pmatrix}'
\\ \nonumber
& \qquad - \begin{pmatrix}y_1 \times z_2 + z_1 \times y_2 \\ z_1 \times z_2  \\
0 \\ (x_1- sz_1) \times z_2 + z_1 \times (x_2-sz_1) + y_1 \times y_2 + 2s z_1 \times z_2 \end{pmatrix}' \\
& = \begin{pmatrix}
w_1 \times x_2 -sy_1 \times x_2  +  x_1 \times w_2 - s x_1 \times y_2 -s w_1 \times z_2 - s z_1 \times w_2 \\
w_1  \times y_2 + y_1 \times w_2 + x_1 \times x_2 + sz_1 \times x_2 + s  x_1 \times z_2   \\
w_1 \times z_2 + z_1 \times w_2 + y_1 \times x_2+ x_1 \times y_2\\
w_1 \times w_2 - s y_1 \times w_2 - s w_1 \times y_2 - s^2(x_1 \times z_2 + z_1 \times x_2  ) \end{pmatrix}' \\
& = \begin{pmatrix}
w_1 \times x_2  +  x_1 \times w_2\\
w_1  \times y_2 + y_1 \times w_2 + x_1 \times x_2 + s (z_1 \times x_2 +  x_1 \times z_2)  \\
w_1 \times z_2 + z_1 \times w_2 + y_1 \times x_2+ x_1 \times y_2\\
w_1 \times w_2 + s x_1 \times x_2  \end{pmatrix}''.
\end{align*} }
Finally, if we define $(x, y, z, w)''' = (-s x, y, z + x/2, w)''$ then the Lie bracket becomes
{\tiny \begin{align*} \left[\begin{pmatrix} x_1 \\ y_1 \\ z_1 \\ w_1 \end{pmatrix}''',\begin{pmatrix} x_2 \\ y_2 \\ z_2 \\ w_2 \end{pmatrix}''' \right]
& = \begin{pmatrix}
 - s w_1 \times x_2  - s  x_1 \times w_2\\
w_1  \times y_2 + y_1 \times w_2  - s^2  (z_1 \times x_2 + x_1 \times z_2)  \\
w_1 \times (z_2+x_2/2) + (z_1 +x_1/2) \times w_2 - s y_1 \times x_2 -2s x_1 \times y_2\\
w_1 \times w_2 + s^3 x_1 \times x_2  \end{pmatrix}'' \\
& = \begin{pmatrix}
w_1 \times x_2  +  x_1 \times w_2\\
w_1  \times y_2 + y_1 \times w_2  -   (z_1 \times x_2 + x_1 \times z_2)  \\
w_1 \times z_2 + z_1 \times w_2 - s (y_1 \times x_2 + x_1 \times y_2)\\
w_1 \times w_2 + s x_1 \times x_2  \end{pmatrix}''',
\end{align*}}
which are the exactly the desired form of the brackets, showing that the Lie algebras are isomorphic. To get an explicit expression, we combine these maps and inverting them to get
\begin{align*}
& (x,y,z,w) = (\kappa x, \kappa^2 y, \kappa^3 z, w)' \\
& = (\kappa x + s \kappa^3 z, \kappa^2 y, \kappa^3 z, w + s \kappa^2 y)'' \\
& = ( - s \kappa x - \kappa^3 z, \kappa^2 y,  \frac{3}{2}\kappa^3 z + \frac{s \kappa}{2} x, w + s \kappa^2 y)'''.
\end{align*}
\end{proof}

\begin{proof}[Proof of Theorem~\ref{th:AllC33}]
\begin{enumerate}[(a)]
\item 
If $a_2 > 0$, then $\hat k = - \hat \kappa^2 < 0 < k = \kappa^2$. By Lemma~\ref{C33Lemma1}, we have the following isomorphism $\mathfrak{g} \to \mathfrak{b}_{\kappa^2} \oplus \mathfrak{b}_{- \hat \kappa^2} \to \mathfrak{b}_{1} \oplus \mathfrak{b}_{-1} \to \so(4) \otimes \so(3,1)$,
\begin{align*}
& \begin{pmatrix} x \\ y \\ z \\ w \end{pmatrix} \longmapsto \begin{pmatrix} x + \kappa^2 z \\ w + \kappa^2 y \end{pmatrix} \oplus \begin{pmatrix} x - \hat \kappa^2 z \\ w - \hat \kappa^2 y \end{pmatrix} \longmapsto \begin{pmatrix} \kappa(x + \kappa^2 z) \\ w + \kappa^2 y \end{pmatrix} \oplus \begin{pmatrix} \hat \kappa (x - \hat \kappa^2 z) \\ w - \hat \kappa^2 y \end{pmatrix} \\ &  \longmapsto \begin{pmatrix} \star^{-1} (w + \kappa^2 y) & \kappa(x + \kappa^2 z) \\ - \kappa (x + \kappa^2 z)^t & 0  \end{pmatrix} \oplus \begin{pmatrix} \star^{-1} ( w + \hat \kappa^2 y) & \hat \kappa (x - \hat \kappa^2 z) \\ \hat \kappa^2 (x - \hat \kappa^2 z)^t  & 0  \end{pmatrix}.
\end{align*}
By Lemma~\ref{lemma:HolCZero}, we can consider $\mathfrak{p}_c$ with $c = \sqrt{k} = \kappa$. Its image in $\so(4) \times \so(3,1)$ equals
$$\begin{pmatrix} \kappa \star^{-1} x & \kappa x \\ - \kappa x^t & 0  \end{pmatrix} \oplus \begin{pmatrix} \kappa \star^{-1} x & \hat \kappa x \\ \hat \kappa x^t & 0  \end{pmatrix}.$$
This subspace generate the Lie subalgebra
$$\begin{pmatrix} \star^{-1} \tilde x & \tilde x \\ - \tilde x^t & 0  \end{pmatrix} \oplus \begin{pmatrix} \star^{-1} \tilde y & \tilde z \\ \tilde z^t & 0  \end{pmatrix} , \qquad \tilde x, \tilde y, \tilde z \in \mathbb{R}^3.$$
Moreover, this Lie subalgebra is isomorphic to $\so(3) \times \so(3,1)$ through the map
$$\begin{pmatrix} \star^{-1} \tilde x & \tilde x \\ - \tilde x^t & 0  \end{pmatrix} \oplus \begin{pmatrix} \star^{-1} \tilde y & \tilde z \\ \tilde z^t & 0  \end{pmatrix} \longmapsto \begin{pmatrix} 2\star^{-1} \tilde x  \end{pmatrix} \oplus \begin{pmatrix} \star^{-1} \tilde y & \tilde z \\ \tilde z^t & 0  \end{pmatrix}.$$
The other cases are proved similarly.
\item The cases in (b) with $a_1^2 \neq 4a_2$ follow from Lemma~\ref{C33Lemma1}, Lemma~\ref{C33Lemma2}, and by similar techniques as with the cases in (a).
\end{enumerate}
\end{proof}

\subsection{Application to Other Dimensions} In general, we can define $\mathsf{C}_{n,3}$ as the connected and simply connected Lie group whose Lie algebra can be considered as $\mathbb{R}^n \times \so(n) \times \mathbb{R}^n$ with brackets,
$$[(x, A, u) , (y,B,v)] = (0, y x^t - x y^t, Ay - Bx) , \qquad x,y,u,v \in \mathbb{R}^n, \, A,B \in \so(n).$$
If we define a sub-Riemannian structure by left translation of elements $(x, 0,0)$ with inner product $\langle (x,0,0) , (y,0,0) \rangle= \langle x, y\rangle$, then $\mathsf{C}_{n,3}$ becomes a Carnot model space with growth vector \[\left(n, \frac{1}{2} n(n+1), \frac{1}{2} n(n+2)\right).\] For the case $n=2$, we have that $\mathsf{C}_{2,3} = \mathsf{F}[2,3]$ is the only Carnot model space of step three and rank two.
The results of \cite[Theorem 6.1]{sub_Riemannian_Model_Spaces} also characterizes all model spaces with this Carnot model space as their nilpotentization in terms of two parameters $(a_1, a_2) \in \mathbb{R}^2$. The proofs we have presented above that does not rely on the specific properties of dimension $n = 3$ can be converted into the case of arbitrary $n$. To be more precise, results still hold as long as they do not rely on the other partial connections $\nabla^c$ invariant under orientation preserving isometries or the Lie algebra isomorphism between $\so(4)$ and $\so(3) \oplus \so(3)$. This means that the following holds for $(M, H, g) := \mathsf{C}_{n,3}(a_1, a_2)$:
\begin{enumerate}[$\bullet$]
\item If $a_2 = 0$ and $a_1 \neq 0$, then $M$ is a Lie group with Lie algebra $\mathbb{R}^n \oplus \so(n+1)$ or $\mathbb{R}^n \oplus \so(n,1)$ when $a_1$ is respectively positive or negative. The sub-Riemannian structure $(H, g)$ is defined by left translation of elements
$$A_x = (x) \oplus \begin{pmatrix} 0 & \sqrt{|a_1|} x \\ - \mathrm{sign}(a_1) \sqrt{|a_1|} x^t & 0 \end{pmatrix},$$
with inner product $\langle A_x, A_y \rangle = \langle x, y \rangle$.
\item If $a_2 \neq 0$ and $a_1^2 + 4a_2 >0$, then $M = G/K$ where $G$ is the isometry group and $K$ is the isotropy group corresponding to a fixed point. The sub-Riemannian structure $(H,g)$ on $M$ is the projection of a left-invariant sub-Riemannian structure on $G$ defined by left translation of elements $\{ B_x \}_{x \in \mathbb{R}^n}$ with $\langle B_x, B_y \rangle = \langle x, y \rangle$ and $\mathfrak{g}$ and $B_x$ is given by Table~3.
\begin{table}[H]
\centering
\begin{tabular}{|l||l|c|  } \hline
Cases & $\mathfrak{g}$  & Basis $B_x$ , $x \in \mathbb{R}^3$, Basis $C_w$, $w \in \mathbb{R}^3$  \\ \hline
$a_2 >0$ & $\mathfrak{so}(n+1) \oplus \mathfrak{so}(n,1)$ & $B_x = \begin{pmatrix} 0 & \kappa x \\ - \kappa x^t & 0 \end{pmatrix} \oplus \begin{pmatrix} 0 & \hat \kappa x \\ \hat \kappa x^t & 0 \end{pmatrix}$  \\ & & \\ \hline
$a_2 <0$, $2 \sqrt{|a_2|} < a_1$ & $\mathfrak{so}(n+1) \oplus \mathfrak{so}(n+1)$ & $B_x = \begin{pmatrix} 0 & \kappa x \\ - \kappa x^t & 0 \end{pmatrix} \oplus \begin{pmatrix} 0 & \hat \kappa x \\ - \hat \kappa x^t & 0 \end{pmatrix}$  \\ & & \\ \hline
$a_2 <0$, $2\sqrt{|a_2|} < -a_1$ & $\mathfrak{so}(n,1) \oplus \mathfrak{so}(n,1)$ & $B_x = \begin{pmatrix} 0 & \kappa x \\ \kappa x^t & 0  \end{pmatrix} \oplus \begin{pmatrix} 0 & \hat \kappa x \\  \hat \kappa x^t & 0  \end{pmatrix}$ \\ & & \\
\hline
\end{tabular}
\caption{Here $\kappa = \frac{1}{\sqrt{2}}  \left|a_1 + \sqrt{a_1^2 + 4a_2}\right|^{1/2}$ and $\hat \kappa = \frac{1}{\sqrt{2}}  \left|a_1 - \sqrt{a_1^2 + 4a_2}\right|^{1/2}$.}
\end{table}
In all of these cases, the Lie algebra of $\mathfrak{k}$ consists of elements
$$C_A = \begin{pmatrix} A & 0 \\ 0 & 0 \end{pmatrix} \oplus \begin{pmatrix} A & 0 \\ 0 & 0 \end{pmatrix}, \qquad A \in \so(n).$$

\item If $a_1^2 + 4a_2 <0$, then $M = G/K$ where $G$ is the isometry group with Lie algebra $\so(n+1, \mathbb{C})$, $K$ is an isotropy group with Lie algebra $\mathfrak{k} = \{ C_A \, : \, A \in \so(n) \}$ given by
\[C_A = \begin{pmatrix} A & 0 \\ 0 & 0 \end{pmatrix}.\]
The sub-Riemannian structure $(H,g)$ on $M$ is the projection of a left-invariant sub-Riemannian structure on $G$ given by
$$B_x = \begin{pmatrix} 0 & \zeta x \\ - \zeta x^t  & 0 \end{pmatrix}, \qquad \langle B_x, B_y \rangle = \langle x, y \rangle,$$
with $\zeta =\frac{1}{\sqrt{2}}  \sqrt{a_1 + \sqrt{a_1^2 + 4a_2} }$ (principal root).
\item If $a^2_1 + 4 a_2 =0$, with $\kappa = \sqrt{|a_1|/2}$, then the isometry algebra is isomorphic to respectively $\so(n+1) \times_\theta (\mathbb{R}^n \times \mathbb{R}^{n(n-1)/2})$ or $\so(n,1) \times_\theta (\mathbb{R}^n \times \mathbb{R}^{n(n-1)/2})$ depending on the sign of $a_1$. If we identify $\mathbb{R}^{n(n-1)}$ with the space of anti-symmetric matrices without the Lie bracket, then
\[\theta\begin{pmatrix} A & x \\ \mp x^t & 0 \end{pmatrix}\begin{pmatrix} z \\ Y \end{pmatrix} = \begin{pmatrix}  Az \pm Yx  \\ [W , Y] - x \wedge z \end{pmatrix} .\]
The isotropy group $K$ has Lie algebra $\mathfrak{k} = \{ C_A \, : \, A \in \so(n) \}$ given by
\[C_A = \begin{pmatrix} A & 0 \\ 0 & 0 \end{pmatrix} \ltimes \begin{pmatrix} 0 \\ 0 \end{pmatrix},\]
and $(H,g)$ is the projection of a left-invariant sub-Riemannian structure on $G$ given by
\[B_x = \begin{pmatrix} 0 & \kappa x \\ \mp \kappa x^t  & 0 \end{pmatrix} \ltimes \begin{pmatrix} \mp \frac{\kappa}{2} x \\ 0 \end{pmatrix}, \qquad \langle B_x, B_y \rangle = \langle x, y \rangle.\]
\end{enumerate}

\section{Model spaces with nilpotentization $\mathsf{A}_{3,3}$}
\label{sec:(3,6,11)-classification}
For the model spaces $(M,H,g)$ with nilpotentization isometric to $\mathsf{A}_{3,3}$, we have no previous result to rely on. We will show that other than the nilpotentization, the only sub-Riemannian structures possible relate to different real forms of the exceptional (complex) Lie group $\mathrm{G}_2$.
Let $\mathfrak{g}_2$ denote the Lie algebra of $\mathrm{G}_2$ and let $\mathfrak{g}_2^s$ and $\mathfrak{g}^c_2$ denote respectively its split form and its compact form. We will present a description of these that can be found in the paper \cite{Dra18}.

We first start with the split form $\mathfrak{g}_2^s$. This can be considered as the space of matrices
$$\begin{pmatrix} 0 & -2 y^t & -2x^t \\ x & S + \star^{-1} w & \star^{-1} y \\ y & \star^{-1} x & - S + \star^{-1} w \end{pmatrix}, \qquad x,y, w \in \mathbb{R}^3, \, S \in \mathfrak{s}.$$
For the compact form $\mathfrak{g}^c_2$, we can consider it as the space of pairs \[(iS + \star^{-1} w, y+i x) \in \mathfrak{su}(3) \oplus \mathbb{C}^3, \quad x, y, w \in \mathbb{R}^3, \, S \in \mathfrak{s}.\] The Lie brackets are given by
$$\left[ \begin{pmatrix}  i S_1 + \star^{-1} w_1 \\  0 \end{pmatrix} , \begin{pmatrix}  i S_2 + \star^{-1} w_2 \\ 0 \end{pmatrix} \right] = \begin{pmatrix} [iS_1+ \star^{-1} w_1, i S_2 + \star^{-1} w_2]  \\ 0
\end{pmatrix},$$
$$\left[ \begin{pmatrix}  i S + \star^{-1} w \\  0 \end{pmatrix} , \begin{pmatrix}  0 \\ y +i x \end{pmatrix} \right] = \begin{pmatrix} 0 \\
(iS+ \star^{-1} w)(y+ix) 
\end{pmatrix},$$
\begin{align*} &\left[ \begin{pmatrix} 0 \\  y_1 +i x_1 \end{pmatrix} , \begin{pmatrix} 0 \\ y_2 +i x_2 \end{pmatrix} \right] \\
& = \begin{pmatrix}  3 (y_2 + i x_2)(y_1-i x_1)^t - 3 (y_1+ i x_1)(y_2 -i x_2)^t  +2i (\langle x_1, y_2 \rangle - \langle y_1, x_2 \rangle)I \rangle \\
2 (y_1-i x_1) \times (y_1 - i x_2) \end{pmatrix} \\
& = \begin{pmatrix}  6i x_2 \otimes y_1 - 6i x_1 \otimes y_2  + x_1 \times x_2 + y_1 \times y_2 \\
2 y_1 \times y_2 - 2x_1 \times x_2 - 2i (x_1 \times y_2 + y_1 \times x_2 )\end{pmatrix},\end{align*}
where the symbol $\times$ denotes the cross product on $\mathbb{R}^3$ extended by linearity. Using these models, we have the following explicit description.
\begin{theorem} \label{th:A33}
Let $(M, H, g)$ be a sub-Riemannian model space with nilpotentization isometric to $\mathsf{A}_{3,3}$. Write $M = G/K$, where $G = \Isom(M)$ and $K$ is the isotropy group of some point. Let $\mathfrak{g}$ and $\mathfrak{k}$ be Lie algebras of respectively $G$ and~$K$. Then $(M, H,g)$ is isometric to one of the following cases after an appropriate scaling of the metric.
\begin{enumerate}[\rm (i)]
\item The sub-Riemannian model space $M$ is isometric to $\mathsf{A}_{3,3}$,
\item The Lie algebra $\mathfrak{g}$ is isomorphic to $\mathfrak{g}_2^s$ and
$$\mathfrak{k} = \left\{ C_w =\begin{pmatrix} 0 & 0 & 0 \\ 0 & \star^{-1} w & 0\\ 0 & 0 & \star^{-1} w \end{pmatrix} : w \in \mathbb{R}^3 \right\}.$$
The sub-Riemannian structure $(H,g)$ is the projection of a sub-Riemannian structure on $G$ defined by left translation of
$$\mathfrak{p} = \left\{ B_x = \begin{pmatrix} 0 & 2 x^t & -2x^t \\ x & 0 & -\star^{-1} x \\ -x & \star^{-1} x & 0 \end{pmatrix} : x \in \mathbb{R}^3 \right\},$$
with inner product $\langle B_x, B_y \rangle = \langle x, y\rangle$.
\item The Lie algebra $\mathfrak{g}$ is isomorphic to $\mathfrak{g}_2^c$ and
$$\mathfrak{k} = \left\{ C_w = (\star^{-1} w, 0) \in \mathfrak{su}(3) \times \mathbb{C}^3 : w \in \mathbb{R}^3 \right\}.$$
The sub-Riemannian structure $(H,g)$ is the projection of a sub-Riemannian structure on $G$ defined by left translation of
$$\mathfrak{p} = \left\{ B_x = (0, ix)\in \mathfrak{su}(3) \times \mathbb{C}^3 : x \in \mathbb{R}^3 \right\},$$
with inner product $\langle B_x, B_y \rangle = \langle x, y\rangle$.
\end{enumerate}
\end{theorem}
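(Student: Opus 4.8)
The plan is to reconstruct the isometry algebra $\mathfrak{g}$ of such an $M$ from its infinitesimal data and the requirement that it be a filtered deformation of its nilpotentization, exactly as in the treatment of $\mathsf{C}_{3,3}$. Write $M = G/K$ with $G = \Isom(M)$ and $K = K_x$. The integrability condition (with $x=y$) together with Proposition~\ref{Regularity_of_isometries} identifies $K$ with $\Ort(3)$ via its action on $H_x \simeq \mathbb{R}^3$, so I would analyze everything as a representation of $\Ort(3)$. The reductive splitting $\mathfrak{g} = \mathfrak{k} \oplus \mathfrak{m}$ of Proposition~\ref{canonical_connection_theorem}, filtered by the sub-Riemannian flag, has associated graded $\mathfrak{k} \oplus \Nil(M)$; since $\Nil(M) \simeq \mathsf{A}_{3,3}$ and $\mathfrak{k} \simeq \mathfrak{so}(3) \simeq \bar{\mathbb{R}}^3$ one obtains the $\Ort(3)$-module decomposition
\[\mathfrak{g} \simeq \underbrace{\bar{\mathbb{R}}^3}_{\mathfrak{k}} \oplus \underbrace{\mathbb{R}^3}_{\mathfrak{p}} \oplus \underbrace{\bar{\mathbb{R}}^3}_{V_2} \oplus \underbrace{\bar{\mathfrak{s}}}_{V_3},\]
where $\mathfrak{p}$ is the canonical subspace of Proposition~\ref{canonical_connection_theorem}~(b). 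In particular $\dim \mathfrak{g} = 14$, which is the first hint that $\mathfrak{g}_2$ will appear.

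Next I would pin down the bracket. The brackets with $\mathfrak{k}$ are fixed by the module structure, and the remaining brackets are $\Ort(3)$-equivariant alternating maps that must respect the filtration $\mathfrak{g}^0 = \mathfrak{k} \subseteq \mathfrak{g}^{-1} = \mathfrak{k} \oplus \mathfrak{p} \subseteq \mathfrak{g}^{-2} = \mathfrak{g}^{-1} \oplus V_2 \subseteq \mathfrak{g}^{-3} = \mathfrak{g}$ and induce on the associated graded precisely the nilpotentization bracket of $\mathsf{A}_{3,3}$ read off from \eqref{nilpotent_bracket_free}. By Lemma~\ref{All_The_Invariant_Maps} together with the duality $\bar V_1 \otimes V_2 \simeq V_1 \otimes \bar V_2 \simeq \overline{V_1 \otimes V_2}$, each graded component of each bracket is a scalar multiple of a single canonical equivariant map built from $\times$, $\odot$ and $\star$; matching the top-filtration parts to \eqref{nilpotent_bracket_free} fixes those scalars, while the lower-order corrections (for instance the $\mathfrak{k}$-component of $[\mathfrak{p},\mathfrak{p}]$, which is the curvature of the canonical connection) introduce only finitely many real parameters. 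This reduces the problem to a finite-dimensional family of candidate brackets on a fixed $14$-dimensional module.

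I would then impose the Jacobi identity. Because every term is $\Ort(3)$-equivariant, Schur's lemma collapses each Jacobi relation evaluated on the graded summands to a handful of scalar polynomial equations in the parameters; bracket-generation of $\mathfrak{p}$, and the freedom to replace $\mathfrak{p}$ by $\mathfrak{p}_c$ afforded by the cross-product connections $\nabla^c$ of Lemma~\ref{lemma:CrossProduct}, normalize or eliminate further candidates. Solving this system should leave a single essential parameter, constrained only up to scaling of the metric, so that after scaling only its sign survives: positive, zero and negative yield three non-isometric algebras. The zero case is the semidirect product $\mathfrak{so}(3) \ltimes \mathsf{A}_{3,3}$, namely case~(i); the two nonzero signs give simple $14$-dimensional algebras, which I would identify with $\mathfrak{g}_2^s$ and $\mathfrak{g}_2^c$ by exhibiting explicit isomorphisms onto the matrix models described before the statement, thereby producing the stated bases $B_x$ and $C_w$ and confirming $\mathfrak{p}$ and $\mathfrak{k}$.

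The main obstacle is twofold. First, organizing the Jacobi computation: although equivariance reduces it to scalar equations, there are many triples of graded pieces and several coupled parameters, so the bookkeeping (and the correct use of the $\odot$, $\times$, $\star$ identities and the structure constants \eqref{skew_symmetric matricies}) is delicate. Second, and more conceptually, is pinning down that the two nonnilpotent solutions are the \emph{split} and \emph{compact} real forms rather than some other real form of $\mathfrak{g}_2$; this requires comparing signatures of invariant bilinear forms and constructing the explicit change of basis into the given $\mathfrak{su}(3) \oplus \mathbb{C}^3$ and $3\times 3$ matrix models. Finally, one must verify via Proposition~\ref{canonical_connection_theorem}~(d) and Proposition~\ref{Nilpotentization_Model_Space} that each surviving algebra genuinely occurs as $\Isom$ of a model space with holonomy $\SO(3)$ and nilpotentization $\mathsf{A}_{3,3}$, which closes the classification.
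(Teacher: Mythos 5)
Your proposal follows essentially the same route as the paper's own proof: its Lemma~\ref{A_33_Classification} carries out exactly your program, decomposing $\mathfrak{g} = \mathfrak{a}_1 \oplus \mathfrak{a}_2 \oplus \mathfrak{a}_3 \oplus \mathfrak{k} \simeq \mathbb{R}^3 \oplus \bar{\mathbb{R}}^3 \oplus \bar{\mathfrak{s}} \oplus \bar{\mathbb{R}}^3$ as an $\Ort(3)$-module (via the point symmetry $\sigma$ and the canonical connection), parametrizing the bracket through Lemma~\ref{All_The_Invariant_Maps}, and using the Jacobi identity to reduce to a single parameter $\kappa$ whose scaling behavior $\lambda \cdot \mathsf{A}_{3,3}(\kappa) = \mathsf{A}_{3,3}(\kappa/\lambda^2)$ leaves only its sign. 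The theorem then follows exactly as you outline, with explicit isomorphisms onto $\mathfrak{g}_2^c$ and $\mathfrak{g}_2^s$ and a separate lemma verifying that cases (ii) and (iii) are realized as model spaces; your only inessential deviation is invoking the $\nabla^c$-connections of Lemma~\ref{lemma:CrossProduct} as a normalization device, which the paper does not need at this point.
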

We will use the rest of this section to prove this result.

\subsection{Model Spaces as Quotients of $\mathrm{G}_2$}
We will first show that the spaces in Theorem~\ref{th:A33} are indeed model spaces.
\begin{lemma}
Let $(M,H,g)$ be a sub-Riemannian manifold on the form {\rm (ii)} or {\rm (iii)} in Theorem~\ref{th:A33}. Then it is a model space whose nilpotentization is isometric to $\mathsf{A}_{3,3}$.
\end{lemma}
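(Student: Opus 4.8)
The plan is to treat the split model (ii) and the compact model (iii) in parallel: first descend the given data to a $G$-invariant sub-Riemannian structure on $M = G/K$, then compute the nilpotentization at the base point, and finally verify the integrability condition of Definition~\ref{model_space_definition} by exhibiting enough base-point-fixing isometries. To begin, let $K$ be the connected subgroup of $G$ with Lie algebra $\mathfrak{k}$. A direct bracket computation in each matrix model gives $[C_w, B_x] = B_{w \times x}$, so that $[\mathfrak{k}, \mathfrak{p}] \subseteq \mathfrak{p}$ and $\mathfrak{p}$ is $\Ad(K)$-invariant. Consequently the left-invariant distribution and metric on $G$ determined by $(\mathfrak{p}, \langle\,\cdot\,,\,\cdot\,\rangle)$ descend to a well-defined $G$-invariant sub-Riemannian structure $(H,g)$ on $M = G/K$ with $H_{eK} \cong \mathfrak{p}$, and the same identity shows that the isotropy representation of $K$ on $H_{eK}$ is the standard representation of $\SO(3)$ on $\mathbb{R}^3$.

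Next I would identify the nilpotentization, which is cleanest done representation-theoretically so as to avoid tracking constants. Since the brackets are equivariant and $\mathfrak{p} \cong \mathbb{R}^3$, the induced map on the second layer factors through $\wedge^2 \mathbb{R}^3 \cong \bar{\mathbb{R}}^3$; a short computation shows $[\mathfrak{p}, \mathfrak{p}] \not\subseteq \mathfrak{k}$, so by irreducibility the second layer is isomorphic to $\bar{\mathbb{R}}^3$ and is three-dimensional, with bracket the cross product. For the third layer one uses the decomposition $\mathbb{R}^3 \otimes \bar{\mathbb{R}}^3 \cong \bar{\mathfrak{s}} \oplus \mathbb{R}^3 \oplus \bar{\mathbb{R}}$: the $\mathbb{R}^3$-part reproduces first-layer directions and so drops out of the associated graded, the $\bar{\mathbb{R}}$-part vanishes, while the $\bar{\mathfrak{s}}$-part is governed by the symmetric product $\odot$ of \eqref{symmetrization_product} (the map \eqref{M2} of Lemma~\ref{All_The_Invariant_Maps} after the sign twist) and is nonzero, so the third layer is isomorphic to $\bar{\mathfrak{s}}$ and is five-dimensional. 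This yields growth vector $(3,6,11)$, shows $H$ is bracket-generating of step three, and matches the graded brackets with those of $\mathsf{f}[3,3]/\mathfrak{a}$ recorded in \eqref{nilpotent_bracket_free}; hence the nilpotentization is $\mathsf{A}_{3,3}$ rather than $\mathsf{C}_{3,3}$.

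It remains to verify the integrability condition. Since $G$ acts transitively on $M$ by isometries, it suffices to realize every element of $\Ort(H_{eK})$ by an isometry fixing $eK$. The orientation-preserving ones are supplied by $K$, whose image in $\Ort(H_{eK})$ is all of $\SO(3)$ by the computation of the isotropy representation above. For an orientation-reversing isometry I would produce an involutive automorphism $\sigma$ of $\mathfrak{g}$ that preserves $\mathfrak{k}$, is isometric on $\mathfrak{p}$, and restricts to $-\id$ on $\mathfrak{p}$: for $\mathfrak{g}_2^c$ one takes complex conjugation $(Z,v) \mapsto (\bar Z, \bar v)$ on $\mathfrak{su}(3) \oplus \mathbb{C}^3$, which fixes $\mathfrak{k}$ pointwise and sends $B_x = (0, ix)$ to $B_{-x}$, and for $\mathfrak{g}_2^s$ one uses the analogous conjugation induced by a reflection of $\mathbb{R}^3$. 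Integrating $\sigma$ to a Lie group automorphism normalizing $K$ and passing to the quotient produces an isometry of $M$ fixing $eK$ and acting as $-\id$ on $H_{eK}$; together with $K$ this realizes all of $\Ort(H_{eK})$. After replacing $G$ by the universal cover of its identity component if necessary, as in Proposition~\ref{canonical_connection_theorem}~(d), the space $M$ is simply connected, and we conclude that it is a model space with nilpotentization $\mathsf{A}_{3,3}$.

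The main obstacle I anticipate is the construction of the orientation-reversing isometry: the remainder is equivariant bracket bookkeeping, but one must confirm that the proposed conjugation is a genuine automorphism of the relevant $\mathrm{G}_2$-model. For the compact form this reduces to checking that complex conjugation is compatible with all three bracket relations defining $\mathfrak{g}_2^c$, which it is since conjugation is an automorphism of $\mathfrak{su}(3)$ and commutes with the $\mathbb{C}^3$-valued and $\mathfrak{su}(3)$-valued bracket formulas; for the split form the subtlety is choosing the reflection so that the conjugation preserves the displayed $7 \times 7$ matrix model while still acting as $-\id$ on $\mathfrak{p}$.
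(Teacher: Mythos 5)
Your overall strategy is the paper's: the paper proves this lemma by writing down an action of the full group $\Ort(3)$ on $\mathfrak{g}$ by Lie algebra automorphisms satisfying $qB_x = B_{qx}$ and $qC_w = (\det q)\,C_{qw}$ (the formulas \eqref{Qaction1} and \eqref{Qaction2}), verifying that the brackets are invariant, and then reading off the nilpotentization from the brackets. You repackage this as ``$\SO(3)$ from $K$ plus one orientation-reversing involution,'' which is equivalent since $\Ort(3)$ is generated by $\SO(3)$ together with $-\id$ (orientation-reversing in rank three). Your computations check out where you make them explicit: $[C_w, B_x] = B_{w \times x}$ holds in both models, your compact-form involution $(Z,v) \mapsto (\bar Z, \bar v)$ is precisely \eqref{Qaction2} evaluated at $q = -I$, and your representation-theoretic identification of the layers (second layer $\bar{\mathbb{R}}^3$, third layer $\bar{\mathfrak{s}}$, growth vector $(3,6,11)$) is a cleaner version of the paper's terse ``we can check from the Lie brackets.''

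The one genuine soft spot is exactly where you flagged it: the split form. ``Conjugation induced by a reflection of $\mathbb{R}^3$,'' read in the natural way as conjugation by a block-diagonal matrix $\mathrm{diag}(\epsilon, R, R)$ with $R \in \Ort(3)$, cannot work. Compatibility of the off-diagonal blocks in the model \eqref{G2Scoordinates} forces $\epsilon = \det R$, and then the conjugation acts in the coordinates $(x,y,S,w)'$ by $(x,y,S,w)' \mapsto \left((\det R)Rx, (\det R)Ry, RSR^t, (\det R)Rw\right)'$; since $B_x = (x,-x,0,0)'$, this sends $B_x \mapsto B_{(\det R)Rx}$, and $(\det R)R = -I$ has no solution in $\Ort(3)$, so no such conjugation restricts to $-\id$ on $\mathfrak{p}$. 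The correct involution instead swaps the two $3\times 3$ blocks: conjugation by $\left(\begin{smallmatrix}1&0&0\\0&0&I\\0&I&0\end{smallmatrix}\right)$ preserves the model, acts by $(x,y,S,w)' \mapsto (y,x,-S,w)'$, fixes every $C_w$, and sends $B_x$ to $B_{-x}$ — this is \eqref{Qaction1} evaluated at $q = -I$ (note that for $\det q = -1$ the paper's action necessarily mixes the $x$- and $y$-slots, which is why a purely block-diagonal ansatz fails). With that single substitution your argument is complete and coincides with the paper's proof.
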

\begin{proof}
For both cases, the action of $\Ort(3)$ on the Lie algebra $\mathfrak{g}$ is determined by being a Lie algebra homomorphism and satisfying
$$q B_x =  B_{qx}, \qquad qC_w =(\det q) C_{qw}.$$
For the case of $\mathfrak{g}_2^s$, we have the induced action such that if we write
\begin{equation} \label{G2Scoordinates}
\begin{pmatrix} x \\ y \\S \\w \end{pmatrix}' = \begin{pmatrix} 0 & -2 y^t & -2x^t \\ x & S + \star^{-1} w & \star^{-1} y \\ y & \star^{-1} x & - S + \star^{-1} w \end{pmatrix}\end{equation}
then
\begin{equation} \label{Qaction1} q\cdot \begin{pmatrix} x \\ y \\ S \\ w \end{pmatrix}' = \begin{pmatrix} \frac{1}{2} q(x-y) + \frac{1}{2} (\det q) q(x+y)\\  \frac{1}{2} q(y-x) + \frac{1}{2} (\det q) q(x+y) \\ (\det q) qSq^t \\ (\det q) qw \end{pmatrix}' , \qquad q \in \Ort(3).\end{equation}
One can verify that the bracket
\begin{align*}
\left[ \begin{pmatrix} x_1 \\ y_1 \\ S_1 \\ w_1 \end{pmatrix}' , \begin{pmatrix} x_2 \\ y_2 \\ S_2 \\ w_2 \end{pmatrix}' \right] = 
\begin{pmatrix} 2 y_1 \times y_2+ S_1 x_2 - S_2 x_1 + w_1 \times x_2 + x_1 \times w_2  \\ 2 x_1 \times x_2 + S_2 y_1 - S_1 y_2 + w_1 \times y_2 + y_1 \times w_2 \\ 3 x_2 \odot y_1- 3 x_1 \odot y_2+[\star^{-1} w_1, S_2] + [S_1, \star^{-1} w_2] \\  \frac{3}{2} x_1 \times y_2 + \frac{3}{2} y_1 \times x_2 +\star [S_1, S_2] + w_1 \times w_2 \end{pmatrix}'
\end{align*}
is invariant under this action.

Similarly, for $\mathfrak{g}_2^c$, for each $q \in \Ort(3)$, we get a Lie algebra homomorphism on $\mathfrak{su}(3) \times \mathbb{C}^3$ with its given brackets, by
\begin{equation} \label{Qaction2} q \cdot \begin{pmatrix} iS + i \star^{-1} w \\ y +ix \end{pmatrix} = \begin{pmatrix} (\det q) ( i qS q^t + \star^{-1} q w) \\ (\det q) qy + i q x \end{pmatrix}.\end{equation}

We can check from the Lie brackets that in both cases, we have the desired nilpotentization.
\end{proof}

\subsection{Classifying Result}
We will now show that the number of model spaces with nilpotentization $\mathsf{A}_{3,3}$ is, up to a scaling of the metric, finite.
\begin{lemma}
\label{A_33_Classification}
Let $(M,H,g)$ be a sub-Riemannian model space with nilpotentization isometric to $\mathsf{A}_{3,3}$. Then $M$ is isometric to $\mathsf{A}_{3,3}(\kappa)$ for $\kappa \in \mathbb{R}$, where $\mathsf{A}_{3,3}(\kappa)$ is a model space with the following description: The Lie algebra $\mathfrak{g}$ of $\Isom\left(\mathsf{A}_{3,3}(\kappa)\right)$ has the identification 
\begin{equation*} \mathfrak{g} \simeq \mathbb{R}^3 \oplus \bar{\mathbb{R}}^3 \oplus \bar{\mathfrak{s}} \oplus \bar{\mathbb{R}}^3, \end{equation*}
where the last $\mathbb{R}^3$-term is identified with the Lie algebra of the isotropy group at $x$. The Lie bracket between elements in $\mathfrak{g}$ is given by
\begin{align*} \left [ \begin{pmatrix} x_1 \\ y_1 \\ S_1 \\ w_1 \end{pmatrix}, \begin{pmatrix} x_2 \\ y_2 \\ S_2 \\ w_2 \end{pmatrix} \right ]
& =
\begin{pmatrix}
0 \\ x_1 \times x_2  \\ x_1 \odot y_2 - y_1 \odot x_2  \\ 0 \end{pmatrix} + \begin{pmatrix}
w_1 \times x_2 + x_1 \times w_2 \\   w_1 \times y_2 + y_1 \times w_2 \\ [\star^{-1} w_1, S_2] + [S_1,\star^{-1} w_2]  \\  w_1 \times w_2
\end{pmatrix} \\
&\qquad + \kappa \begin{pmatrix}
7(x_1 \times y_2 + y_1 \times x_2) \\  2 y_1 \times y_2 +6(S_1 x_2 - S_2 x_1)  \\ 7([ \star^{-1} y_1,S_2] + [S_1, \star^{-1} y_2]) \\ 0 \end{pmatrix}
\\
& \qquad
+ \kappa^2 \begin{pmatrix}
24(S_2 y_1 - S_1 y_2)  \\  0 \\ 0 \\ 15 y_1 \times y_2 + 18(S_2 x_1 - S_1 x_2) \end{pmatrix} - 144 \kappa^3  \begin{pmatrix}
0 \\  0 \\ 0 \\  \star [S_1,S_2]  \end{pmatrix}
.\end{align*}
The spaces $\mathsf{A}_{3,3}(\kappa)$ for $\kappa \in \mathbb{R}$ form a non-isometric family of sub-Riemannian model spaces, implying that $(M,H,g)$ is uniquely determined by the constant $\kappa$. Finally, for any positive constant $\lambda > 0$, we note that \[\lambda \cdot \mathsf{A}_{3,3}(\kappa) = \mathsf{A}_{3,3}(\kappa/\lambda^2).\]
\end{lemma}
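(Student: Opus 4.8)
The plan is to reconstruct the isometry algebra $\mathfrak{g}$ from its structure as an $\Ort(3)$-representation, impose that the bracket make it a Lie algebra, and then recover $\kappa$ as an isometry invariant. Since $(M,H,g)$ is a rank-three model space, Proposition~\ref{Regularity_of_isometries} gives $\dim G = \dim M + 3 = 14$, so $\dim\mathfrak{g} = 14$ and the isotropy algebra $\mathfrak{k}$ is three-dimensional; moreover the isotropy representation is faithful and surjects onto $\Ort(H_x)$, so $K\cong\Ort(3)$ and $\mathfrak{k}\cong\mathfrak{so}(3)\simeq\bar{\mathbb{R}}^3$. Writing the reductive decomposition $\mathfrak{g}=\mathfrak{k}\oplus\mathfrak{m}$ afforded by compactness of $K$, the quotient $\mathfrak{m}\cong T_xM$ is isomorphic as an $\Ort(3)$-representation to the nilpotentization, that is to $\mathsf{A}_{3,3}\simeq\mathbb{R}^3\oplus\bar{\mathbb{R}}^3\oplus\bar{\mathfrak{s}}$. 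This yields the identification $\mathfrak{g}\simeq\mathbb{R}^3\oplus\bar{\mathbb{R}}^3\oplus\bar{\mathfrak{s}}\oplus\bar{\mathbb{R}}^3$ of the statement, and since the \emph{untwisted} summand $\mathbb{R}^3$ occurs with multiplicity one, it must be the subspace $\mathfrak{p}$ of Proposition~\ref{canonical_connection_theorem}~(b) attached to the canonical connection.

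Next I would write down the most general Lie bracket compatible with this structure. Because $K\cong\Ort(3)$ acts by automorphisms, the bracket is $\Ort(3)$-equivariant; decomposing it into components $V_i\otimes V_j\to V_k$ and invoking Lemma~\ref{All_The_Invariant_Maps} (together with the twisting remark $\bar V_1\otimes V_2\simeq\overline{V_1\otimes V_2}$ and the maps coming from the $\mathfrak{so}(3)$-action) shows that each component is a scalar multiple of one of $\times$, $\odot$, $(x,S)\mapsto Sx$, $[\star^{-1}\,\cdot\,,\,\cdot\,]$, or $\star[\,\cdot\,,\,\cdot\,]$. Two normalizations remove most of the freedom: requiring $\mathfrak{k}=V_4$ to act by the prescribed representation fixes all brackets $[\mathfrak{k},V_i]$, and requiring the associated graded of the filtration generated by $\mathfrak{p}$ to reproduce the $\mathsf{A}_{3,3}$ brackets coming from \eqref{nilpotent_bracket_free} fixes the leading maps $[V_1,V_1]\to V_2$ and $[V_1,V_2]\to V_3$ to have coefficient one. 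What remains is a finite list of \emph{curvature} scalars, one for each equivariant component that lowers the grading degree.

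The decisive step is the Jacobi identity. I would introduce the grading derivation $\delta$ with eigenvalues $0,1,2,3$ on $V_4,V_1,V_2,V_3$; since each equivariant bracket component lowers this degree by a fixed \emph{even} amount $d$ (its \emph{defect}, as one reads off from the parity constraints behind Lemma~\ref{All_The_Invariant_Maps}), the Jacobi identity splits into independent relations organized by total defect. Solving these in order of increasing defect expresses every curvature coefficient as a universal constant times $\kappa^{d/2}$, where $\kappa\in\mathbb{R}$ is the single remaining free scale surviving at defect two; in particular it forces the would-be coefficients of $[V_1,V_1]\to V_4$ and $[V_3,V_3]\to V_2$ to vanish and produces the integers $7,2,6,7,24,15,18,-144$ of the statement. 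I expect this bookkeeping, rather than any conceptual difficulty, to be the main obstacle: Lemma~\ref{All_The_Invariant_Maps} bounds the number of unknowns a priori, but the number of Jacobi triples and the need to track twists carefully make the verification lengthy.

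Finally I would establish the scaling law and non-isometry. Replacing the inner product on $\mathfrak{p}$ by its image under $v\mapsto\lambda v$ rescales each graded piece by the corresponding power of $\lambda$ and hence sends a defect-$d$ coefficient to $\lambda^{-d}$ times itself; as $\kappa$ carries defect two, this gives $\lambda\cdot\mathsf{A}_{3,3}(\kappa)=\mathsf{A}_{3,3}(\kappa/\lambda^2)$. For non-isometry I would use the criterion recorded after Proposition~\ref{canonical_connection_theorem}: an isometry $\mathsf{A}_{3,3}(\kappa)\to\mathsf{A}_{3,3}(\kappa')$ induces an $\Ort(3)$-equivariant Lie algebra isomorphism $\Psi:\mathfrak{g}\to\mathfrak{g}'$ carrying $\mathfrak{p}$ isometrically onto $\mathfrak{p}'$ and $\mathfrak{k}$ onto $\mathfrak{k}'$. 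By Schur's lemma $\Psi$ acts by scalars on $\mathfrak{p}\cong\mathbb{R}^3$ and on $\bar{\mathfrak{s}}$ and by an invertible matrix on the isotypic part $\bar{\mathbb{R}}^3\oplus\bar{\mathbb{R}}^3$; the condition $\Psi(\mathfrak{k})=\mathfrak{k}'$ together with the isometry condition on $\mathfrak{p}$ reduces the $\mathfrak{p}$-scalar to $\pm1$, and propagating these scalars through the normalized leading brackets and one curvature bracket forces $\kappa'=\kappa$. Conversely, each $\kappa$ yields an admissible $\mathfrak{g}$ realizing a model space with nilpotentization $\mathsf{A}_{3,3}$ by the preceding lemma, so $\{\mathsf{A}_{3,3}(\kappa)\}_{\kappa\in\mathbb{R}}$ is exactly the family sought.
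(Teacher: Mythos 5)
Your route is essentially the paper's: decompose $\mathfrak{g}$ into $\Ort(3)$-isotypic pieces, use Lemma~\ref{All_The_Invariant_Maps} to write the most general equivariant bracket, pin the coefficients down with the Jacobi identity, and read off $\kappa$ as an isometry invariant; your defect grading is a clean repackaging of the paper's decomposition $\mathfrak{g}=\mathfrak{g}^+\oplus\mathfrak{g}^-$ under $\Ad(\sigma)$, since $\sigma$ is the isotropy element integrating $-\id_{H_x}$ and hence parity of defect is exactly the determinant twist, as you observe. However, there is one genuine gap: you take $V_2$ to be an arbitrary $K$-invariant reductive complement piece and then assert that the Jacobi identity forces the coefficient of the component $[V_1,V_1]\to\mathfrak{k}$ to vanish. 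It does not, because that coefficient is not an invariant of the Lie algebra but of the choice of complement inside the two copies of $\bar{\mathbb{R}}^3$ spanned by $V_2$ and $\mathfrak{k}$. Concretely, inside $\mathfrak{g}(\kappa)$ itself replace $V_2$ by the tilted complement $V_2^t=\{(0,y,0,ty): y\in\mathbb{R}^3\}$ with $t\neq 0$: this is $\Ort(3)$-invariant, still satisfies your graded normalization (as $\mathfrak{k}=\ker d_1\pi$, the projection to the nilpotentization is unchanged), yet in these coordinates $[(x_1,0,0,0),(x_2,0,0,0)]$ acquires the $\mathfrak{k}$-component $-t\, x_1\times x_2$, while for instance the defect-two coefficient $7\kappa$ shifts to $7\kappa+t$. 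So in your ansatz the Jacobi solutions form a two-parameter family $(\kappa,t)$ of mutually isomorphic algebras, your claim that every curvature coefficient equals a universal constant times $\kappa^{d/2}$ fails as stated, and the same unnormalized $t$ undermines your uniqueness step, where you silently need $\Psi(V_2)=V_2'$ before propagating Schur scalars through the brackets.

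The repair is precisely the normalization the paper makes: define $\mathfrak{a}_2:=[\mathfrak{a}_1,\mathfrak{a}_1]$ and prove it is transverse to $\mathfrak{k}$ --- and this transversality is exactly where the hypothesis that the nilpotentization is $\mathsf{A}_{3,3}$ enters, via $H_x^2\subseteq d_1\pi(\mathfrak{a}_1+\mathfrak{a}_2)$ with equality, so that $d_1\pi|_{\mathfrak{a}_2}$ is injective. With this choice the $[V_1,V_1]\to\mathfrak{k}$ component vanishes by definition, your defect bookkeeping reduces to the paper's nine constants and equations (E1)--(E8) (and Jacobi does genuinely force $c_3=0$ for $[V_3,V_3]\to V_2$, as you predicted), and in the uniqueness argument $\Psi([\mathfrak{p},\mathfrak{p}])=[\mathfrak{p}',\mathfrak{p}']$ yields $\Psi(V_2)=V_2'$ for free, after which your scalar propagation, or the paper's comparison of a skew-symmetric with a symmetric term in $(\kappa_2-\kappa_1)\psi(x)\times\psi(y)=\psi(x\odot y)_1$, forces $\kappa'=\kappa$. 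Two smaller points: solving the Jacobi relations in order of defect produces necessary conditions only, so like the paper you must still verify that the resulting bracket satisfies the full Jacobi identity; and your equivariance of $\Psi$ holds only up to composition with a fixed element of $\Ort(3)$ coming from the two frame choices, which should be absorbed before invoking Schur.
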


\begin{proof}
We will set up a correspondence between the model space structure on $(M,H,g)$ and a decomposition of the Lie algebra $\mathfrak{g}$ of its isometry group $G: = \Isom(M)$. We will use the results developed in Section~\ref{sec:Equivariant} to restrict the possible Lie algebra structures, and finally reduce the possible options further using the Jacobi identity. Lastly, we will show that our construction determines the model spaces with nilpotentization $\mathsf{A}_{3,3}$ uniquely.

Fix a point $x \in M$ and let $K := K_{x}$ be the isotropy group corresponding to $x$. The notation~$\mathfrak{k}$ indicates as usual the Lie algebras of $K$ while $\pi:G \to M$ denotes the projection sending $\varphi$ to $\pi(\varphi) := \varphi(x)$. We let $\sigma:M \to M$ be the unique isometry such that \[d\sigma |_{H_{x}} = -\id_{H_{x}}.\]
Since $\sigma^2 = \id$, we can consider the eigenvalue decomposition relative to the map $\Ad(\sigma)$, 
\[\mathfrak{g} = \mathfrak{g}^{+} \oplus \mathfrak{g}^{-}, \qquad \mathfrak{k} \subset \mathfrak{g}^{+}.\]
We will now use the canonical partial connection $\nabla$ corresponding to $H$ in Proposition \ref{canonical_connection_theorem} (a) to decompose of $\mathfrak{g}^{+}$ and $\mathfrak{g}^{-}$ further. Let $\mathfrak{a}_1 \subset \mathfrak{g}^{-}$ denote the subspace corresponding to the canonical partial connection $\nabla$ as in Proposition \ref{canonical_connection_theorem} (b). As $\mathfrak{a}_1$ is invariant under the action of the compact group $K$, we have that there exists an invariant complement $\mathfrak{a}_{3}\subset \mathfrak{g}^{-}$ of $\mathfrak{a}_1$. We define \[\mathfrak{a}_{2} = [\mathfrak{a}_1,\mathfrak{a}_{1}] \subset \mathfrak{g}^{+}.\]
Notice that $H_{x}^{2} \subset d_1\pi(\mathfrak{a}_1 + \mathfrak{a}_2)$.
However, the reverse inclusion also holds because the nilpotentization of $M$ is isometric to $\mathsf{A}_{3,3}$. This implies that $\mathfrak{a}_2$ is transverse to $\mathfrak{k} = \textrm{ker}(d_1\pi)$ inside $\mathfrak{g}^{+}$. Summarizing, we have the decomposition 
\begin{equation}
\label{Lie_algebra_decomposition}
    \mathfrak{g} = \mathfrak{a}_1 \oplus \mathfrak{a}_2 \oplus \mathfrak{a}_3 \oplus \mathfrak{k}.
\end{equation}
\par Let us fix an orthonormal basis for $H_{x}$ by choosing a linear isometry $\phi:\mathbb{R}^3 \to H_{x}$. Then the map \[\phi^{-1} \circ d_1\pi |_{\mathfrak{a}_1}:\mathfrak{a}_1 \longrightarrow \mathbb{R}^3\] identifies $\mathfrak{a}_1$ and $\mathfrak{a}_2$ with $\mathbb{R}^3$ as vector spaces. Moreover, this induces an identification between $\mathfrak{a}_3$ and $\mathfrak{s}$ as vector spaces. Recall that any isometry $\varphi \in K$ is uniquely determined by its differential at a single point by Proposition \ref{Regularity_of_isometries}. We hence get an identification between $K$ and $\Ort(3)$ as Lie groups since $(M,H,g)$ is a model space. This identifies the Lie algebra $\mathfrak{k}$ of $K$ with the Lie algebra $\mathfrak{so}(3)$ of $O(3)$. An element in $\mathfrak{g}$ will be denoted by $(x,y,S,w)$ according to the decomposition (\ref{Lie_algebra_decomposition}) with $x,y,w \in \mathbb{R}^3$ and $S \in \mathfrak{s}$. We will use the following properties.
\begin{enumerate}[\rm (I)]
\item The Lie brackets are invariant under $\Ort(3)$, that is, for any $a \in \Ort(3)$ we have 
 \begin{align*}a \cdot [(x_1,y_1,S_1,w_1),(x_2,y_2,S_2,w_2)] = [a \cdot (x_1,y_1,S_1,w_1), a \cdot (x_2,y_2,S_2,w_2)]. \end{align*}
 \item Since the Lie bracket of $ \mathfrak{k}$ when identified with $\mathbb{R}^3$ is the cross product, we have \begin{align*}[(0,0,0,w_1), (0,0,0,w_2)] = (0,0,0,w_1 \times w_2). \end{align*}
\item By construction, we have \begin{align*}[\mathfrak{a}_i,\mathfrak{k}] \subset \mathfrak{a}_i, \quad i=1,2,3. \end{align*}
 \item Since the nilpotentization $\textrm{Nil}(M)$ is isometric to $\mathsf{A}_{3,3}$, it follows that
 \begin{align*}[(x_1,0,0,0), (x_2,0,0,0)] & = (0,x_1 \times x_2,0,0), \\  \pr_{\mathfrak{a}_3}[(x,0,0,0), (0,y,0,0)] &= (0,0,x \odot y,0),\end{align*} where $\odot$ is defined in (\ref{symmetrization_product}).
\end{enumerate}
Using the above properties and Lemma \ref{All_The_Invariant_Maps}, we deduce that the Lie bracket between the elements $(x_1,y_1,S_1,w_1)$ and $(x_2,y_2,S_2,w_2)$ has the form
\begin{align*}
    \small \begin{pmatrix}
c_5(x_1 \times y_2 + y_1 \times x_2) + c_6(S_2 y_1 - S_1 y_2) + x_1 \times w_2 + w_1 \times x_2 \\ x_1 \times x_2 + c_1y_1 \times y_2 + c_3[S_1,S_2] + c_8(S_2 x_1 - S_1 x_2) + y_1 \times w_2 + w_1 \times y_2 \\ x_1 \odot y_2 - y_1 \odot x_2 + c_7\left([y_1,S_2] - [y_2,S_1]\right) + [S_1,w_2] - [S_2,w_1] \\ c_2y_1 \times y_2 + c_4[S_1,S_2] + c_{9}(S_2 x_1 - S_1 x_2) + w_1 \times w_2
\end{pmatrix}.
\end{align*}

We will introduce further restriction of the constants $c_1,\dots,c_9$ by imposing that the Jacobi identity holds for carefully selected basis elements. We denote the standard basis in $\mathbb{R}^3$ by $e_1,e_2,e_3$ and we will employ the symmetric matrices $S_{12},S_{13},S_{23}$ and the diagonal matrices $D_1,D_2,D_3$ defined in (\ref{skew_symmetric matricies}).
To obtain some simple relations we let \[v_1 = \begin{pmatrix} e_1 \\ 0 \\ 0 \\ 0 \end{pmatrix}, \quad v_2 = \begin{pmatrix} e_2 \\ 0 \\ 0 \\ 0 \end{pmatrix}, \quad v_3 = \begin{pmatrix} 0 \\ e_1 \\ 0 \\ 0 \end{pmatrix}.\]
Then the Jacobi identity becomes
\[\left [\begin{pmatrix} e_1 \\ 0 \\ 0 \\ 0 \end{pmatrix}, \begin{pmatrix} -c_5 e_3 \\ 0 \\ \frac{1}{2} S_{12} \\ 0 \end{pmatrix} \right ] - \left [\begin{pmatrix} e_2 \\ 0 \\ 0 \\ 0 \end{pmatrix}, \begin{pmatrix} 0 \\ 0 \\ \frac{1}{3}(2 D_1 - D_2 - D_3) \\ 0 \end{pmatrix} \right ] + \left [\begin{pmatrix} 0 \\ e_1 \\ 0 \\ 0 \end{pmatrix}, \begin{pmatrix} 0 \\ e_3 \\ 0 \\ 0 \end{pmatrix} \right ] = \begin{pmatrix} 0 \\ 0 \\ 0 \\ 0 \end{pmatrix}.\] By computing the final brackets we acquire the equations
\begin{align*}
   \frac{5}{6}c_9 & = c_2, \tag{E1}\label{E1} \\
   c_5 + \frac{5}{6}c_8 & = c_1. \tag{E2}\label{E2}
\end{align*}
As the next four computations are of a similar nature, we will provide fewer details. Let \[v_1 = \begin{pmatrix} 0 \\ e_1 \\ 0 \\ 0 \end{pmatrix}, \quad v_2 = \begin{pmatrix} 0 \\ e_2 \\ 0 \\ 0 \end{pmatrix}, \quad v_3 = \begin{pmatrix} e_1 \\ 0 \\ 0 \\ 0 \end{pmatrix}.\] From the Jacobi identity we extract the equation $3c_7 = c_5 + c_1.$ Using Equation (\ref{E2}) we can rewrite this as 
\begin{align*}
    3c_7 = 2c_1 -\frac{5}{6}e_8. \tag{E3}\label{E3} 
\end{align*}
Next we start to involve $\mathfrak{s}$ and consider \[v_1 = \begin{pmatrix} e_1 \\ 0 \\ 0 \\ 0 \end{pmatrix}, \quad v_2 = \begin{pmatrix} 0 \\ e_1 \\ 0 \\ 0 \end{pmatrix}, \quad v_3 = \begin{pmatrix} 0 \\ 0 \\ S_{12} \\ 0 \end{pmatrix}.\] This time the equations we get are 
\begin{align*}
   c_3 & = c_1 c_8 + c_9 - c_6 -c_7 c_8, \tag{E4}\label{E4} \\
   c_4 & = c_2 c_8 - c_7 c_9. \tag{E5}\label{E5}
\end{align*}
By letting \[v_1 = \begin{pmatrix} e_1 \\ 0 \\ 0 \\ 0 \end{pmatrix}, \quad v_2 = \begin{pmatrix} e_2 \\ 0 \\ 0 \\ 0 \end{pmatrix}, \quad v_3 = \begin{pmatrix} 0 \\ 0 \\ S_{13} \\ 0 \end{pmatrix},\] we obtain the equations 
\begin{align*}
   c_6 & = -c_5c_8 -c_9, \tag{E6}\label{E6} \\
   c_7 & = -\frac{1}{2}c_8. \tag{E7}\label{E7}
\end{align*}
Notice that we can use equations (\ref{E1}) - (\ref{E7}) to write all the coefficients in terms of $c_1$ and $c_2$. Finally, to obtain a dependence between $c_1$ and $c_2$ we consider \[v_1 = \begin{pmatrix} 0 \\ e_1 \\ 0 \\ 0 \end{pmatrix}, \quad v_2 = \begin{pmatrix} 0 \\ e_2 \\ 0 \\ 0 \end{pmatrix}, \quad v_3 = \begin{pmatrix} 0 \\ 0 \\ S_{13} \\ 0 \end{pmatrix}.\] The Jacobi identity gives the equation
\begin{align*}
    c_2 = c_7^2 - c_1c_7 + \frac{1}{2}c_6. \tag{E8}\label{E8}
\end{align*}
\par It is now straightforward to use equations (\ref{E1}) - (\ref{E8}) to write the coefficients $c_1,\dots,c_9$ in terms of a single coefficient. We rename $7\kappa := c_5$ and get after some straightforward manipulations the equations 
\begin{align*}
       c_1 & = 2\kappa, \quad
       c_2 = 15\kappa^2, \quad
       c_3 = 0, \quad 
       c_4 = -144\kappa^3, \\
       c_5 = 7\kappa &, \quad
       c_6 = 24\kappa^2, \quad
       c_7 = 3 \kappa, \quad
       c_8 = -6 \kappa, \quad
       c_9 = 18\kappa^2.
\end{align*}
Thus we can conclude that the Lie bracket between the elements $(x_1,y_1,S_1,w_1)$ and $(x_2,y_2,S_2,w_2)$ actually has the form
\[\small \begin{pmatrix}
7\kappa(x_1 \times y_2 + y_1 \times x_2) + 24\kappa^2(S_2 y_1 - S_1 y_2) + x_1 \times w_2 + w_1 \times x_2 \\ x_1 \times x_2 + 2\kappa y_1 \times y_2 -6\kappa(S_2 x_1 - S_1 x_2) + y_1 \times w_2 + w_1 \times y_2 \\ x_1 \odot y_2 - y_1 \odot x_2 + 3\kappa([\star^{-1} y_1,A_2] - [\star^{-1} y_2,A_1]) + [S_1,w_2] - [S_2,w_1] \\ 15\kappa^2y_1 \times y_2 -144\kappa^3 \star [S_1,S_2] + 18\kappa^2(S_2 x_1 - S_1 x_2) + w_1 \times w_2
\end{pmatrix}.\] 
To see that this in fact satisfies the Jacobi identity one simply has to observe that everything cancels when expanding the identity. 

We will now show that the constant $\kappa$ uniquely determines the model spaces with nilpotentization $\mathsf{A}_{3,3}$. For $i=1,2$, let us use the temporary notation $\mathfrak{g}(\kappa_i)$ for the Lie algebra of $G^{(i)} := \textrm{Isom}(M^{(i)})$, where $(M^{(i)},H^{(i)},g^{(i)})$ are model spaces with nilpotentization $\mathsf{A}_{3,3}$ and $i = 1,2$. Assume there exists an isometry $\Phi:M^{(1)} \to M^{(2)}$. The discussion in Section \ref{sec: Partial_Connections_and_Horizontal_Holonomy} implies that we get an induced Lie algebra morphism
\[\psi := d\overline{\Phi}:\mathfrak{g}(\kappa_1) = \mathfrak{a}_1 \oplus \mathfrak{a}_2 \oplus \mathfrak{a}_3 \oplus \mathfrak{k} \longrightarrow \mathfrak{g}(\kappa_2) = \widetilde{\mathfrak{a}_1} \oplus \widetilde{\mathfrak{a}_2} \oplus \widetilde{\mathfrak{a}_3} \oplus \widetilde{\mathfrak{k}}\] that maps $\mathfrak{a}_1$ isometrically onto $\widetilde{\mathfrak{a}_1}$ and $\psi(\mathfrak{k}) = \widetilde{\mathfrak{k}}$. As $[\mathfrak{a}_1,\mathfrak{a}_1] = \mathfrak{a}_2$ and similarly for $\widetilde{\mathfrak{a}_2}$, we get that after identifying $\mathfrak{a}_1$ with $\widetilde{\mathfrak{a}_1}$ through $\psi$ that $\mathfrak{a}_2$ gets identified with $\widetilde{\mathfrak{a}_2}$ as well. However, we can not conclude that $\mathfrak{a}_3$ is mapped onto $\widetilde{\mathfrak{a}_3}$. Nevertheless, we know that $\psi$ will map $[\mathfrak{a}_1,\mathfrak{a}_2]$ into $[\widetilde{\mathfrak{a}_1},\widetilde{\mathfrak{a}_2}]$. As $\mathfrak{a}_3 \subset [\mathfrak{a}_1,\mathfrak{a}_2]$ and similarly for $\widetilde{\mathfrak{a}_3}$, we can at least ensure that
\[\psi |_{\mathfrak{a}_3}:\mathfrak{a}_3 \longrightarrow [\widetilde{\mathfrak{a}_1},\widetilde{\mathfrak{a}_2}] \subset \widetilde{\mathfrak{a}_1} \oplus \widetilde{\mathfrak{a}_3}.\]

If $S \in \mathfrak{a}_3$ we will use the notation $\psi(S)_{j}$ according to the decomposition of $\psi(\mathfrak{a}_3) \subset \widetilde{\mathfrak{a}_1} \oplus \widetilde{\mathfrak{a}_3}$, for $j = 1,3.$ For $x \in \mathfrak{a}_1$ and $y \in \mathfrak{a}_2$, we have 
\begin{align*}
\psi\left(\left[ \begin{pmatrix} x \\ 0 \\ 0 \\ 0 \end{pmatrix} , \begin{pmatrix} 0 \\ y \\ 0 \\ 0 \end{pmatrix} \right]\right) = \psi\begin{pmatrix}7\kappa_1 x \times y \\ 0 \\ x \odot y \\ 0 \end{pmatrix} = \begin{pmatrix}7\kappa_1 \psi(x) \times \psi(y) + \psi\left(x \odot y \right)_{1} \\ 0 \\ \psi\left(x \odot y \right)_{3} \\ 0 \end{pmatrix}.
\end{align*}
On the other hand, we also have that
\[[\psi(x),\psi(y)] = \begin{pmatrix} 7\kappa_2 \psi(x) \times \psi(y) \\ 0 \\ \psi(x) \odot \psi(y) \\ 0\end{pmatrix},\] where we used that $\psi$ sends $\mathfrak{a}_2$ onto $\widetilde{\mathfrak{a}_2}$. Rewriting the first row gives 
\begin{equation}
\label{minor_equation}
(\kappa_{2}-\kappa_{1})\psi(x) \times \psi(y) = \psi\left(x \odot y\right)_{1}.
\end{equation}
However, notice that the left hand side of Equation (\ref{minor_equation}) is skew-symmetric while the right hand side is symmetric. Thus both sides are identically zero and it follows that $\kappa_1 \neq \kappa_2$ from picking $x \in \mathfrak{a}_1$ and $y \in \mathfrak{a}_2$ such that $\psi(x) \times \psi(y) \neq 0$. Hence different values of $\kappa \in \mathbb{R}$ parametrize a non-isometric family $\mathsf{A}_{3,3}(\kappa)$ of sub-Riemannian model spaces with nilpotentization $\mathsf{A}_{3,3}$. Through the construction presented, it is clear that every model space with nilpotentization $\mathsf{A}_{3,3}$ have the form presented in Lemma \ref{A_33_Classification}.
\end{proof}

\subsection{Proof of Theorem~\ref{th:A33}}
In Lemma~\ref{A_33_Classification} we showed that all model spaces $(M, H, g)$ with nilpotentization isometric to $\mathsf{A}_{3,3}$ are, up to a scaling of the metric, isometric to $\mathsf{A}_{3,3}$, $\mathsf{A}_{3,3}(-1)$ or $\mathsf{A}_{3,3}(1)$. Since our list in Theorem~\ref{th:A33} contains three examples that can not be scaled to be equal, they have to be in a one-to-one correspondence. 

For an explicit description, let $\mathfrak{g}(\kappa)$ denote the Lie algebra of $\textrm{Isom}\left(\mathsf{A}_{3,3}(\kappa)\right)$. For $\kappa > 0$ with $\sqrt{\kappa} =a$, we have an isomorphism $\varphi: \mathfrak{g}(\kappa) \to \mathfrak{g}_2^c$ given by
$$\varphi \begin{pmatrix} x \\ y \\ S \\ w \end{pmatrix} = \begin{pmatrix} 12 a^3 i S+ \star^{-1} \left(w + 3a^2 y\right) \\ -2 a^2 y + i ax 
\end{pmatrix} .$$
Similarly, for $\kappa < 0$ with $\kappa = - a^2$, we have an isomorphism $\varphi: \mathfrak{g}(\kappa) \to \mathfrak{g}_2^s$. In the coordinates \eqref{G2Scoordinates}, this isomorphism is given by
\begin{align*}
\varphi \begin{pmatrix} x \\ y \\ S \\ w \end{pmatrix} =
\begin{pmatrix}
ax + 2a^2 y\\ -ax +2a^2 \\ -12 a^3 S \\ w - 3a^2 y
\end{pmatrix} '. 
\end{align*}\qed

\section{Model spaces with nilpotentization $\mathsf{F}[3,3]$}
\label{sec:Section (3,6,14)-classification}
We will now provide the final piece of the classification which turns out to be the most surprising one as well. This result should be compared with the known results on step two model spaces briefly discussed in the beginning of Section \ref{sec: Model_spaces_classification}.

\begin{theorem}
\label{nilpotent_classification}
Let $(M,H,g)$ be a sub-Riemannian model space whose nilpotentization is isometric to the free nilpotent Lie group $\mathsf{F}[3,3]$. Then $M$ is isometric to $\mathsf{F}[3,3]$. 
\end{theorem}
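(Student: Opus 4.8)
The plan is to mirror the proof of Lemma~\ref{A_33_Classification}: reconstruct the isometry algebra $\mathfrak{g} = \mathrm{Lie}(\Isom(M))$ from the model-space data and then show that, in contrast with the $\mathsf{A}_{3,3}$ case, the free nilpotentization leaves no room for a nontrivial deformation. First I would fix a point $x \in M$, let $K = K_x \cong \Ort(3)$ be its isotropy group with Lie algebra $\mathfrak{k} \cong \mathfrak{so}(3)$, and introduce the unique isometry $\sigma$ with $d\sigma|_{H_x} = -\id$. Decomposing $\mathfrak{g} = \mathfrak{g}^+ \oplus \mathfrak{g}^-$ into the eigenspaces of $\Ad(\sigma)$, with $\mathfrak{k} \subset \mathfrak{g}^+$, I take $\mathfrak{a}_1 \subset \mathfrak{g}^-$ to be the subspace of Proposition~\ref{canonical_connection_theorem}~(b) attached to the canonical partial connection, set $\mathfrak{a}_2 = [\mathfrak{a}_1, \mathfrak{a}_1]$, and choose a $K$-invariant complement so that $\mathfrak{g} = \mathfrak{a}_1 \oplus \mathfrak{a}_2 \oplus \mathfrak{a}_3 \oplus \mathfrak{k}$. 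Since $\Nil(M) \cong \mathsf{F}[3,3]$, matching the associated graded of this filtration with $\mathsf{f}[3,3]$ forces the identifications of $\Ort(3)$-representations $\mathfrak{a}_1 \simeq \mathbb{R}^3$, $\mathfrak{a}_2 \simeq \bar{\mathbb{R}}^3$ and $\mathfrak{a}_3 \simeq \bar{\mathfrak{s}} \oplus \mathbb{R}^3$, the full $8$-dimensional third layer; a general element will be written $(x,y,S,z,w)$.

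Next I would write down the most general admissible bracket. Exactly as for $\mathsf{A}_{3,3}$, it must be $\Ort(3)$-equivariant, restrict to the cross product on $\mathfrak{k}$, satisfy $[\mathfrak{a}_i,\mathfrak{k}] \subseteq \mathfrak{a}_i$, and have leading (nilpotentization) part equal to the free bracket \eqref{nilpotent_bracket_free}. Feeding each pair of summands into Lemma~\ref{All_The_Invariant_Maps}, together with the bar-identities $V_1 \otimes V_2 \simeq \bar V_1 \otimes \bar V_2$ and $\bar V_1 \otimes V_2 \simeq \overline{V_1 \otimes V_2}$, reduces the possible correction terms to a finite list of real coefficients. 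The new feature relative to $\mathsf{A}_{3,3}$ is that there are now two copies of $\mathbb{R}^3$ among the odd summands, namely the generating layer $\mathfrak{a}_1$ and the $\mathfrak{so}(3)$-part $z$ of the third layer, so several brackets may a priori distribute a component between the $x$- and $z$-slots.

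I would then impose the Jacobi identity on a short list of carefully chosen basis triples, built from $e_1,e_2,e_3$, the symmetric matrices $S_{ij}$ and the diagonal matrices $D_i$ of \eqref{skew_symmetric matricies}, exactly as in Lemma~\ref{A_33_Classification}. This yields a system of polynomial relations among the unknown coefficients. The aim is to show that this system has only the trivial solution, so that $\mathfrak{g}$ is forced to be the semidirect product $\mathfrak{so}(3) \ltimes \mathsf{f}[3,3]$, which is manifestly a Lie algebra (hence no final Jacobi verification is needed) and which is precisely $\mathrm{Lie}\big(\Isom(\mathsf{F}[3,3])\big)$. Since $\mathfrak{a}_1 = \mathfrak{p}$ carries the correct inner product, the reconstruction $M = G/K$ then gives $M \cong \mathsf{F}[3,3]$, just as in the closing paragraph of Lemma~\ref{A_33_Classification}.

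The main obstacle is precisely this rigidity step. Whereas for $\mathsf{A}_{3,3}$ a genuine one-parameter family $\mathsf{A}_{3,3}(\kappa)$ survives the Jacobi constraints, here the extra $z$-component of the third layer contributes additional relations that couple the prospective deformation parameters across the $x$- and $z$-slots and force them all to vanish. Organizing the Jacobi computation so that this over-determination is transparent, and checking that no consistent nonzero deformation slips through, is the heart of the argument and explains why the free nilpotentization admits a unique model space.
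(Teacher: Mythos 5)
Your proposal is correct in outline and follows essentially the same route as the paper's proof: the symmetry $\sigma$ and the eigenspace splitting $\mathfrak{g} = \mathfrak{g}^+ \oplus \mathfrak{g}^-$, the subspace $\mathfrak{a}_1$ from Proposition~\ref{canonical_connection_theorem}~(b), $\mathfrak{a}_2 = [\mathfrak{a}_1,\mathfrak{a}_1]$, the identification of the summands as $\Ort(3)$-representations, the general equivariant bracket via Lemma~\ref{All_The_Invariant_Maps}, and elimination of coefficients by the Jacobi identity. One structural deviation is worth flagging: where you \enquote{choose a $K$-invariant complement} of $\mathfrak{a}_1$ exactly as in the $\mathsf{A}_{3,3}$ case, the paper instead sets $\widetilde{\mathfrak{a}_3} := [\mathfrak{a}_1,\mathfrak{a}_2]$ and proves directness of the resulting sum by a dimension count that uses freeness of the nilpotentization, a point the paper explicitly emphasizes as differing from the $\mathsf{A}_{3,3}$ strategy. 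This matters because here, unlike for $\mathsf{A}_{3,3}$, the invariant complement is \emph{not} unique: the representation $\mathbb{R}^3$ occurs twice among the odd summands (in $\mathfrak{a}_1$ and in the $z$-slot of the third layer), so a generic invariant complement mixes with $\mathfrak{a}_1$, and in your parametrization an extra equivariant term proportional to $x_1 \times y_2$ in the $\mathfrak{a}_1$-slot of $[\mathfrak{a}_1,\mathfrak{a}_2]$ would have to be allowed. Your version is still workable, but it carries additional coefficients (or requires normalizing the complement), whereas the paper's canonical choice removes them from the start.

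The more substantive shortfall is that the heart of the theorem --- that the Jacobi system admits only the trivial solution --- appears in your proposal as a stated aim rather than an executed step. The paper derives eighteen relations among the eighteen unknown coefficients, and the decisive chain is concrete: combining (Eq14), (Eq15), (Eq17), (Eq18) gives $f_6 = 0$, whence $a_2 = 0$ by (Eq13) and $f_4 = 0$ by (Eq6); similarly $f_5 = 0$; and separately (Eq5), (Eq7), (Eq10), (Eq12) together with (Eq1) and (Eq3) yield $f_4 = 8c_1^2 c_2 = \tfrac{72}{49}c_2^3$, forcing $c_2 = 0$, after which every remaining coefficient collapses to zero. Merely observing that the system is \enquote{over-determined} does not establish this --- after all, in the $\mathsf{A}_{3,3}$ case an equally over-determined-looking system leaves a one-parameter family --- so to complete your argument you would need to carry out this finite computation, or an equivalent one.
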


\begin{proof}
It will be apparent that the proof has a similar structure as the proof of Lemma \ref{A_33_Classification}, hence we will provide fewer details.
Let $(M,H,g)$ denote a model space such that its nilpotentization $\textrm{Nil}(M)$ is isometric to $\mathsf{F}[3,3]$. Similarly as before, we get an eigenvalue decomposition $\mathfrak{g} = \mathfrak{g}^{+} \oplus \mathfrak{g}^{-}$ of the Lie algebra of the isometry group. The canonical partial connection $\nabla$ is again used to obtain a subspace $\mathfrak{a}_1 \subset \mathfrak{g}$ that is invariant under the action of the isotropy group $K$. We define $\mathfrak{a}_2 = [\mathfrak{a}_1,\mathfrak{a}_1]$ and note that $\mathfrak{a}_2$ is transverse to $\mathfrak{a}_1$ since $\mathfrak{a}_1 \subset \mathfrak{g}^{-}$ while $\mathfrak{a}_2 \subset \mathfrak{g}^{+}$. Moreover, the argument presented in the proof of Lemma \ref{A_33_Classification} carries over to show that $\mathfrak{a}_2$ is transverse to $\mathfrak{k}$. Define $\widetilde{\mathfrak{a}_3} = [\mathfrak{a}_1,\mathfrak{a}_2] \subset \mathfrak{g}^{-}$. Then $\widetilde{\mathfrak{a}_3}$ is eight-dimensional since 
\begin{equation}
\label{equation_1}
    d_1\pi(\mathfrak{a}_1 + \mathfrak{a}_2 + \widetilde{\mathfrak{a}_3}) = T_{p}M.
\end{equation} 
The subspace $\widetilde{\mathfrak{a}_3}$ is clearly transverse to both $\mathfrak{a}_2$ and $\mathfrak{k}$ due to the eigenvalue decomposition $\mathfrak{g} = \mathfrak{g}^{+} \oplus \mathfrak{g}^{-}$. Moreover, a nonempty intersection of $\mathfrak{a}_1$ and $\widetilde{\mathfrak{a}_3}$ would contradict (\ref{equation_1}). We emphasize that this argument is only valid because $\Nil(M)$ is isometric to $\mathsf{F}[3,3]$ and should be compared with the different strategy used in the proof of Lemma \ref{A_33_Classification}. \par We get an identification between $K$ and $\Ort(3)$ as Lie groups by fixing an orthonormal basis for $H_{x}$. This induces identifications 
\[\mathfrak{a}_1 \simeq \mathfrak{f}_1 \simeq \mathbb{R}^3, \quad \mathfrak{a}_2 \simeq \mathfrak{f}_2 \simeq \bar{\mathbb{R}}^3, \quad \widetilde{\mathfrak{a}_3} \simeq \mathfrak{f}_{3}\]
as representations, where $\mathfrak{f}_i$ denotes the $i$'th layer of the free nilpotent Lie algebra $\mathsf{f}[3,3]$. Finally, we use the concrete description of the action on $\mathfrak{f}_3$ given prior to Theorem \ref{Classification_Carnot_Spaces} to decompose $\widetilde{\mathfrak{a}_3} = \mathfrak{a}_3 \oplus \mathfrak{a}_4$ as representations, where $\mathfrak{a}_3 \simeq \bar{\mathfrak{s}}$ and $\mathfrak{a}_4 \simeq \mathfrak{so}(3) \simeq \bar{\mathbb{R}}^3$. We use $\star: \mathfrak{so}(3) \to \mathbb{R}^3$ for the latter identification. Summarizing these properties gives us the identification
\begin{equation}
\label{g_decomposition_N33}
\mathfrak{g} = \mathfrak{a}_1 \oplus \mathfrak{a}_2 \oplus \mathfrak{a}_3 \oplus \mathfrak{a}_4 \oplus \mathfrak{k} \simeq \mathbb{R}^3 \oplus \bar{\mathbb{R}}^3 \oplus \bar{\mathfrak{s}} \oplus \mathbb{R}^3 \oplus \bar{\mathbb{R}}^3.
\end{equation}
\par We will denote an arbitrary element in $\mathfrak{g}$ by $(x,y,S,z,w)$ according to the decomposition (\ref{g_decomposition_N33}). The Lie bracket of $\mathfrak{g}$ satisfies the properties (I) - (III) presented in the proof of Lemma \ref{A_33_Classification}, along with the following modification of (IV):
\begin{enumerate}
\item[(IV')] Since the nilpotentization of $M$ is isometric to $\mathsf{F}[3,3]$, it follows that
 \begin{align*}
 [(x_1,0,0,0,0), (x_2,0,0,0,0)] & = (0,x_1 \times x_2,0,0,0), \\ 
 [(x,0,0,0,0), (0,y,0,0,0)] & = (0,0,x \odot y, x \times y,0).
 \end{align*}
\end{enumerate}
Using these properties and Lemma~\ref{All_The_Invariant_Maps}, we have that the Lie bracket between two elements $(x_1,y_1,S_1,z_1,w_1)$ and $(x_2,y_2,S_2,z_2,w_2)$ has the form
\begin{align*}
& \begin{pmatrix}
 0 \\ x_1 \times x_2 + b_1y_1 \times y_2 + b_2 \star [S_1,S_2] + b_3z_1 \times z_2 \\  0 \\ 0 \\ f_1y_1 \times y_2 + f_2[S_1,S_2] + f_3 z_1 \times z_2 + w_1 \times w_2
\end{pmatrix}  \\ & +
\begin{pmatrix}
 a_1S_{2}y_1 + a_2y_1 \times z_2 + x_1 \times w_2 \\ b_4 S_{1}z_2 + b_5S_{2}x_1 + b_6x_1 \times z_2 + y_1 \times w_2 \\  x_1 \odot y_2 + c_1[\star^{-1} y_1,S_2] + c_2 y_1 \odot z_2 + [S_1, \star^{-1} w_2] \\ x_1 \times y_2 + d_1 S_{2}y_1 + d_2y_1 \times z_2 + z_1 \times w_2 \\ f_4S_{1}z_2 + f_5S_{2}x_1 + f_6x_1 \times z_2
\end{pmatrix}  \\
& - \begin{pmatrix} a_1 S_1 y_2 + a_2y_2 \times z_1 + x_2 \times w_1\\ b_4 S_2z_1 + b_5 S_1x_2 + b_6x_2 \times z_1 + y_2 \times w_1\\ y_1 \odot x_2 + c_1[\star^{-1} y_2,S_1] + c_2 z_1 \odot y_2 + [S_2, \star^{-1} w_1]\\ x_2 \times y_1 + d_1S_1 y_2 + d_2y_2 \times z_1 + z_2 \times w_1\\ f_4S_2z_1 + f_5S_1x_2 + f_6x_2 \times z_1 \end{pmatrix},
\end{align*}
for some constants $a_1$, $a_2$, $b_1$, $b_2$, $b_3$, $b_4$, $b_5$, $b_6$, $c_1$, $c_2$, $d_1$, $d_2$, $f_1$, $f_2$, $f_3$, $f_4$, $f_5$, $f_6$. \par
We will now derive no less than eighteen equations by using the Jacobi identity. From the obtained equations we will show that all the constants present in the Lie bracket above are in fact zero. As their derivations are straightforward and already illustrated in the proof of Lemma \ref{A_33_Classification}, we will only give the result and not the explicit calculations. We obtain the equations
\begin{align}
   \textrm{(Eq1)} \quad b_1 & = b_6 + (5/6)b_5, \quad & \textrm{(Eq2)} \quad f_1 & = f_6 + (5/6)f_5, \notag \\ \textrm{(Eq3)} \quad b_1 & = 3c_1 + c_2, \quad &
   \textrm{(Eq4)} \quad f_2 & = -d_1b_4 - d_2b_2, \notag \\ \textrm{(Eq5)} \quad f_4 & = (1/2)c_2b_2 - c_1 b_4, \quad & \textrm{(Eq6)} \quad f_4 & = -b_5a_2, \notag \\
   \textrm{(Eq7)} \quad b_4 & = c_2b_5, \quad & \textrm{(Eq8)} \quad f_5 & = -d_2b_5 - b_4, \notag \\ \textrm{(Eq9)} \quad f_5 & = -a_1, \quad &
   \textrm{(Eq10)} \quad b_5 & = -2c_1, \notag \\ \textrm{(Eq11)} \quad b_5 & = -d_1, \quad & \textrm{(Eq12)} \quad b_2 & = -6(f_5 + c_1b_5), \notag\\
   \textrm{(Eq13)} \quad f_6 & = a_2, \quad & \textrm{(Eq14)} \quad b_6 & = -c_2, \notag \\ \textrm{(Eq15)} \quad b_6 & = d_2, \quad & 
   \textrm{(Eq16)} \quad f_3 & = a_2b_6, \notag \\ \textrm{(Eq17)} \quad b_3 & = -c_2b_6, \quad & \textrm{(Eq18)} \quad f_6 & = b_3 - d_2b_6. \notag
\end{align}

A careful look at the equations above reveals that all the constants have to be zero once we have showed that $c_2$ has to be zero. Writing out (Eq18) by applying (Eq17), (Eq15), and (Eq14) gives \[f_6 = b_3 - d_2b_6 = -c_2 b_6 - b_6^2 = b_6^2 - b_6^2 = 0.\] Together with (Eq13) this shows that $a_2 = 0$. Looking at (Eq6) shows now that $f_4 = 0$. The goal now is to show that \[ f_4 = \frac{72}{49}c_2^3,\] as this will force $c_2$ to be zero. Firstly, we have from equation (Eq8) that \[f_5 = -d_2 b_5 - b_4 = -b_6b_5 - c_2b_5 = -b_6 b_5 + b_6 b_5 = 0,\]
by using (Eq15), (Eq7), and (Eq14). Expressing $f_4$ with the help of (Eq5) and using (Eq12), (Eq7), and (Eq10) shows that 
\begin{align*}
f_4 = \frac{1}{2}c_2b_2 - c_1 b_4 & = \frac{1}{2}c_2 (-6c_1b_5) - c_1 (c_2b_5) \\ & = -4c_1c_2b_5 \\ & = 8c_1^2 c_2 = \frac{72}{49}c_2^3. 
\end{align*}
Thus all the constants are zero. Hence $\mathfrak{g}$ is isomorphic to the Lie algebra of $\textrm{Isom}(\mathsf{F}[3,3])$. 
\end{proof}

\begin{remark} \label{remark:G2SR}
Although there are no other model spaces with nilpotentization $\mathsf{F}[3,3]$, there are other spaces whose isometry group have maximal dimension. In fact, there are examples of this both in the compact and split realization of $\mathrm{G}_2$. We use the same notation as in Section~\ref{sec:(3,6,11)-classification}. \begin{enumerate}[$\bullet$]
    \item On the split real form $\mathrm{G}_2^s$ there is the left-invariant sub-Riemannian structure that is given at the identity by
$$A_x = \begin{pmatrix} 0 & 0 & -2x^t \\  x & 0 & 0 \\ 0 & \star^{-1} x & 0 \end{pmatrix} \in \mathfrak{g}_2^s, \qquad \langle A_x, A_y \rangle = \langle x, y \rangle, \qquad x,y \in \mathbb{R}^3.$$
\item On the compact form $\mathrm{G}^c_2$ there is the left-invariant sub-Riemannian structure that is given at the identity by
$$A_x = \begin{pmatrix} 0 , x+ix\end{pmatrix} \in \mathfrak{g}^c_2, \qquad \langle A_x, A_y \rangle = \langle x, y \rangle.$$
\end{enumerate}
The actions in \eqref{Qaction1} and \eqref{Qaction2} preserves the sub-Riemannian structures when $q \in \SO(3)$, but it can not be extended to an action of $\Ort(3)$. One should expect these to be constant curvature spaces in the sense of \cite{AMS17}.
\end{remark}



\bibliographystyle{habbrv}
\bibliography{main.bib}

\begin{thebibliography}{10}

\bibitem{AMS17}
D.~{Alekseevsky}, A.~{Medvedev}, and J.~{Slovak}.
\newblock {Constant Curvature Models in Sub-Riemannian Geometry}.
\newblock {\em arXiv e-prints}, page arXiv:1712.10278, Dec. 2017, 1712.10278.

\bibitem{BoMo09}
G.~Bor and R.~Montgomery.
\newblock {$G_2$} and the rolling distribution.
\newblock {\em Enseign. Math. (2)}, 55(1-2):157--196, 2009.

\bibitem{Smoothness_of_sub_riemannian_isometries}
L.~Capogna and E.~Le~Donne.
\newblock Smoothness of sub{R}iemannian isometries.
\newblock {\em Amer. J. Math.}, 138(5):1439--1454, 2016.

\bibitem{CGJK15}
Y.~{Chitour}, E.~{Grong}, F.~{Jean}, and P.~{Kokkonen}.
\newblock {Horizontal holonomy and foliated manifolds}.
\newblock {\em To appear in: Annales de l'Instittutt Fourier, ArXiv e-prints},
  Nov. 2015, 1511.05830.

\bibitem{Chow1940}
W.-L. Chow.
\newblock {\"U}ber systeme von liearren partiellen differentialgleichungen
  erster ordnung.
\newblock {\em Mathematische Annalen}, 117(1):98--105, Dec 1940.

\bibitem{Dra18}
C.~Draper~Fontanals.
\newblock Notes on {$G_2$}: the {L}ie algebra and the {L}ie group.
\newblock {\em Differential Geom. Appl.}, 57:23--74, 2018.

\bibitem{Polish_Metric_Spaces}
S.~Gao and A.~S. Kechris.
\newblock On the classification of {P}olish metric spaces up to isometry.
\newblock {\em Mem. Amer. Math. Soc.}, 161(766):viii+78, 2003.

\bibitem{sub_Riemannian_Model_Spaces}
E.~{Grong}.
\newblock {Model spaces in sub-Riemannian geometry}.
\newblock {\em To appear in: Communications in Analysis and Geometry, ArXiv
  e-prints}, Oct. 2016, 1610.07359.

\bibitem{Kobayashi_Numisu}
S.~Kobayashi and K.~Nomizu.
\newblock {\em Foundations of differential geometry. {V}ol. {I}}.
\newblock Wiley Classics Library. John Wiley \& Sons, Inc., New York, 1996.
\newblock Reprint of the 1963 original, A Wiley-Interscience Publication.

\bibitem{Montgomery}
R.~Montgomery.
\newblock {\em A tour of subriemannian geometries, their geodesics and
  applications}, volume~91 of {\em Mathematical Surveys and Monographs}.
\newblock American Mathematical Society, Providence, RI, 2002.

\bibitem{Semi-Riemannian}
B.~O'Neill.
\newblock {\em Semi-{R}iemannian geometry}, volume 103 of {\em Pure and Applied
  Mathematics}.
\newblock Academic Press, Inc. [Harcourt Brace Jovanovich, Publishers], New
  York, 1983.
\newblock With applications to relativity.

\bibitem{Peter_Petersen}
P.~Petersen.
\newblock {\em Riemannian Geometry}.
\newblock Graduate Texts in Mathematics. Springer International Publishing, 3rd
  edition, 2016.

\bibitem{Ras38}
P.~K. Rashevski\u\i.
\newblock On the connectability of two arbitrary points of a totally
  nonholonomic space by an admissible curve.
\newblock {\em Uchen. Zap. Mosk. Ped. Inst. Ser. Fiz.-Mat. Nauk}, 3(2):83--94,
  1938.

\end{thebibliography}

\end{document}